\newcommand{\stkout}[1]{\ifmmode\text{\sout{\ensuremath{#1}}}\else\sout{#1}\fi}
\newcommand{\vb}{{\mathbf{b}}}
\newcommand{\vc}{{\mathbf{c}}}
\newcommand{\vf}{{\mathbf{f}}}
\newcommand{\vs}{{\mathbf{s}}}
\newcommand{\vx}{{\mathbf{x}}}
\newcommand{\vy}{{\mathbf{y}}}
\newcommand{\vz}{{\mathbf{z}}}
\newcommand{\A}{\mathbf{A}}
\newcommand{\V}{\mathbf{V}}
\newcommand{\X}{\mathbf{X}}
\newcommand{\Y}{\mathbf{Y}}
\newcommand{\J}{\mathbf{J}}
\newcommand{\f}{\mathbf{f}}
\newcommand{\0}{\mathbf{0}}
\newcommand{\1}{\mathbf{1}}
\newcommand{\sign}{\mathbf{sign}}
\newcommand{\bF}{\mathbf{F}}
\newcommand{\bbE}{\mathbb{E}}
\newcommand{\bbR}{\mathbb{R}}
\newcommand{\I}{\mathbf{I}}
\newcommand{\W}{\mathbf{W}}
\newcommand{\hE}{\mathcal{E}}
\newcommand{\hN}{\mathcal{N}}
\newcommand{\hG}{\mathcal{G}}
\newcommand{\dom}{{\mathrm{dom}}} % domain
\newcommand{\prox}{{\mathbf{Prox}}} % proximal map
\newcommand{\dist}{{\mathbf{dist}}} % proximal map
\DeclareMathOperator*{\argmin}{arg\,min} % argmin
\theoremstyle{plain}
\newtheorem{theorem}{Theorem}%[section]
\newtheorem{lemma}[theorem]{Lemma}
\newtheorem{corollary}[theorem]{Corollary}
\theoremstyle{definition}
\newtheorem{definition}[theorem]{Definition} 
\newtheorem{assumption}{Assumption}
\theoremstyle{remark}
\newtheorem{remark}{Remark}
\setlist{nolistsep}
\icmltitlerunning{Decentralized Proximal Stochastic Gradient Method for Nonconvex Composite Problems}
\begin{document}

\twocolumn[
\icmltitle{Compressed Decentralized Proximal Stochastic Gradient Method for Nonconvex Composite Problems with Heterogeneous Data}

% It is OKAY to include author information, even for blind
% submissions: the style file will automatically remove it for you
% unless you've provided the [accepted] option to the icml2023
% package.

% List of affiliations: The first argument should be a (short)
% identifier you will use later to specify author affiliations
% Academic affiliations should list Department, University, City, Region, Country
% Industry affiliations should list Company, City, Region, Country

% You can specify symbols, otherwise they are numbered in order.
% Ideally, you should not use this facility. Affiliations will be numbered
% in order of appearance and this is the preferred way.
\icmlsetsymbol{equal}{*}

\begin{icmlauthorlist}
\icmlauthor{Yonggui Yan}{rpi}
\icmlauthor{Jie Chen}{ibm1}
\icmlauthor{Pin-Yu Chen}{ibm2}
\icmlauthor{Xiaodong Cui}{ibm2}
\icmlauthor{Songtao  Lu}{ibm2}
\icmlauthor{Yangyang Xu}{rpi}  
%\icmlauthor{}{sch}
%\icmlauthor{}{sch}
\end{icmlauthorlist}

\icmlaffiliation{rpi}{Department of Mathematical Sciences, Rensselaer Polytechnic Institute, Troy, NY, USA}
\icmlaffiliation{ibm1}{MIT IBM-Watson AI Lab, IBM Research, Cambridge, MA, USA}
\icmlaffiliation{ibm2}{Thomas J. Watson Research Center, IBM Research, Yorktown Heights, NY, USA}

\icmlcorrespondingauthor{Yangyang Xu}{xuy21@rpi.edu}
\icmlcorrespondingauthor{Yonggui Yan}{yany4@rpi.edu}

% You may provide any keywords that you
% find helpful for describing your paper; these are used to populate
% the "keywords" metadata in the PDF but will not be shown in the document
\icmlkeywords{Machine Learning, ICML}

\vskip 0.3in
]

% this must go after the closing bracket ] following \twocolumn[ ...

% This command actually creates the footnote in the first column
% listing the affiliations and the copyright notice.
% The command takes one argument, which is text to display at the start of the footnote.
% The \icmlEqualContribution command is standard text for equal contribution.
% Remove it (just {}) if you do not need this facility.

\printAffiliationsAndNotice{}  % leave blank if no need to mention equal contribution
%\printAffiliationsAndNotice{\icmlEqualContribution} % otherwise use the standard text.

\begin{abstract} 
We first propose a decentralized proximal stochastic gradient tracking method (DProxSGT) for nonconvex stochastic composite problems, with data heterogeneously distributed on multiple workers in a decentralized connected network. To save communication cost, we then extend DProxSGT to a compressed method by compressing the communicated information. Both methods need only $\mathcal{O}(1)$ samples per worker for each proximal update, which is important to achieve good generalization performance on training deep neural networks. With a smoothness condition on the expected loss function (but not on each sample function), the proposed methods can achieve an optimal sample complexity result to produce a near-stationary point. Numerical experiments on training neural networks demonstrate the significantly better generalization performance of our methods over large-batch training methods and momentum variance-reduction methods and also, the ability of handling heterogeneous data by the gradient tracking scheme.
\end{abstract}

\section{Introduction} 
In this paper, we consider to solve nonconvex stochastic composite problems in a decentralized setting:
 \vspace{-0.1cm}
\begin{equation}\label{eq:problem_original}
 \begin{aligned}
&  \min_{\vx\in\bbR^d} \phi(\vx) = f(\vx) + r(\vx),\\[-0.1cm] 
& \text{with } f(\vx)=\frac{1}{n}\sum_{i=1}^n f_i(\vx), f_i(\vx)\!=\!\bbE_{\xi_i \sim \mathcal{D}_i}[F_i(\vx,\xi_i)].
    \end{aligned}
 \vspace{-0.1cm}   
\end{equation}
% \vspace{-0.1cm}
Here, $\{\mathcal{D}_i\}_{i=1}^n$ are possibly \emph{non-i.i.d data} distributions on $n$ machines/workers that can be viewed as nodes of a connected graph $\hG$, and each $F_i(\cdot, \xi_i)$ can only be accessed by the $i$-th worker. 
We are interested in problems %in the form of \eqref{eq:problem_original} 
that satisfy the following structural assumption. 
%\PYB{Can we change the numbering to Assumption 1, instead of 1.1? (same for other assumptions/theorems/lemmas)}
\begin{assumption}[Problem structure] \label{assu:prob}
%For the problems, 
We assume that 
\vspace{-1.5mm}
\begin{itemize}  
\item[(i)] $r$ is closed convex and possibly nondifferentiable.
\item[(ii)] Each $f_i$ is $L$-smooth in $\dom(r)$, i.e., $\|\nabla f_i(\vx) - \nabla f_i(\vy)\| \le L \|\vx- \vy\|$, for any $\vx, \vy\in\dom(r)$. 
\item[(iii)] $\phi$ is lower bounded, i.e.,  $\phi^* \triangleq \min_\vx \phi(\vx) > -\infty$.
\end{itemize}
\vspace{-2mm}
\end{assumption}

%{\color{blue}SL put the assumptions in the theoretical section. you only need this for the theoretical results rather than introduction of the algorithm. the main contribution is the nonsmooth term. focus on the nonsmooth part here. give some motivating examples, and argue why this is important and practical.}

Let $\hN=\{1, 2, \ldots, n\}$ be the set of nodes of $\hG$ and $\hE$ the set of edges. %and can only communicate along the edges $\hE$. Let 
For each $i\in\hN$, denote $\hN_i$ as the neighbors of worker $i$ and itself, i.e., $\hN_i = \{j: (i,j) \in \hE\}\cup \{i\}$. Every worker can only communicate with its neighbors. To solve \eqref{eq:problem_original} collaboratively, each worker $i$ maintains a copy, denoted as $\vx_i$, of the variable $\vx$. With these notations, 
 % 
%Define $\phi_i(\vx) = f_i(\vx) + r(\vx)$. We have  $\phi =\frac{1}{n} \sum_{i=1}^n \phi_i(\vx)$. Let every worker has  Then 
\eqref{eq:problem_original} can be formulated equivalently to %the following distributed optimization problem over the graph $\hG$:
\vspace{-0.1cm}
{\begin{align}\label{eq:decentralized_problem} 
\begin{split}
\min_{\X \in \bbR^{d\times n}}  &  \frac{1}{n}\sum_{i=1}^n \phi_i(\vx_i), \text{with }\phi_i(\vx_i) \triangleq f_i(\vx_i) + r(\vx_i), \\ % =  \frac{1}{n}\sum_{i=1}^n \left(f_i(\vx_i) + r(\vx_i) \right), \\
 \mbox{s.t.  } \quad & \vx_i=\vx_j, \forall\, j\in \hN_i, \forall\, i = 1,\ldots, n.
\end{split} 
\end{align}}
\vspace{-0.5cm}
%where $\X = [\vx_1,\vx_2,\ldots,\vx_n]\in\bbR^{d\times n}$. In most decentralized computing papers, $r(\vx)=0$ is assumed. We add a nonsmooth regularization here. Therefore, the objective is nonconvex and nonsmooth.

Problems with a \emph{nonsmooth} regularizer, i.e., in the form of \eqref{eq:problem_original}, appear in many applications such as $\ell_1$-regularized signal recovery \cite{eldar2014phase,duchi2019solving}, online nonnegative matrix factorization \cite{guan2012online}, and training sparse neural networks \cite{scardapane2017group, yang2020proxsgd}. When data involved in these applications are distributed onto (or collected by workers on) a decentralized network, it necessitates the design of decentralized algorithms. %for solving equivalent problems in the form of \eqref{eq:decentralized_problem}.

Although decentralized optimization has attracted a lot of research interests in recent years, most existing works focus on strongly convex problems \cite{scaman2017optimal,  koloskova2019decentralized} or convex problems \cite{6426375,taheri2020quantized} or smooth nonconvex problems \cite{bianchi2012convergence, di2016next, wai2017decentralized, lian2017can,zeng2018nonconvex}.
% \YX{A lot of earlier works are missed here.}
Few works have studied \emph{nonsmooth nonconvex} decentralized \emph{stochastic} optimization like \eqref{eq:decentralized_problem} that we consider. \cite{chen2021distributed, xin2021stochastic, mancino2022proximal} are among the exceptions. However, they either require to take many data samples for each update or assume a so-called mean-squared smoothness condition, which is stronger than the smoothness condition in Assumption~\ref{assu:prob}(ii), in order to perform momentum-based variance-reduction step. Though these methods can have convergence (rate) guarantee, they often yield poor generalization performance on training deep neural networks, as demonstrated in  \cite{lecun2012efficient, keskar2016large} for large-batch training methods and in our numerical experiments for momentum variance-reduction methods.
% \PYB{This reads like ours can improve generalization, but I am not sure this is what we want to say. Yes.}

On the other side, many distributed optimization methods \cite{shamir2014distributed,lian2017can,wang2018cooperative} 
% \YX{Give a few references} 
often assume that the data are i.i.d across the workers. %are independent and identically distributed (i.i.d.). But there are cases 
However, this assumption does not hold in many real-world scenarios, for instance, due to data privacy issue that local data has to stay on-premise. %In this case, local data distributions $\{\mathcal{D}_i\}_{i\in[n]}$ are heterogeneous which often gives rise to 
Data heterogeneity can result in significant degradation of the performance by these methods. %in distributed optimization. 
Though some papers do not assume i.i.d. data, they require certain data similarity, such as bounded stochastic gradients \cite{koloskova2019decentralized,koloskova2019decentralized-b, taheri2020quantized} and bounded gradient dissimilarity \cite{ tang2018communication,assran2019stochastic, tang2019deepsqueeze, vogels2020practical}.

To address the critical practical issues mentioned above, we propose a decentralized proximal stochastic gradient tracking method that needs only a single or $O(1)$ data samples (per worker) for each update. With no assumption on data similarity, it can still achieve the optimal convergence rate on solving problems satisfying conditions in Assumption~\ref{assu:prob} and yield good generalization performance. In addition, to reduce communication cost, we give a compressed version of the proposed algorithm, by performing compression on the communicated information. The compressed algorithm can inherit the benefits of its non-compressed counterpart.

\subsection{Our Contributions}
% \YX{In this section, not only mention what we do, but more importantly, emphasize the advantages of our methods, both in theory and in practice. Also, emphasize the technical challenges.}

Our contributions are three-fold. First, we propose two decentralized algorithms, one without compression (named DProxSGT) and the other with compression (named CDProxSGT), for solving \emph{decentralized nonconvex nonsmooth stochastic} problems. Different from existing methods, e.g., \cite{xin2021stochastic, wang2021distributed, mancino2022proximal}, which need a very large batchsize and/or perform momentum-based variance reduction to handle the challenge from the nonsmooth term, DProxSGT needs only $\mathcal{O}(1)$ data samples for each update, without performing variance reduction. The use of a small batch and a standard proximal gradient update enables our method to achieve significantly better generalization performance over the existing methods, as we demonstrate on training neural networks. To the best of our knowledge, CDProxSGT is the first decentralized algorithm that applies a compression scheme for solving nonconvex nonsmooth stochastic problems, and it inherits the advantages of the non-compressed method DProxSGT. Even applied to the special class of smooth nonconvex problems, CDProxSGT can perform significantly better over state-of-the-art methods, in terms of generalization and handling data heterogeneity.

Second, we establish an optimal sample complexity result of DProxSGT, which matches the lower bound result in \cite{arjevani2022lower} in terms of the dependence on a target tolerance $\epsilon$, to produce an $\epsilon$-stationary solution. Due to the coexistence of nonconvexity, nonsmoothness, big stochasticity variance (due to the small batch and no use of variance reduction for better generalization), and decentralization, the analysis is highly non-trivial. We employ the tool of Moreau envelope and construct a decreasing Lyapunov function by carefully controlling the errors introduced by stochasticity and decentralization. 

Third, we establish the iteration complexity result of the proposed compressed method CDProxSGT, which is in the same order as that for DProxSGT and thus also optimal in terms of the dependence on a target tolerance. The analysis builds on that of DProxSGT but is more challenging due to the additional compression error and the use of gradient tracking. Nevertheless, we obtain our results by making the same (or even weaker) assumptions as those assumed by state-of-the-art methods \cite{koloskova2019decentralized-b, zhao2022beer}.

\subsection{Notation}\label{sec:notation}
%\emph{Norms:} %We denote vectors as bold lowercase letters and matrices as bold uppercase letters. 
For any vector $\vx\in\bbR^{d}$, we use $\|\vx\|$ for the $\ell_2$ norm. For any matrix $\A$, $\|\A\|$ denotes the Frobenius norm and $\|\A\|_2$ the spectral norm.
%\emph{Stochastic Gradient}:  
$\X = [\vx_1,\vx_2,\ldots,\vx_n]\in\bbR^{d\times n}$ concatinates all local variables. The superscript $^t$ will be used for iteration or communication.
%We denote 
$\nabla F_i(\vx_i^t,\xi_i^t)$ denotes a local stochastic gradient of $F_i$ at $\vx_i^t$ with a random sample $\xi_i^t$. The column concatenation of $\{\nabla F_i(\vx_i^t,\xi_i^t)\}$ is denoted as 
\vspace{-0.1cm}
\begin{equation*}%\label{eq:gradient_bF} 
\nabla \bF^t = \nabla \bF(\X^t,\Xi^t) = [ \nabla F_1(\vx_1^t,\xi_1^t),\ldots, \nabla F_n(\vx_n^t,\xi_n^t)],\vspace{-0.1cm}
\end{equation*}
where  $\Xi^t = [\xi_1^t,\xi_2^t,\ldots,\xi_n^t]$.
Similarly, we denote %the column concatenation of $\nabla f_i(\vx_i^t)$ as 
\vspace{-0.1cm}
\begin{equation*} 
\nabla \f^t  %= \nabla \f(\X^t) 
= [ \nabla f_1(\vx_1^t ),\ldots, \nabla f_n(\vx_n^t )].\vspace{-0.1cm}
\end{equation*} 
%\emph{Mean}: 
For any $\X \in \bbR^{d\times n}$, %denote its mean along the second dimension, (i.e., the mean among the workers), as 
we define 
\vspace{-0.1cm}
\begin{equation*} \bar{\vx} = \textstyle\frac{1}{n}\X\1, \quad \overline{\X} = \X\J = \bar{\vx}\1^\top,\quad \X_\perp = \X(\I - \J), \vspace{-0.1cm}
\end{equation*} 
where $\1$ is the all-one vector, and $\J = \frac{\1\1^\top}{n}$ is the averaging matrix.
%Also, denote $\J = \frac{\1\1^\top}{n}$, $\overline{\X} = \X\J = \bar{\vx}\1^\top$, and  $\X_\perp = \X(\I - \J)$.
Similarly, we define the mean vectors 
\vspace{-0.1cm}
\begin{equation*} 
\overline{\nabla} \bF^t =  \textstyle\frac{1}{n} \bF^t \1,\ \overline{\nabla} \f^t = \textstyle\frac{1}{n} \f^t \1.\vspace{-0.1cm}
\end{equation*} 
%the mean of  $\nabla \bF^t$ as $\overline{\nabla} \bF^t =  n^{-1}\sum_{i=1}^n \nabla F_i(\vx_i^t,\xi_i^t) $; the mean of  $\nabla \f^t $ as $\overline{\nabla} \f^t =  n^{-1}\sum_{i=1}^n \nabla f_i(\vx_i^t)$.  \\
We will use $\bbE_t$ for the expectation about the random samples $\Xi^t$ at the $t$th iteration and $\bbE$ for the full expectation. $\bbE_Q$ denotes the expectation about a stochastic compressor $Q$.

%\emph{Expectation:} 
%The randomness of Algorithm \ref{alg:DProxSGT} comes from the samples $\{\Xi^t\}_{t\geq 0}$. 
%%\YX{Use $\backslash\mathrm{ldots}$ for $\ldots$ instead of $...$. They look different.}
%The full expectation is denoted as $\bbE$. 
%We assume $\{\xi_i^t\}_{i=1,\ldots,n; t\geq 0}$ are independent to each other. 
%The expectation about the random variables $\Xi^t$ is denoted as $\bbE_t$. %\YX{The notation $\bbE_t$ needs to be defined more clearly.}
%The expectation with respect to the stochastic compressor $Q$ as $\bbE_Q$. 

% Define the proximal mapping as 
% \[
% \prox_{\eta r } (\vx) = \argmin_{y} \{r(\vy)+\frac{1}{2\eta}\|\vy-\vx\|^2\}.
% \]

% \[
% \bar{\nabla} \bF^t, \overline{\nabla} \bF^t, \overline{\nabla \bF}^t, \overline{\nabla \bF^t}
% \]

% \PYB{Make related works a section?}
\section{Related Works}
The literature of decentralized optimization has been growing vastly. To exhaust the literature is impossible. Below we review existing works on decentralized algorithms for solving nonconvex problems, with or without using a compression technique. For ease of understanding the difference of our methods from existing ones, 
we compare to a few relevant methods in Table \ref{tab:method_compare}.

\begin{table*}[t]\label{tab:method_compare}
\caption{Comparison between our methods and some relevant methods: ProxGT-SA and ProxGT-SR-O  in \cite{xin2021stochastic}, DEEPSTORM \cite{mancino2022proximal},  ChocoSGD \cite{koloskova2019decentralized-b}, and  BEER \cite{zhao2022beer}. We %listed the non-compressed methods first then the compressed methods with 
use ``CMP'' to represent whether compression is performed by a method. 
GRADIENTS represents additional assumptions on the stochastic gradients in addition to those made in Assumption \ref{assu:stoc_grad}. 
SMOOTHNESS represents the smoothness condition, where ``mean-squared'' means $\bbE_{\xi_i}[\|\nabla F_i(\vx; \xi_i) - \nabla F_i(\vy; \xi_i)\|^2]\le L^2\|\vx-\vy\|^2$ that is stronger than the $L$-smoothness of $f_i$.
BS is the required batchsize to get an $\epsilon$-stationary solution. VR and MMT represent whether the variance reduction or momentum are used. Large batchsize and/or momentum variance reduction can degrade the generalization performance, as we demonstrate %lower down the model performance on testing data, which will be seen 
in numerical experiments. %ChocoSGD does not need the gradient tracking technique, but it further requires the stochastic gradient be bounded. 
%Our DProxSGT and CDProxSGT do not need means-squared smoothness, and no variance reduction and momentum steps is required under the $\mathcal{O}(1)$ batch size.
% \YX{Add one more column to show if data/gradient similarity or same distribution is assumed.}
}
\label{sample-table}
\begin{center}
\begin{small}
\begin{sc}
\begin{tabular}{lccccc}
\toprule
 Methods & CMP & $r\not\equiv 0$  & GRADIENTS  & SMOOTHNESS & (BS, VR, MMT)  \\
\midrule
% \hline
 ProxGT-SA & No& Yes & No & $f_i$ is smooth  &  \big($\mathcal{O}(\frac{1}{\epsilon^2})$, No , No\big) \\[0.1cm]
  ProxGT-SR-O   & No & Yes  & No &  mean-squared &  \big($\mathcal{O}(\frac{1}{\epsilon})$, Yes, No\big)  \\[0.1cm]
  DEEPSTORM  & No & Yes & No &  mean-squared & ($\mathcal{O}(1)$, Yes, Yes)  \\
  \textbf{DProxSGT (this paper)}   & No & Yes & No &  $f_i$ is smooth & ($\mathcal{O}(1)$, No, No)  \\
\midrule
% \hline
   ChocoSGD  & Yes& No & $\bbE_{\xi}[\|\nabla F_i(\vx,\xi_i)\|^2]\leq G^2$   & $f_i$ is smooth & ($\mathcal{O}(1)$, No, No)  \\
   BEER  & Yes & No & No  & $f$ is smooth &  \big($\mathcal{O}(\frac{1}{\epsilon^2})$, No, No\big) \\[0.1cm]
   \textbf{CDProxSGT (this paper)}  & Yes & Yes & No & $f_i$ is smooth & ($\mathcal{O}(1)$, No, No)  \\
\bottomrule
\end{tabular}
\end{sc}
\end{small}
\end{center}
\vskip -0.1in
\end{table*}

% \YX{Let's review the works separately for non-compressed and compressed methods.}
\subsection{Non-compressed Decentralized Methods}
% \YX{Mention works for different settings (only for nonconvex problems): (i) deterministic decentralized methods for regularized problems; (ii) stochastic decentralized methods with/without momentum variance-reduction techniques for smooth problems, i.e., $r=0$; (iii) stochastic decentralized methods with/without momentum variance-reduction techniques for regularized problems. During the review, we can mention gradient tracking that use so that we can handle heterogeneous data, without strong assumptions.}

For nonconvex decentralized problems with a nonsmooth regularizer, a lot of deterministic decentralized methods have been studied, e.g., \cite{di2016next, wai2017decentralized, zeng2018nonconvex, chen2021distributed, scutari2019distributed}.
When only stochastic gradient is available, a majority of existing works focus on smooth cases without a regularizer or a hard constraint, such as \cite{lian2017can, assran2019stochastic, tang2018d}, 
gradient tracking based methods \cite{lu2019gnsd,zhang2019decentralized, koloskova2021improved}, %are for the heterogeneous setting. 
and momentum-based variance reduction methods \cite{xin2021hybrid, zhang2021gt}. %includes some momentum variance reduction schemes. 
% These stochastic methods move model parameters alone the negative of the stochastic gradients or the tracked gradients directly in each iteration, while with the nonsmooth $r(\vx)$, our methods further need a proximal step and we use the Moreau envelop in  analysis.
Several works such as  \cite{bianchi2012convergence, wang2021distributed, xin2021stochastic, mancino2022proximal} have studied stochastic decentralized methods for problems with a nonsmooth term $r$. 
%\YX{What is the difference of these stochastic methods from ours for nonsmooth problems?}
However, they either consider some special $r$ or require a large batch size. \cite{bianchi2012convergence} considers %constrained smooth problems, i.e., 
the case where $r$ is an indicator function of a compact convex set. Also, it %does not include gradient tracking technique and 
requires bounded stochastic gradients. %are bounded in expectation. 
\cite{wang2021distributed} focuses on problems with a polyhedral $r$, and it %when the epigraph of $r$ is a polyhedral set in a special compact form, it is inspired by a proximal augmented Lagrangian method \cite{zhang2020proximal} and 
requires a large batch size of $\mathcal{O}(\frac{1}{\epsilon})$ to produce an (expected) $\epsilon$-stationary point.
\cite{xin2021stochastic, mancino2022proximal} are the most closely related to our methods. To produce an (expected) $\epsilon$-stationary point, the methods in \cite{xin2021stochastic} require a large batch size, either $\mathcal{O}(\frac{1}{\epsilon^2})$ or $\mathcal{O}(\frac{1}{\epsilon})$ if variance reduction is applied. 
The method in \cite{mancino2022proximal} requires only $\mathcal{O}(1)$ samples for each update by taking a momentum-type variance reduction scheme. However, in order to reduce variance, it needs a stronger mean-squared smoothness assumption. In addition, the momentum variance reduction step can often hurt the generalization performance on training complex neural networks, as we will demonstrate in our numerical experiments. %but includes a momentum step when updating the tracked gradients. We can see from our numerical experments that the momentum variacen-reduction techniques can lower down the model performance on testing data when the model is large.

\subsection{Compressed Distributed Methods}
% \YX{Mention compression technique first, like what it is and why it is important. Then mention works for decentralized methods with compression technique. Also discuss methods separately for different problem classes.}

Communication efficiency is a crucial factor when designing a distributed optimization strategy. The current machine learning paradigm oftentimes resorts to models with a large number of parameters, which indicates a high communication cost when the models or gradients are transferred from workers to the parameter server or among workers. This may incur significant latency in training. Hence, communication-efficient algorithms by model or gradient compression have been actively sought.

Two major groups of compression operators are quantization and sparsification. The quantization approaches include 1-bit SGD \cite{seide20141}, SignSGD \cite{bernstein2018signsgd}, QSGD \cite{alistarh2017qsgd}, TernGrad \cite{wen2017terngrad}. The sparsification approaches include Random-$k$ \cite{stich2018sparsified}, Top-$k$ \cite{aji2017sparse}, Threshold-$v$ \cite{dutta2019discrepancy} and ScaleCom \cite{chen2020scalecom}. Direct compression may slow down the convergence especially when compression ratio is high. Error compensation or error-feedback can mitigate the effect by saving the compression error in one communication step and compensating it in the next communication step before another compression \cite{seide20141}. These compression operators are first designed to compress the gradients in the centralized setting \cite{tang2019DoubleSqueeze,karimireddy2019error}.
 
The compression can also be applied to the decentralized setting for smooth problems, i.e., \eqref{eq:decentralized_problem} with $r=0$. \cite{tang2019deepsqueeze} applies the compression with error compensation to the  communication of model parameters in the decentralized seeting.
Choco-Gossip \cite{koloskova2019decentralized} is another communication way to mitigate the slow down effect from compression. It does not compress the model parameters but a residue between  model parameters and its estimation. Choco-SGD uses Choco-Gossip to solve \eqref{eq:decentralized_problem}. BEER \cite{zhao2022beer} includes gradient tracking and compresses both tracked stochastic gradients and model parameters in each iteration by the Choco-Gossip.
BEER needs a large batchsize of $\mathcal{O}(\frac{1}{\epsilon^2})$ in order to produce an $\epsilon$-stationary solution.
DoCoM-SGT\cite{DBLP:journals/corr/abs-2202-00255} does similar updates as BEER but with a momentum term for the update of the tracked gradients, and it only needs an $\mathcal{O}(1)$ batchsize. 

Our proposed CDProxSGT is for solving decentralized problems in the form of \eqref{eq:decentralized_problem} with a nonsmooth $r(\vx)$. To the best of our knowledge, CDProxSGT is the first compressed decentralized method for nonsmooth nonconvex problems without the use of a large batchsize, and it can achieve an optimal sample complexity without the assumption of data similarity or gradient boundedness. 
% \YX{Can we make such a claim?} %with nonzero $r(\vx)$ include both gradient tracking and compression. CDProxSGT does include large batchsize and momentum variance reduction techniques.  

 % {\color{blue}SL you need a table to compare these results more clearly} 

% \PYB{If we separate related works as a new section, move this part to the end of Intro}

\section{Decentralized Algorithms}\label{sec:alg}

In this section, we give our decentralized algorithms for solving \eqref{eq:decentralized_problem} or equivalently \eqref{eq:problem_original}. To perform neighbor communications, we introduce a mixing (or gossip) matrix $\W$ that satisfies the following standard assumption.
\begin{assumption}[Mixing matrix] \label{assu:mix_matrix} We choose a mixing matrix $\W$ such that
% \JC{It appears that $\W$ is assumed to be symmetric? We do not need}
\vspace{-1.5mm}
\begin{enumerate}
\item [(i)] $\W$ is doubly stochastic: $\W\1 = \1$ and $\1^\top \W = \1^\top$;
\item [(ii)] $\W_{ij} = 0$ if $i$ and $j$ are not neighbors to each other;
\item [(iii)] $\mathrm{Null}(\W-\I) = \mathrm{span}\{\1\}$ and $\rho \triangleq \|\W - \J\|_2 < 1$.
\end{enumerate}
\vspace{-2mm}
\end{assumption}
The condition in (ii) above is enforced so that \emph{direct} communications can be made only if two nodes (or workers) are immediate (or 1-hop) neighbors of each other. The condition in (iii) can hold if the graph $\hG$ is connected. The assumption $\rho < 1$ is critical to ensure contraction of consensus error. 

% \YX{Mention a few examples of $\W$ by referring to existing papers.}
The value of $\rho$ depends on the graph topology.   
\cite{koloskova2019decentralized} gives three commonly used examples: when uniform weights are used between nodes, $\W = \J$ and $\rho = 0$ for a fully-connected graph (in which case, our algorithms will reduce to centralized methods), $1-\rho = \Theta(\frac{1}{n})$ for a 2d torus grid graph where every node has 4 neighbors, and $1-\rho = \Theta(\frac{1}{n^2})$ for a ring-structured graph. %each node has two neighbors and $\rho^{-1} = \hO(n^2)$, for a 2d-torus each node has 4 neighbors and $\rho^{-1} = \hO(n)$, for a fully connected graph each node connects with all others and $\rho^{-1} = \hO(1)$. 
More examples can be found in \cite{nedic2018network}.

% The mixing matrix $\W\in  \bbR^{n\times n}$ and the identity matrix as $\I\in  \bbR^{n\times n}$. We will denote the value of $\W$ at $(j,i)$ position as $\W_{ji}$. 

% {\color{blue}i suggest to add some captions to present these details.}

\subsection{Non-compreseed Method}

With the mixing matrix $\W$, we propose a decentralized proximal stochastic gradient method with gradient tracking (DProxSGT) for \eqref{eq:decentralized_problem}. The pseudocode is shown in Algorithm~\ref{alg:DProxSGT}. 
In every iteration $t$, each node $i$ first computes a local stochastic gradient $\nabla F_i(\vx_i^{t},\xi_i^{t})$ by taking a sample $\xi_i^{t}$ from its local data distribution $\mathcal{D}_i$, then performs gradient tracking in \eqref{eq:y_half_update} and neighbor communications of the tracked gradient in \eqref{eq:y_update}, and finally takes a proximal gradient step in \eqref{eq:x_half_update} and mixes the model parameter with its neighbors in \eqref{eq:x_1_update}.

\begin{algorithm}[tb]
   \caption{DProxSGT}\label{alg:DProxSGT}
\begin{algorithmic}
   \small{ \STATE Initialize $\vx_i^{0}$ and set $\vy_i^{-1}=\0$, $\nabla F_i(\vx_i^{-1},\xi_i^{-1}) =\0$,   $\forall i\in\hN$. %=1, 2, \ldots, n$.  
   \FOR{$t=0, 1, 2, \ldots, T-1$} 
   \STATE \hspace{-0.1cm}\textbf{all} nodes $i=1, 2, \ldots, n$ do the updates \textbf{in parallel:}
   \STATE obtain one random sample $\xi_i^t$, compute a stochastic gradient $\nabla F_i(\vx_i^{t},\xi_i^{t})$, and perform 
   \vspace{-0.2cm}
       \begin{gather}
            	\vy_i^{t-\frac{1}{2}} =  \vy_i^{t-1}  +  \nabla F_i(\vx_i^{t},\xi_i^{t}) - \nabla F_i(\vx_i^{t-1},\xi_i^{t-1}),\label{eq:y_half_update}
            	\\
            	\vy_i^t =  \textstyle \sum_{j=1}^n \W_{ji} \vy_j^{t-\frac{1}{2}},\label{eq:y_update}\\
            	\vx_i^{t+\frac{1}{2}} =\prox_{\eta r} \left(\vx_i^t - \eta \vy_i^{t}\right),\label{eq:x_half_update}
            	\\
            	\vx_i^{t+1} =  \textstyle \sum_{j=1}^n \W_{ji}\vx_j^{t+\frac{1}{2}}. \label{eq:x_1_update}
          \vspace{-0.2cm}   
    	\end{gather}   
   \ENDFOR}
\end{algorithmic}
\end{algorithm}
\vspace{-0.1mm}

Note that for simplicity, we take only one random sample $\xi_i^{t}$ in Algorithm \ref{alg:DProxSGT} but in general, a mini-batch of random samples can be taken, and all theoretical results that we will establish in the next section still hold. We emphasize that we need only $\mathcal{O}(1)$ samples for each update. This is different from ProxGT-SA in \cite{xin2021stochastic}, which shares a similar update formula as our algorithm but needs a very big batch of samples, as many as $\mathcal{O}(\frac{1}{\epsilon^2})$, where $\epsilon$ is a target tolerance. A small-batch training can usually generalize better than a big-batch one \cite{lecun2012efficient, keskar2016large} on training large-scale deep learning models. Throughout the paper, we make the following standard assumption on the stochastic gradients.

%Here, $\xi_i^{t}$ denotes the random samples taken from the data on node $i$ at iteration $t$, and it
%can be a mini-batch of samples. We assume $\xi_i^t$ and the stochastic gradients $\nabla F_i(\vx_i,\xi_i)$ satisfy the following assumption.
\begin{assumption}[Stochastic gradients] \label{assu:stoc_grad}
We assume that 
\vspace{-1.5mm}
\begin{itemize}
    \item[(i)] The random samples $\{\xi_i^t\}_{i\in \hN, t\ge 0}$ are independent. %for all $i\in \hN, t\geq 0$.
    \item[(ii)] %The stochastic gradients are unbiased and have bounded variance, i.e., 
    There exists a finite number $\sigma\ge0$ such that for any $i\in \hN$ and $\vx_i\in\dom(r)$, %$\xi_i\sim \mathcal{D}_i$ and $i\in[n]$,  
    \begin{gather*}  
    \bbE_{\xi_i}[\nabla F_i(\vx_i,\xi_i)]  = \nabla f_i(\vx_i),\\ 
    \bbE_{\xi_i}[\|\nabla F_i(\vx_i,\xi_i)-\nabla f_i(\vx_i)\|^2] \leq \sigma^2.
     \end{gather*}  
\end{itemize}
\vspace{-2mm}
\end{assumption}

% \YX{Let us write the algorithm from the point view of each worker. The current updates are some equivalent writing and let's use those in the analysis. Or explain how to implement the current updates in a decentralized way. The former is better. We should also state in the algorithm to take samples on each worker. Then explain that we can take a mini-batch of samples.}

% \YX{Mention that \eqref{eq:y_half_update} is the gradient tracking step (give the reference). Emphasize that it is critical to handle heterogeneous data. With the gradient tracking, we do not need to assume strong assumptions like bounded gradient, or data similarity (give some references).}

The gradient tracking step in \eqref{eq:y_half_update} %-\eqref{eq:y_update} are 
is critical to handle heterogeneous data 
% \JC{Can we elaborate how?}
\cite{di2016next,nedic2017achieving,lu2019gnsd,pu2020distributed,sun2020improving,xin2021stochastic,song2021optimal,mancino2022proximal,zhao2022beer,DBLP:journals/corr/abs-2202-00255,song2022compressed}.
In a deterministic scenario where $\nabla f_i(\cdot)$ is used instead of $\nabla F_i(\cdot, \xi)$, for each $i$, the tracked gradient $\vy_i^t$ can converge to the gradient of the global function $\frac{1}{n}\sum_{i=1}^n f_i(\cdot)$ at $\bar\vx^t$, and thus all local updates move towards a direction to minimize the \emph{global} objective. When stochastic gradients are used, the gradient tracking can play a similar role and make $\vy_i^t$ approach to the stochastic gradient of the global function. 
%The tracked gradient $\vy_i$ estimates the average gradients over all nodes, thus directs $\vx_i$ toward the optimal model parameters of $\phi(\vx)$ in \eqref{eq:x_half_update}. If directly use $\nabla F_i(\vx_i^{t},\xi_i^{t})$ in \eqref{eq:x_half_update}, 
%even $\vx_i^t$ are identical at iteration $t$, $\vx_i^t$ may still move towards dramatically different directions indicated by local stochastic gradients, resulting in different $\vx_i^{t+1}$ for the next iteration $t+1$.
%Also theoretically, 
With this nice property of gradient tracking, we can guarantee convergence without strong assumptions that are made in existing works, such as bounded gradients  \cite{koloskova2019decentralized,koloskova2019decentralized-b, taheri2020quantized, singh2021squarm}  and bounded data similarity over nodes \cite{lian2017can, tang2018communication, tang2019deepsqueeze, vogels2020practical, wang2021error}.

\subsection{Compressed Method}
% \YX{Explain what we need to communicate in Algorithm 1. Then say communications are becoming bottleneck for distributed training by GPUs, so in order to save communication cost, we further propose a compressed algorithm.}
In DProxSGT, each worker needs to communicate both the model parameter and tracked stochastic gradient with its neighbors at every iteration. Communications have become a bottleneck for distributed training on GPUs. In order to save the communication cost, we further propose a compressed version of DProxSGT, named CDProxSGT. The pseudocode is shown in Algorithm \ref{alg:CDProxSGT}, where $Q_\vx$ and $Q_\vy$ are two compression operators.

\begin{algorithm}[tb]
\caption{CDProxSGT}\label{alg:CDProxSGT}
\begin{algorithmic}
  \small{\STATE Initialize  $\vx_i^{0}$; set 
  $\vy_i^{-1}=\underline\vy_i^{-1}=\nabla F_i(\vx_i^{-1}, \xi_i^{-1})=\underline\vx_i^{0} =\0$, $\forall i\in\hN$. %=1, 2, \ldots, n$.
  \FOR{$t=0, 1, 2, \ldots, T-1$}
     \STATE \hspace{-0.1cm}\textbf{all} nodes $i=1, 2, \ldots, n$ do the updates \textbf{in parallel:}
     \vspace{-0.2cm}
         \begin{gather}
    	\vy_i^{t-\frac{1}{2}} =  \vy_i^{t-1} + \nabla F_i(\vx_i^{t},\xi_i^{t}) - \nabla F_i(\vx_i^{t-1},\xi_i^{t-1}),\label{eq:alg3_1}\\
        \underline\vy_i^{t} = \underline\vy_i^{t-1} + Q_\vy\big[\vy_i^{t-\frac{1}{2}} - \underline\vy_i^{t-1}\big], \label{eq:alg3_2}\\
        \vy_i^{t} = \vy_i^{t-\frac{1}{2}} +\gamma_y  \left(\textstyle \sum_{j=1}^n \W_{ji} \underline\vy_j^{t}-\underline\vy_i^{t}\right), \label{eq:alg3_3}\\
    	\vx_i^{t+\frac{1}{2}} =\prox_{\eta r} \left(\vx_i^t - \eta \vy_i^{t}\right), \label{eq:alg3_4}\\
    	\underline\vx_i^{t+1} = \underline\vx_i^{t} + Q_\vx\big[\vx_i^{t+\frac{1}{2}} - \underline\vx_i^{t}\big], \label{eq:alg3_5}\\
    	\vx_i^{t+1} = \vx_i^{t+\frac{1}{2}}+\gamma_x\Big(\textstyle  \overset{n}{\underset{j=1}\sum} \W_{ji} \underline\vx_j^{t+1}-\underline\vx_i^{t+1}\Big).\label{eq:alg3_6}
     \vspace{-0.2cm}
     \end{gather}    
   \ENDFOR}
\end{algorithmic}
\end{algorithm}
\vspace{-0.1mm}

% \YX{The given update does not look to have low communication cost. Explain the equivalent implementation form that only communicates the compressed vectors.}
In Algorithm \ref{alg:CDProxSGT}, each node communicates the non-compressed vectors $\underline\vy_i^{t}$ and $\underline\vx_i^{t+1}$  with its neighbors in \eqref{eq:alg3_3} and \eqref{eq:alg3_6}. We write it in this way for ease of read and analysis. For efficient and \emph{equivalent} implementation, %the nodes 
we do not communicate 
$\underline\vy_i^{t}$ and $\underline\vx_i^{t+1}$ directly but the compressed residues  $Q_\vy\big[\vy_i^{t-\frac{1}{2}} - \underline\vy_i^{t-1}\big]$ and $Q_\vx\big[\vx_i^{t+\frac{1}{2}} - \underline\vx_i^{t}\big]$, explained as follows. 
Besides $\vy_i^{t-1}$, $\vx_i^{t}$, $\underline\vy_i^{t-1}$ and $\underline\vx_i^{t}$, each node also stores $\vz_i^{t-1}$ and $\vs_i^{t}  $ which record $\sum_{j=1}^n \W_{ji} \underline\vy_i^{t-1}$ and $\sum_{j=1}^n \W_{ji} \underline\vx_i^{t}$. For the gradient communication, each node $i$ initializes $\vz_i^{-1} = \0$, and then at each iteration $t$, 
after receiving $Q_\vy\big[\vy_j^{t-\frac{1}{2}} - \underline\vy_j^{t-1}\big]$ from its neighbors, it updates $\underline\vy_i^{t}$ by \eqref{eq:alg3_2}, and $\vz_i^{t}$ and $\vy_i^t$ by 
\vspace{-0.2cm} 
\begin{align*}
  \vz_i^{t} =&~ \textstyle \vz_i^{t-1} + \sum_{j=1}^n \W_{ji} Q_\vy\big[\vy_j^{t-\frac{1}{2}} - \underline\vy_j^{t-1}\big], \\ %\label{eq:z_update}\\
\vy_i^{t} =&~ \textstyle \vy_i^{t-\frac{1}{2}} +\gamma_y \big(\vz_i^{t}-\underline\vy_i^{t}\big).\vspace{-0.2cm}
\end{align*}
From the initialization and the updates of $\underline\vy_i^{t}$ and $\vz_i^{t}$, 
%\eqref{eq:alg3_2} and \eqref{eq:z_update}, 
it always holds that
$\vz_i^{t}=\sum_{j=1}^n \W_{ji} \underline\vy_i^{t}$.
The model communication can be done efficiently in the same way. %similar. 

The compression operators $Q_\vx$ and $Q_\vy$ in Algorithm \ref{alg:CDProxSGT} can be different, but we assume that they both satisfy the following assumption. %\YX{Do we need $\alpha < 1$.}
\begin{assumption} \label{assu:compressor}
There exists $\alpha \in [0,1)$ such that  
$$\bbE[\|\vx-Q[\vx]\|^2]\leq \alpha^2 \|\vx\|^2, \forall\, \vx\in\bbR^d,$$ 
for both $Q=Q_\vx$ and $Q=Q_\vy$.
\end{assumption}
% \JC{Is the above assumption standard? If yes, indicate so. If not, discuss it.}
The assumption on compression operators is standard and also made in \cite{koloskova2019decentralized-b,koloskova2019decentralized,zhao2022beer}. It is satisfied by the sparsification, such as Random-$k$ \cite{stich2018sparsified} and Top-$k$ \cite{aji2017sparse}. It can also be satisfied by rescaled quantizations.  For example, QSGD \cite{alistarh2017qsgd} compresses $\vx\in \bbR^d$ by $Q_{sqgd}(\vx) = \frac{\sign(\vx)\|\vx\|}{s} \lfloor s \frac{|\vx|}{\|\vx\|} + \xi \rfloor
$
where $\xi$ is uniformly distributed on $[0,1]^d$, $s$ is the parameter about compression level. Then $Q(\vx)= \frac{1}{\tau} Q_{sqgd} (\vx)$ with $\tau=(1+\min\{d/s^2, \sqrt{d}/s\})$ satisfies Assumption \ref{assu:compressor} with $\alpha^2=1-\frac{1}{\tau}$. More examples can be found in \cite{koloskova2019decentralized}.

Below, we make a couple of remarks to discuss the relations between Algorithm \ref{alg:DProxSGT} and Algorithm \ref{alg:CDProxSGT}.

% \YX{After Algorithm 2, mention that with some special choice of parameters, it will reduce to Algorithm 1. However, we will analyze them separately because ... Also, emphasize that both algorithms are new for the considered problems and have not been studied in the literature.}
\begin{remark}
When $Q_\vx$ and $Q_\vy$ are both identity operators, i.e., $Q_\vx[\vx] = \vx, Q_\vy[\vy] = \vy$, and $\gamma_x=\gamma_y=1$, in Algorithm \ref{alg:CDProxSGT}, CDProxSGT will reduce to DProxSGT. Hence, the latter can be viewed as a special case of the former. However, we will analyze them separately. Although the big-batch training method ProxGT-SA in \cite{xin2021stochastic} shares a similar update as the proposed DProxSGT, our analysis will be completely different and new, as we need only $\mathcal{O}(1)$ samples in each iteration in order to achieve better generalization performance. The analysis of CDProxSGT will be built on that of DProxSGT by carefully controlling the variance error of stochastic gradients and the consensus error, as well as the additional compression error. %Although DProxSGT is similar to ProxGT-SA \cite{xin2021stochastic},  we do not need take $\mathcal{O}(\frac{1}{\epsilon^2})$ samples but only $\mathcal{O}(1)$ samples in each iteration. 
%Thus both DProxSGT and CDProxSGT are new for the considered problems and have not been studied in the literature.
\end{remark}

% \begin{remark}
% In Algorithms \ref{alg:DProxSGT} and \ref{alg:CDProxSGT}, we use the matrix notation. With $n$ workers,  $\X^t\in \bbR^{d\times n}$ and $\Y^t\in \bbR^{d\times n}$ are column concatenation of $n$ local variable $\vx_i^t\in\bbR^{d}$ and $\vy_i^t\in\bbR^{d}$. $\overline\X^t\in \bbR^{d\times n}$ and $\overline\Y^t\in \bbR^{d\times n}$ are column concatenation of $\overline\vx_i^t\in\bbR^{d}$ and $\overline\vy_i^t\in\bbR^{d}$.
% $\nabla \bF^t\in \bbR^{d\times n}$ are column concatenation of $n$  stochastic gradient $\nabla F_i(\vx_i^t,\xi_i^t)\in\bbR^{d}$.  
% In \eqref{eq:x_half_update}, $\prox_{\eta r}$ is applied  to the column concatenated matrix $\X^t - \eta \Y^{t}$, which means applying it to each column separately, i.e.,  \eqref{eq:x_half_update} euqals to $\vx^{t+\frac{1}{2}}_i = \prox_{\eta r}(\vx^t_i-\eta\vy_i^t)$ for $i=1,2,\ldots, n$.
% \end{remark}

\begin{remark}
When $Q_\vy$ and $Q_\vx$ are identity operators, $\underline\vy_i^{t} = \vy_i^{t-\frac{1}{2}}$ and $\underline\vx_i^{t+1} = \vx_i^{t+\frac{1}{2}}$ for each $i\in\hN$. Hence, in the compression case, $\underline\vy_i^{t}$ and $\underline\vx_i^{t+1}$ can be viewed as estimates of  $\vy_i^{t-\frac{1}{2}}$ and $\vx_i^{t+\frac{1}{2}}$. %Note $\underline\Y^{t}$ and $\underline\X^{t}$  estimate $\Y^{t-\frac{1}{2}}$ and $\X^{t-\frac{1}{2}}$, respectively. 
In addition, in a matrix format, we have from \eqref{eq:alg3_3} and \eqref{eq:alg3_6} that
 \begin{align} 
    \Y^{t+1} %=&~ \Y^{t+\frac{1}{2}} + \gamma_y \underline\Y^{t+1}(\W-\I) \nonumber \\
   % =&~ \Y^{t+\frac{1}{2}}\big(\gamma_y \W + (1-\gamma_y)\I\big) + \gamma_y\big(\underline\Y^{t+1}-\Y^{t+\frac{1}{2}}\big)(\W-\I) \nonumber \\
     =&~ \Y^{t+\frac{1}{2}}\widehat\W_y + \gamma_y\big(\underline\Y^{t+1}-\Y^{t+\frac{1}{2}}\big)(\W-\I), \label{eq:Y_hatW}\\
    \X^{t+1} %=&~ \X^{t+\frac{1}{2}} + \gamma_x \underline\X^{t+1}(\W-\I) \nonumber \\
   % =&~ \X^{t+\frac{1}{2}}\big(\gamma_x \W + (1-\gamma_x)\I\big) + \gamma_x\big(\underline\X^{t+1}-\X^{t+\frac{1}{2}}\big)(\W-\I) \nonumber \\
    =&~ \X^{t+\frac{1}{2}}\widehat\W_x + \gamma_x(\underline\X^{t+1}-\X^{t+\frac{1}{2}})(\W-\I), \label{eq:compX_hatW}
\end{align}
where 
$\widehat\W_y = \gamma_y \W + (1-\gamma_y)\I,\ \widehat\W_x = \gamma_x \W + (1-\gamma_x)\I.$
When $\W$ satisfies the conditions (i)-(iii) in Assumption~\ref{assu:mix_matrix}, it can be easily shown that $\widehat\W_y$ and $\widehat\W_x$ also satisfy all three conditions. Indeed, we have
$$\widehat\rho_x \triangleq \|\widehat\W_x - \J\|_2 < 1,\quad 
\widehat\rho_y \triangleq \|\widehat\W_y - \J\|_2 < 1.$$
Thus we can view $\Y^{t+1}$ and $\X^{t+1}$ as the results of $\Y^{t+\frac{1}{2}}$ and $\X^{t+\frac{1}{2}}$ by one round of neighbor communication with mixing matrices $\widehat{\W}_y$ and $\widehat{\W}_x$, and the addition of the estimation error $\underline\Y^{t+1}-\Y^{t+\frac{1}{2}}$ and $\underline\X^{t+1}-\X^{t+\frac{1}{2}}$ after one round of neighbor communication.
\end{remark}
%
%\begin{remark}
%In \eqref{eq:Y_hatW} and \eqref{eq:compX_hatW}, $\widehat\W_y$ and $\widehat\W_x$ are two mixing matrices. Define 
% 
%\end{remark}

% \YX{Let's put two algorithms in one section and then convergence results with discussions/explanation on the key analysis steps in another section. State the assumptions on the stochastic gradients and compression operators in the beginning of the analysis section or after giving the algorithms.}

% In section \ref{sec:DProxSGT}, we introduce the decentralized proximal stochastic gradient decent method with gradient tracking (DProxSGT). Then in section \ref{sec:CDProxSGT}, we propose the compressed method by compressing both models and gradients in DProxSGT and name the new method as CDProxSGT.

\section{Convergence Analysis}

% \YX{Define the Moreau envelope in the analysis section and explain why it can be used as a valid stationarity measure.}
In this section, we analyze the convergence of the algorithms proposed in section~\ref{sec:alg}. Nonconvexity of the problem and stochasticity of the algorithms both raise difficulty on the analysis. In addition, the coexistence of the nonsmooth regularizer $r(\cdot)$ causes more significant challenges.  
To address these challenges, we employ a tool of the so-called Moreau envelope \cite{MR201952}, which has been commonly used for analyzing methods on solving nonsmooth weakly-convex problems.

\begin{definition}[Moreau envelope] Let $\psi$ be an $L$-weakly convex function, i.e., $\psi(\cdot) + \frac{L}{2}\|\cdot\|^2$ is convex. For $\lambda\in(0,\frac{1}{L})$, the Moreau envelope of $\psi$ is defined as
\vspace{-0.2cm}
 \begin{equation*} 
\psi_\lambda(\vx) = \min_\vy \textstyle \left\{\psi(\vy) + \frac{1}{2\lambda}\|\vy-\vx\|^2\right\},  \vspace{-0.2cm}
\end{equation*} 
and the unique minimizer is denoted as
\vspace{-0.2cm}
\begin{equation*}
 \prox_{\lambda \psi}(\vx)= \argmin_{\vy} \textstyle \left\{\psi(\vy)+\frac{1}{2\lambda} \|\vy-\vx\|^2\right\}.\vspace{-0.2cm}
 \end{equation*} 
\end{definition}

%\begin{remark} \label{property:prox}
% When $\lambda \in (0, \frac{1}{L})$, $\phi(\vy) + \frac{1}{2\lambda}\|\vy-\vx\|^2$ is strongly convex with respect to $\vy$ and the minimizer in the Moreau envelope is unique. Denote the minimizer as 
% \begin{align*}
% \prox_{\lambda \phi}(\vx):= \argmin_{\vy} \phi(\vy)+\frac{1}{2\lambda} \|\vy-\vx\|^2.
% \end{align*}
% Furthermore,  we have $\widehat{\vx}=\prox_{\lambda \phi}(\vx)$ if and only if 
% $0 \in \partial \phi(\widehat\vx) + \frac{1}{\lambda}(\widehat\vx-\vx)$.
%\end{remark}

%In the problem \eqref{eq:problem_original}, $\phi$ is nonsmooth, but its Moreau envelope $\phi_\lambda$ is smooth for any $\lambda\in (0, \frac{1}{L})$, because $\phi$ is $L$-weakly convex. In addition, the gradient norm $\|\nabla \phi_\lambda(\vx)\|$ can be used as a valid measure of stationarity violation at $\vx$ for $\phi$, as shown in 
The Moreau envelope $\psi_\lambda$ has nice properties.
The result below can be found in  
  \cite{davis2019stochastic, nazari2020adaptive, xu2022distributed-SsGM}.
 % \cite{davis2019stochastic}, Lemma 8 in \cite{nazari2020adaptive}, Lemma 2.4 in  \cite{xu2022distributed-SsGM}. \YX{The credit for this result should be given to an earlier paper. And this does not sound a lemma but a discussion.}
 \begin{lemma}\label{lem:xhat_x}
 For any function $\psi$, if it is $L$-weakly convex, then for any $\lambda \in (0, \frac{1}{L})$,  the Moreau envelope $\psi_\lambda$ is smooth with gradient given by 
$\nabla \psi_\lambda (\vx) = \lambda^{-1} (\vx-\widehat\vx),$
 where $\widehat\vx=\prox_{\lambda\psi}(\vx)$. 
Moreover, \vspace{-0.2cm}
\[
\|\vx-\widehat\vx\|=\lambda\|\nabla \psi_\lambda(\vx)\|, \quad %\phi(\widehat\vx)\leq\phi(\vx), \quad 
\dist(\0, \partial \psi(\widehat \vx))\leq \|\nabla \psi_\lambda(\vx)\|.\vspace{-0.2cm}
\]
\end{lemma}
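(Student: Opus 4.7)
My plan is to establish the three claims in sequence: (a) well-posedness of the proximal map and hence of $\psi_\lambda$; (b) the gradient formula $\nabla\psi_\lambda(\vx)=\lambda^{-1}(\vx-\widehat\vx)$; (c) the two ``moreover'' identities, which then follow in one line each.

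First I would check that for $\lambda\in(0,1/L)$ and every fixed $\vx$, the map $\vy\mapsto \psi(\vy)+\frac{1}{2\lambda}\|\vy-\vx\|^2$ is strongly convex. Since $\psi$ is $L$-weakly convex, $\psi(\cdot)+\tfrac{L}{2}\|\cdot\|^2$ is convex, and $\frac{1}{2\lambda}\|\vy-\vx\|^2-\tfrac{L}{2}\|\vy\|^2$ is a quadratic with positive-definite Hessian $(\frac{1}{\lambda}-L)\I$. Strong convexity implies a unique minimizer, so $\widehat\vx=\prox_{\lambda\psi}(\vx)$ is well defined. From the first-order optimality condition $\0\in\partial\psi(\widehat\vx)+\frac{1}{\lambda}(\widehat\vx-\vx)$, I extract the key inclusion
\[
\frac{\vx-\widehat\vx}{\lambda}\in\partial\psi(\widehat\vx),
\]
which will do double duty later.

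Next, to derive the gradient formula, I would argue that $\psi_\lambda$ is the infimal convolution of the weakly convex function $\psi$ with the smooth quadratic $\frac{1}{2\lambda}\|\cdot\|^2$, and use a standard envelope-style argument. Concretely, for any $\vx,\vu$, using $\widehat\vx$ as a feasible point in the definition of $\psi_\lambda(\vu)$ and $\widehat\vu=\prox_{\lambda\psi}(\vu)$ as a feasible point in the definition of $\psi_\lambda(\vx)$, I obtain two sandwich inequalities; subtracting and expanding the quadratics yields
\[
\psi_\lambda(\vu)-\psi_\lambda(\vx)=\tfrac{1}{\lambda}\langle \vx-\widehat\vx,\vu-\vx\rangle + o(\|\vu-\vx\|),
\]
which identifies $\nabla\psi_\lambda(\vx)=\lambda^{-1}(\vx-\widehat\vx)$. (Alternatively, one can invoke Danskin/envelope theorems for strongly convex inner problems; the ``from scratch'' version is cleaner here because it also yields local Lipschitzness of $\widehat\vx$ in $\vx$, which upgrades differentiability to smoothness.)

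Finally, the two moreover statements are immediate: taking norms in the gradient formula gives $\|\vx-\widehat\vx\|=\lambda\|\nabla\psi_\lambda(\vx)\|$, and substituting the inclusion $\lambda^{-1}(\vx-\widehat\vx)\in\partial\psi(\widehat\vx)$ into the definition of $\dist$ yields
\[
\dist(\0,\partial\psi(\widehat\vx))\le\Big\|\tfrac{\vx-\widehat\vx}{\lambda}\Big\|=\|\nabla\psi_\lambda(\vx)\|.
\]
The only step I expect to require care is (b), specifically justifying that the two-sided sandwich actually gives a Fr\'echet derivative rather than just a directional one; the remedy is to note that $\vu\mapsto\widehat\vu$ is $(1-\lambda L)^{-1}$-Lipschitz (a direct consequence of strong convexity of the inner problem with modulus $\tfrac{1}{\lambda}-L$), so the error term in the sandwich is genuinely $o(\|\vu-\vx\|)$ uniformly in the direction.
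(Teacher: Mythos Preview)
Your proof is correct. Note, however, that the paper does not actually prove this lemma: it simply states the result and cites \cite{davis2019stochastic, nazari2020adaptive, xu2022distributed-SsGM} for it. So there is no ``paper's proof'' to compare against.

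Your self-contained argument is essentially the standard one found in those references. The well-posedness via $(\tfrac{1}{\lambda}-L)$-strong convexity, the optimality inclusion $\lambda^{-1}(\vx-\widehat\vx)\in\partial\psi(\widehat\vx)$, and the two-sided sandwich for the gradient formula are exactly how, e.g., Davis--Drusvyatskiy proceed. Your observation that the Lipschitzness of $\vx\mapsto\widehat\vx$ with constant $(1-\lambda L)^{-1}$ upgrades the sandwich to a genuine Fr\'echet derivative (and in fact gives $C^{1,1}$ smoothness of $\psi_\lambda$) is also correct; the paper records this Lipschitz bound separately as Lemma~\ref{lem:prox_diff}. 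The ``moreover'' claims are, as you note, one-liners from the gradient formula and the optimality inclusion.
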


Lemma~\ref{lem:xhat_x}  implies that if $\|\nabla \psi_\lambda(\vx)\|$ is small, then $\widehat\vx$ is a near-stationary point of $\psi$ and $\vx$ is close to $\widehat\vx$. Hence, $\|\nabla \psi_\lambda(\vx)\|$ can be used as a valid measure of stationarity violation at $\vx$ for $\psi$. Based on this observation, we define the $\epsilon$-stationary solution below for the decentralized problem \eqref{eq:decentralized_problem}.

%Let $\widehat{\vx}=\prox_{\lambda \phi}(\vx)$. Then it holds \cite{?} $\nabla \phi_\lambda = \frac{1}{\lambda}(\vx-\widehat\vx)$.
% From the optimality condition of $\widehat\vx$, one has 
%\[
%\textstyle
%\0\in \partial \phi(\widehat\vx) + \frac{1}{\lambda}(\widehat\vx-\vx) \mbox{ i.e. } \dist(\0, \partial \phi(\widehat\vx)) \leq \frac{1}{\lambda}\|\widehat\vx-\vx\|.
%\]
%Therefor, if $\frac{1}{\lambda}\|\widehat\vx-\vx\|\leq \epsilon$, then $\widehat\vx$ is $\epsilon$-stationary and $\vx$ is close to the $\epsilon$-stationary point  $\widehat\vx$.

% \YX{Give the definition of the $\epsilon$-solution we aim at.}
\begin{definition}[Expected $\epsilon$-stationary solution]\label{def:eps-sol} Let $\epsilon > 0$. A point $\X = [\vx_1, \ldots, \vx_n]$ is called an expected $\epsilon$-stationary solution of \eqref{eq:decentralized_problem} if for a constant $\lambda\in (0, \frac{1}{L})$,
    \vspace{-0.1cm}
\begin{equation*}
    \textstyle\frac{1}{n} \bbE\left[\sum_{i=1}^n \|\nabla \phi_\lambda(\vx_i)\|^2 + L^2 \|\X_\perp\|^2\right] \leq \epsilon^2.
%where $\widehat\X = [\widehat\vx_1, \ldots, \widehat\vx_n]$ with $\widehat{\vx}_i \triangleq \prox_{\lambda \phi}(\vx_i), \forall\, i\in\hN$.
    \vspace{-0.1cm}
\end{equation*}
\end{definition}  

In the definition above, $L^2$ before the consensus error term $\|\X_\perp\|^2$ is to balance the two terms. This scaling scheme has also been used in existing works such as \cite{xin2021stochastic,mancino2022proximal,DBLP:journals/corr/abs-2202-00255} . From the definition, we see that if $\X$ is an expected $\epsilon$-stationary solution of \eqref{eq:decentralized_problem}, then each local solution $\vx_i$ will be a near-stationary solution of $\phi$ and in addition, these local solutions are all close to each other, namely, they are near consensus.

% \YX{Revise the complexity results to give an expected $\epsilon$-stationary solution. Among the $T$ generated iterates, pick an iterate $\X^\tau$ uniformly at random.}

Below we first state the convergence results of the non-compressed method DProxSGT %in 
%Algorithm~\ref{alg:DProxSGT} 
and then the compressed one CDProxSGT. %in Algorithm~\ref{alg:CDProxSGT}. 
All the proofs are given in the appendix.
 
\begin{theorem}[Convergence rate of DProxSGT]\label{thm:sec2}
Under Assumptions \ref{assu:prob} -- \ref{assu:stoc_grad}, let $\{\X^t\}$ be generated from $\mathrm{DProxSGT}$ in Algorithm~\ref{alg:DProxSGT} with $\vx_i^0 = \vx^0, \forall\, i \in \hN$. Let  $\lambda = \min\big\{\frac{1}{4 L}, \frac{1}{96\rho L}\big\}$ and $\eta\leq  \min\big\{\frac{1}{4 L},\frac{(1-\rho^2)^4}{96\rho  L}\big\}$. 
Select $\tau$ from $\{0, 1, \ldots, T-1\}$ uniformly at random.
Then 
\vspace{-0.1cm}
\begin{equation*}%\label{eq:them_DProxSGT}
\begin{aligned} 
   &~ \textstyle \frac{1}{n} \bbE\left[\sum_{i=1}^n \|\nabla\phi_\lambda(\vx_i^\tau)\|^2 +\frac{4}{\lambda \eta} \|\X^\tau_\perp\|^2\right]  \\
   \leq &~ \textstyle \frac{8\left( \phi_\lambda(\vx^0) - \phi_\lambda^*\right)}{ \eta T}  + \frac{4616 \eta}{\lambda(1-\rho^2)^3} \sigma^2 \textstyle  + \frac{768\eta  \bbE\left[ \|\nabla \bF^0(\I-\J)\|^2\right]}{n\lambda T(1-\rho^2)^3},
\end{aligned}
\vspace{-0.1cm}
\end{equation*}
where $\phi_\lambda^* = \min_{\vx} \phi_\lambda(\vx)> -\infty$. %\YX{The lower bound cannot be $-\infty$.}
\end{theorem}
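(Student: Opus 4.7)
The plan is to take $\phi_\lambda(\bar\vx^t)$ (the Moreau envelope of $\phi$ at the network mean $\bar\vx^t = \tfrac{1}{n}\X^t\1$) as the main potential, and couple it with the two consensus errors $\|\X_\perp^t\|^2$ and $\|\Y_\perp^t\|^2 := \|\Y^t(\I-\J)\|^2$ to form a single Lyapunov function $\mathcal{L}_t$. A critical structural identity is that the initialization $\vy_i^{-1}=\0$ and $\nabla F_i(\vx_i^{-1},\xi_i^{-1})=\0$, together with $\W$ being doubly stochastic, preserves the gradient tracking invariant $\bar\vy^t = \overline{\nabla \bF}^t$ for every $t \ge 0$, so the average tracked direction is unbiased for $\tfrac{1}{n}\sum_i \nabla f_i(\vx_i^t)$ with variance at most $\sigma^2/n$. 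The initialization $\vx_i^0 = \vx^0$ also guarantees $\|\X_\perp^0\|^2 = 0$, simplifying the telescoped terms.

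For the envelope descent at the mean, I would let $\widehat\vx^t = \prox_{\lambda\phi}(\bar\vx^t)$ and start from the definitional inequality
\[
\phi_\lambda(\bar\vx^{t+1}) - \phi_\lambda(\bar\vx^t) \le \tfrac{1}{\lambda}\langle \widehat\vx^t - \bar\vx^t, \bar\vx^t - \bar\vx^{t+1}\rangle + \tfrac{1}{2\lambda}\|\bar\vx^{t+1} - \bar\vx^t\|^2.
\]
Using $\bar\vx^{t+1} - \bar\vx^t = \tfrac{1}{n}\sum_i (\vx_i^{t+\frac12} - \vx_i^t)$ (by $\W\1 = \1$), the per-node prox inequality, $L$-weak convexity of $\phi$ at $\widehat\vx^t$, $L$-smoothness of each $f_i$ to swap $\vy_i^t$ for $\nabla f_i(\vx_i^t)$ up to $\|\Y_\perp^t\|^2/n$, and the conditional expectation $\bbE_t$ to swap $\nabla F_i(\vx_i^t,\xi_i^t)$ for $\nabla f_i(\vx_i^t)$ up to $\sigma^2/n$, I expect a descent inequality of the form
\[
\bbE_t\phi_\lambda(\bar\vx^{t+1}) \le \phi_\lambda(\bar\vx^t) - c_1\eta\|\nabla\phi_\lambda(\bar\vx^t)\|^2 + c_2\tfrac{\eta L^2}{n}\|\X_\perp^t\|^2 + c_3\tfrac{\eta}{n}\|\Y_\perp^t\|^2 + c_4\tfrac{\eta^2\sigma^2}{n}.
\]
For the consensus recursions, the identity $\overline\X^t(\W-\J) = 0$ combined with $\|\W-\J\|_2 = \rho$ and Young's inequality yields $\bbE_t\|\X_\perp^{t+1}\|^2 \le \tfrac{1+\rho^2}{2}\|\X_\perp^t\|^2 + \tfrac{c_5\eta^2}{1-\rho^2}\|\Y_\perp^t\|^2$, and the tracking update $\Y^{t+1} = (\Y^t + \nabla\bF^{t+1}-\nabla\bF^t)\W$ gives $\bbE_t\|\Y_\perp^{t+1}\|^2 \le \tfrac{1+\rho^2}{2}\|\Y_\perp^t\|^2 + \tfrac{c_6 L^2}{1-\rho^2}\bbE_t\|\X^{t+1}-\X^t\|^2 + c_7 n\sigma^2$, with $\|\X^{t+1}-\X^t\|^2$ further reduced to $\eta^2$-scaled quantities via the prox step.

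I would then form $\mathcal{L}_t = \phi_\lambda(\bar\vx^t) + \tfrac{a}{n}\|\X_\perp^t\|^2 + \tfrac{b}{n}\|\Y_\perp^t\|^2$ with weights $a,b$ of order $(1-\rho^2)^{-k}/(\lambda\eta)$ (small $k$), chosen so the positive $\|\X_\perp^t\|^2$ and $\|\Y_\perp^t\|^2$ contributions from the descent step are absorbed by the $-\tfrac{1-\rho^2}{2}$ drifts in the consensus recursions. The stepsize condition $\eta \le (1-\rho^2)^4/(96\rho L)$ is exactly what closes this coupled system. Telescoping $\bbE\mathcal{L}_{t+1} - \mathcal{L}_t$ over $t=0,\ldots,T-1$, using $\phi_\lambda(\bar\vx^0) \le \phi(\vx^0)$ from $\vx_i^0 = \vx^0$, $\phi_\lambda \ge \phi^*$, and $\|\Y_\perp^0\|^2 \le \|\nabla\bF^0(\I-\J)\|^2$ (from the initialization and a single mix), and converting the mean-point quantity $\|\nabla\phi_\lambda(\bar\vx^\tau)\|^2$ into the per-node sum $\tfrac{1}{n}\sum_i\|\nabla\phi_\lambda(\vx_i^\tau)\|^2$ via Lipschitzness of $\nabla\phi_\lambda$ (the extra $\|\X_\perp^\tau\|^2$ term being absorbed into the $\tfrac{4}{\lambda\eta}\|\X_\perp^\tau\|^2$ slot in the claim), should yield the stated bound for uniformly sampled $\tau$.

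The hard part is controlling $\|\Y_\perp^t\|^2$: with only $\mathcal{O}(1)$ samples and no variance reduction, each tracking update injects a fresh $\mathcal{O}(n\sigma^2)$, so $\bbE\|\Y_\perp^t\|^2$ cannot decay to zero. The Lyapunov weights $a,b$ must be tuned just tightly enough that this persistent noise surfaces only as the $\mathcal{O}(\eta\sigma^2/(\lambda(1-\rho^2)^3))$ term in the final bound rather than being amplified by $T$. This requires careful accounting of how the $\rho^2$-contraction, the $\eta$ from the prox step, and the constant $L$ compound in the coupled $(\X_\perp,\Y_\perp)$ recursion; the aggressive $(1-\rho^2)^4$ factor in the admissible stepsize is precisely the price of running without variance reduction while still landing the $\sigma^2$ term at the optimal order.
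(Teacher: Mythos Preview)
Your plan is workable in spirit but takes a genuinely different route from the paper. You track the Moreau envelope at the \emph{network mean}, $\phi_\lambda(\bar\vx^t)$ with $\widehat\vx^t=\prox_{\lambda\phi}(\bar\vx^t)$, and intend to convert to the per-node sum $\tfrac{1}{n}\sum_i\|\nabla\phi_\lambda(\vx_i^\tau)\|^2$ at the end via Lipschitzness of $\nabla\phi_\lambda$. The paper instead tracks the \emph{per-node} envelope sum $\sum_{i=1}^n\phi_\lambda(\vx_i^t)$ directly, with $\widehat\vx_i^t=\prox_{\lambda\phi}(\vx_i^t)$ defined at each node. Its descent step (Lemma~\ref{lem:Xhat_Xhalf}) bounds $\|\widehat\X^t-\X^{t+\frac12}\|^2$ against $(1-\tfrac{\eta}{2\lambda})\|\widehat\X^t-\X^t\|^2$ plus consensus and noise, using the fixed-point identity $\widehat\vx_i^t=\prox_{\eta r}\big(\tfrac{\eta}{\lambda}\vx_i^t-\eta\nabla f(\widehat\vx_i^t)+(1-\tfrac{\eta}{\lambda})\widehat\vx_i^t\big)$ and nonexpansiveness of $\prox_{\eta r}$; this yields the negative term $-\tfrac{\eta}{4\lambda^2}\|\widehat\X^t-\X^t\|^2=-\tfrac{\eta}{4}\sum_i\|\nabla\phi_\lambda(\vx_i^t)\|^2$ with no conversion needed. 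The mixing step $\vx_i^{t+1}=\sum_j W_{ji}\vx_j^{t+\frac12}$ is then handled through weak convexity of $\phi$ (Lemma~\ref{lem:weak_convx}): $\phi_\lambda(\vx_i^{t+1})\le\sum_j W_{ji}\phi_\lambda(\vx_j^{t+\frac12})+\text{(pairwise terms)}$, after which $\sum_i\sum_j W_{ji}\phi_\lambda(\vx_j^{t+\frac12})=\sum_j\phi_\lambda(\vx_j^{t+\frac12})$ by double stochasticity. Your approach sidesteps this weak-convexity step because mixing preserves the mean, but in exchange $\bar\vx^{t+1}$ is an average of per-node proxes rather than a single prox at $\bar\vx^t$, so the descent argument must absorb that heterogeneity into consensus errors. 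The cleanest way to do this is via Jensen, $\|\widehat\vx^t-\bar\vx^{t+1}\|^2\le\tfrac{1}{n}\sum_i\|\widehat\vx^t-\vx_i^{t+\frac12}\|^2$, followed by the same per-node nonexpansiveness bound as in the paper; the inner-product-plus-prox-inequality route your sketch suggests instead surfaces per-node subgradients of $r$ at $\vx_i^{t+\frac12}$, which are not a priori bounded and would need to be eliminated some other way. Both derivations land at the same $O(1/(\eta T))+O(\eta\sigma^2/(\lambda(1-\rho^2)^3))$ rate, with the paper's avoiding the final conversion step and your mean-point approach staying closer to the centralized Davis--Drusvyatskiy template.
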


By Theorem~\ref{thm:sec2}, we obtain a complexity result as follows.

% In order to have an expected $\epsilon$-stationary point of \eqref{eq:decentralized_problem}, we need the right side of \eqref{eq:them_DProxSGT} is less than $\epsilon^2$. 
% \begin{gather*}
%     \frac{4616 \eta}{\lambda(1-\rho^2)^3} \sigma^2 \leq  \frac{\epsilon^2}{2},\\
%     \textstyle  \frac{8\left( \phi_\lambda(\vx^0) - \phi_\lambda^*\right)}{ \eta T} + \frac{768\eta  \bbE\left[ \|\nabla \bF^0(\I-\J)\|^2\right]}{n\lambda T(1-\rho^2)^3} \leq \frac{\epsilon^2}{2}.
% \end{gather*}
 
\begin{corollary}[Iteration complexity]
Under the assumptions of Theorem~\ref{thm:sec2}, 
for a given $\epsilon>0$, take $ \eta = \min\{\frac{1}{4 L},\frac{(1-\rho^2)^4}{96\rho  L}, \frac{ \lambda(1-\rho^2)^3 \epsilon^2}{9232\sigma^2}\}$. Then $\mathrm{DProxSGT}$ can find an expected $\epsilon$-stationary point of \eqref{eq:decentralized_problem}  when $T \geq  T_\epsilon = \left\lceil \frac{16\left( \phi_\lambda(\vx^0) - \phi_\lambda^*\right)}{ \eta \epsilon^2 } + \frac{1536\eta  \bbE\left[ \|\nabla \bF^0(\I-\J)\|^2\right]}{n\lambda (1-\rho^2)^3 \epsilon^2} \right\rceil$.  
\end{corollary}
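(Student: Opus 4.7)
The corollary is essentially a bookkeeping consequence of Theorem~\ref{thm:sec2}, so the plan is to unpack the three terms on the right-hand side of that theorem and match them against the definition of an expected $\epsilon$-stationary solution. First I would check that the factor $\frac{4}{\lambda\eta}$ appearing in front of $\|\X_\perp^\tau\|^2$ in Theorem~\ref{thm:sec2} dominates the factor $L^2$ appearing in Definition~\ref{def:eps-sol}. Under the stipulated choices $\lambda\le\tfrac{1}{4L}$ and $\eta\le\tfrac{1}{4L}$, we have $\lambda\eta\le\tfrac{1}{16L^2}$, hence $\frac{4}{\lambda\eta}\ge 64L^2\ge L^2$. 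Therefore
\[
\textstyle\frac{1}{n}\bbE\!\left[\sum_{i=1}^n\|\nabla\phi_\lambda(\vx_i^\tau)\|^2+L^2\|\X_\perp^\tau\|^2\right]
\le\textstyle\frac{1}{n}\bbE\!\left[\sum_{i=1}^n\|\nabla\phi_\lambda(\vx_i^\tau)\|^2+\tfrac{4}{\lambda\eta}\|\X_\perp^\tau\|^2\right],
\]
so it suffices to bound the right-hand side of Theorem~\ref{thm:sec2} by $\epsilon^2$.

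Next, I would split that right-hand side into three contributions: the optimization term $\mathcal{T}_1=\frac{8(\phi_\lambda(\vx^0)-\phi_\lambda^*)}{\eta T}$, the variance term $\mathcal{T}_2=\frac{4616\eta\sigma^2}{\lambda(1-\rho^2)^3}$, and the initial-tracking term $\mathcal{T}_3=\frac{768\eta\,\bbE[\|\nabla\bF^0(\I-\J)\|^2]}{n\lambda T(1-\rho^2)^3}$, and arrange for each to be at most a constant multiple of $\epsilon^2$. The stepsize cap $\eta\le\frac{\lambda(1-\rho^2)^3\epsilon^2}{9232\sigma^2}$ is exactly what makes $\mathcal{T}_2\le\frac{\epsilon^2}{2}$; the other two cap conditions $\eta\le\tfrac{1}{4L}$ and $\eta\le\tfrac{(1-\rho^2)^4}{96\rho L}$ ensure the prerequisites of Theorem~\ref{thm:sec2} are still met, so the overall choice is the minimum of the three bounds, which is exactly the $\eta$ stated in the corollary.

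Now I would handle $\mathcal{T}_1$ and $\mathcal{T}_3$ via the iteration budget. Writing $T_\epsilon = T_1+T_3$ with $T_1=\left\lceil\frac{16(\phi_\lambda(\vx^0)-\phi_\lambda^*)}{\eta\epsilon^2}\right\rceil$ and $T_3=\left\lceil\frac{1536\eta\,\bbE[\|\nabla\bF^0(\I-\J)\|^2]}{n\lambda(1-\rho^2)^3\epsilon^2}\right\rceil$, any $T\ge T_\epsilon$ satisfies both $T\ge T_1$ and $T\ge T_3$ separately. Substituting $T\ge T_1$ into $\mathcal{T}_1$ and $T\ge T_3$ into $\mathcal{T}_3$ yields $\mathcal{T}_1\le \frac{\epsilon^2}{2}$ and $\mathcal{T}_3\le \frac{\epsilon^2}{2}$. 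Summing with the already-established bound on $\mathcal{T}_2$ gives the desired $O(\epsilon^2)$ bound (up to a harmless constant factor, which can be absorbed by rescaling $\epsilon$); if an exact constant $\epsilon^2$ is required, one simply rebalances by replacing the constants $16$ and $1536$ with $48$ and $4608$ in $T_\epsilon$ and $9232$ with $27696$ in $\eta$.

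There is no real obstacle here: the corollary is a direct consequence of the theorem, and the only care needed is (i) verifying the factor comparison $\frac{4}{\lambda\eta}\ge L^2$, and (ii) ensuring the constraint $\eta\le\tfrac{\lambda(1-\rho^2)^3\epsilon^2}{9232\sigma^2}$ does not conflict with the other stepsize ceilings from Theorem~\ref{thm:sec2}, which it does not because $\eta$ is taken as the minimum of all three. All substantive work has already been done inside the proof of Theorem~\ref{thm:sec2}.
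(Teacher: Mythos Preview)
Your approach is correct and matches the paper's implicit argument (the corollary is stated without explicit proof, as a direct consequence of Theorem~\ref{thm:sec2}). The one refinement that recovers the exact constants without any rescaling: rather than bounding $\mathcal{T}_1$ and $\mathcal{T}_3$ separately by $\epsilon^2/2$ each, note that both scale as $1/T$ and bound their sum at once. Since
\[
\mathcal{T}_1+\mathcal{T}_3=\frac{1}{T}\left(\frac{8(\phi_\lambda(\vx^0)-\phi_\lambda^*)}{\eta}+\frac{768\eta\,\bbE[\|\nabla\bF^0(\I-\J)\|^2]}{n\lambda(1-\rho^2)^3}\right)
\]
and $T_\epsilon$ equals exactly $\big\lceil\frac{2}{\epsilon^2}\times(\text{this numerator})\big\rceil$, one gets $\mathcal{T}_1+\mathcal{T}_3\le\epsilon^2/2$; combined with $\mathcal{T}_2\le\epsilon^2/2$ the total is $\epsilon^2$ on the nose.
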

 
% \YX{Check how the dependence on $(1-\rho)$ compares to the paper in IEEE about a decentralized subgradient method.}
 
\begin{remark} \label{remark:DProxSGT}
When %the given tolerance 
$\epsilon$ is small enough, $\eta$ will take $\frac{ \lambda(1-\rho^2)^3 \epsilon^2}{9232\sigma^2}$, and $T_\epsilon$ will be dominated by the first term. 
% $  \frac{16\left( \phi_\lambda(\vx^0) - \phi_\lambda^*\right) 9232\sigma^2}{  \lambda(1-\rho^2)^3 \epsilon^4}$
In this case, DProxSGT can find an expected $\epsilon$-stationary solution of \eqref{eq:decentralized_problem} in $O\Big( \frac{\sigma^2\left( \phi_\lambda(\vx^0) - \phi_\lambda^*\right) }{\lambda(1-\rho^2)^3 \epsilon^4}\Big)$ iterations, leading to the same number of stochastic gradient samples %at each node 
and communication rounds. Our sample complexity is optimal in terms of the dependence on $\epsilon$ under the smoothness condition in Assumption~\ref{assu:prob}, as it matches with the lower bound in \cite{arjevani2022lower}. However, the dependence on $1-\rho$ may not be optimal because of our possibly loose analysis, as the \emph{deterministic} method with single communication per update in \cite{scutari2019distributed} for nonconvex nonsmooth problems has a dependence $(1-\rho)^2$ on the graph topology. %Hence, our  and can possibly be further reduced. 
\end{remark}

% \subsection{Compressed Decentralized proximal stochastic gradient decent method with gradient tracking (CDProxSGT)}  \label{sec:CDProxSGT}
 
% \YX{Do similar revisions to the results of CDProxSGT. Then revise the proofs accordingly.}
\begin{theorem}[Convergence rate of CDProxSGT] \label{thm:sect3thm}
 Under Assumptions \ref{assu:prob} through \ref{assu:compressor},
 let $\{\X^t\}$ be generated from $\mathrm{CDProxSGT}$ in Algorithm \ref{alg:CDProxSGT} with
 $\vx_i^0 = \vx^0, \forall\, i \in \hN$. Let $\lambda = \min \big\{\frac{1}{4 L}, \frac{ (1-\alpha^2)^2}{9 L+41280}\big\}$, and suppose
\vspace{-0.1cm}
\begin{gather*}
 \eta \leq~ \min\left\{ \textstyle \lambda, %\frac{1-\widehat\rho^2_y}{8\sqrt{5} L}, 
 \frac{(1-\alpha^2)^2(1-\widehat\rho^2_x)^2(1-\widehat\rho^2_y)^2}{18830\max\{1, L\}}\right\}, \\
\gamma_x\leq~ \min\left\{ \textstyle %\frac{2\sqrt{3}-3}{6\alpha},
%\frac{ 1-\widehat\rho_x^2}{60\alpha},   
\frac{1-\alpha^2}{25}, \frac{\eta}{\alpha}%,  \frac{1}{6\alpha}
\right\}, \quad 
 \gamma_y\leq ~   \textstyle 
 \frac{(1-\alpha^2)(1-\widehat\rho^2_x)(1-\widehat\rho^2_y)}{317}.
\end{gather*}
\vspace{-0.1cm}
Select $\tau$ from $\{0, 1, \ldots, T-1\}$ uniformly at random.
Then 
\vspace{-0.1cm}
\begin{equation*}%\label{eq:them_CDProxSGT}
    \begin{aligned}
   &~ \textstyle \frac{1}{n} \bbE\left[\sum_{i=1}^n \|\nabla\phi_\lambda(\vx_i^\tau)\|^2 +\frac{4}{\lambda \eta} \|\X^\tau_\perp\|^2\right]   \\
   \leq &~\textstyle \frac{8\left(\phi_\lambda(\vx^0) - \phi_\lambda^*\right)}{\eta T}  
     +\frac{(50096n+48)\eta \sigma^2}{n\lambda(1-\widehat\rho^2_x)^2(1-\widehat\rho^2_y)} +   \frac{4176 \eta \bbE\left[ \|\nabla \bF^0\|^2\right] }{n\lambda T (1-\widehat\rho^2_x)^2(1-\widehat\rho^2_y)},
\end{aligned}
\vspace{-0.1cm}
\end{equation*}
where $\phi_\lambda^* = \min_{\vx} \phi_\lambda(\vx)> -\infty$. %\YX{Cannot be $-\infty$.}
\end{theorem}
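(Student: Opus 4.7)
The plan is to extend the Moreau-envelope descent analysis used for DProxSGT to the compressed setting. A key observation is that, because $\W$ is doubly stochastic and compression is applied only to quantities that are then multiplied by $\W-\I$, averaging both sides of the updates gives $\bar\vx^{t+1}=\bar\vx^{t+\frac{1}{2}}$ and $\bar\vy^{t}=\bar\vy^{t-\frac{1}{2}}$, and an easy induction from the initialization yields $\bar\vy^t=\overline{\nabla}\bF^t$ throughout. Therefore the averaged iterate $\bar\vx^t$ obeys a ``virtual'' stochastic proximal step driven by the mean gradient, and one can derive a one-step descent for $\phi_\lambda(\bar\vx^t)$ via the three-point inequality for the proximal operator together with $L$-weak convexity of $\phi$, upper-bounding $\bbE_t[\phi_\lambda(\bar\vx^{t+1})]-\phi_\lambda(\bar\vx^t)$ by $-\Theta(\eta)\|\nabla\phi_\lambda(\bar\vx^t)\|^2$ plus terms proportional to $\|\X_\perp^t\|^2$, $\|\Y_\perp^t\|^2$, and the variance $\sigma^2/n$. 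The $1/n$ factor in front of $\sigma^2$ is exactly what lets an $\mathcal{O}(1)$ batch size suffice.

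The hard part is to obtain matching recursions for the four auxiliary error quantities $\|\X_\perp^t\|^2$, $\|\Y_\perp^t\|^2$, $\|\underline\X^{t}-\X^{t-\frac{1}{2}}\|^2$, and $\|\underline\Y^{t}-\Y^{t-\frac{1}{2}}\|^2$. From the matrix identities \eqref{eq:Y_hatW}--\eqref{eq:compX_hatW}, the consensus steps act as a contraction by the effective mixing matrices $\widehat\W_x,\widehat\W_y$ with rates $\widehat\rho_x,\widehat\rho_y<1$, plus an additive perturbation $\gamma_x(\underline\X^{t+1}-\X^{t+\frac{1}{2}})(\W-\I)$ (and its $\Y$-analogue). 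Applying Young's inequality with $\|\W-\I\|_2\le 2$ separates the contraction from the perturbation, while Assumption~\ref{assu:compressor} furnishes $\bbE_Q\|\underline\X^{t+1}-\X^{t+\frac{1}{2}}\|^2\le(1-(1-\alpha^2)/2)\|\underline\X^{t}-\X^{t+\frac{1}{2}}\|^2$ plus a cross term, which I would split by Young into something absorbable into the consensus recursion and a ``movement'' term $\|\X^{t+\frac{1}{2}}-\X^{t-\frac{1}{2}}\|^2$ controlled via non-expansiveness of $\prox_{\eta r}$ and the boundedness of $\eta\|\vy_i^t\|$. The analogous $\Y$-side recursion additionally uses $L$-smoothness of each $f_i$ to bound $\|\nabla\bF^t-\nabla\bF^{t-1}\|^2$ in terms of $\|\X^t-\X^{t-1}\|^2$ and $\sigma^2$.

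Finally, I would assemble a Lyapunov function of the form
\begin{equation*}
\Phi^t=\phi_\lambda(\bar\vx^t)+c_1\|\X_\perp^t\|^2+c_2\|\Y_\perp^t\|^2+c_3\|\underline\X^{t}-\X^{t-\frac{1}{2}}\|^2+c_4\|\underline\Y^{t}-\Y^{t-\frac{1}{2}}\|^2,
\end{equation*}
and tune $c_1,\ldots,c_4$ jointly with the smallness requirements on $\eta,\gamma_x,\gamma_y$ so that every off-diagonal cross term in the resulting coupled recursion is absorbable and the residual coefficient of $\|\X_\perp^t\|^2$ is at least $\tfrac{4}{\lambda\eta n}$, matching the theorem statement. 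The step-size conditions $\eta\lesssim(1-\alpha^2)^2(1-\widehat\rho_x^2)^2(1-\widehat\rho_y^2)^2$ and the thresholds on $\gamma_x,\gamma_y$ are exactly what make this coupled system contractive. Telescoping $\sum_{t=0}^{T-1}\bbE[\Phi^{t}-\Phi^{t+1}]\ge 0$, dividing by $T$, and using $\Phi^T\ge\phi_\lambda^*$ together with the initialization (under which the initial compression residual for $\Y$ equals $\|\nabla\bF^0-Q_\vy[\nabla\bF^0]\|^2\le\alpha^2\|\nabla\bF^0\|^2$) yields the claimed bound. The main obstacle, and the source of the explicit constants such as $41280$ and $18830$ in the theorem, is precisely this coupled bookkeeping: each auxiliary quantity feeds into the others through cross terms, so the $c_i$ and the step-size thresholds must be chosen carefully to keep the coupling contractive while preserving the $1/n$ scaling in front of the stochastic-variance term.
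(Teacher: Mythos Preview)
Your overall architecture (coupled recursions for consensus and compression errors, a Lyapunov combination, telescoping) is right, but there is a genuine gap in the descent step that the paper handles differently.

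You claim that because $\bar\vx^{t+1}=\bar\vx^{t+\frac{1}{2}}$ and $\bar\vy^t=\overline{\nabla}\bF^t$, the averaged iterate ``obeys a virtual stochastic proximal step driven by the mean gradient,'' and then invoke the three-point inequality for the proximal operator to get one-step descent of $\phi_\lambda(\bar\vx^t)$. This is not correct when $r\not\equiv 0$: from \eqref{eq:x_y_mean} one has $\bar\vx^{t+1}=\frac{1}{n}\sum_i\prox_{\eta r}(\vx_i^t-\eta\vy_i^t)$, which is \emph{not} $\prox_{\eta r}(\bar\vx^t-\eta\bar\vy^t)$ because the proximal map is nonlinear. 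Consequently the standard three-point inequality does not apply to $\bar\vx^{t+1}$, and you cannot directly extract a $-\Theta(\eta)\|\nabla\phi_\lambda(\bar\vx^t)\|^2$ term this way. The paper sidesteps this by tracking $\sum_{i=1}^n\phi_\lambda(\vx_i^t)$ instead of $\phi_\lambda(\bar\vx^t)$: each $\vx_i^{t+1}$ is a convex combination (via $\widehat\W_x$) of genuine prox points $\vx_j^{t+\frac{1}{2}}$ plus a compression perturbation, and the weak-convexity Jensen inequality of Lemma~\ref{lem:weak_convx} bounds $\phi_\lambda(\vx_i^{t+1})$ by the corresponding combination of $\phi_\lambda(\vx_j^{t+\frac{1}{2}})$ plus a pairwise-distance correction (Lemma~\ref{lem:phi_one_step}). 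The descent then comes from $\phi_\lambda(\vx_i^{t+\frac12})\le\phi(\widehat\vx_i^t)+\frac{1}{2\lambda}\|\widehat\vx_i^t-\vx_i^{t+\frac12}\|^2$ combined with the key bound on $\|\widehat\X^t-\X^{t+\frac12}\|^2$ in Lemma~\ref{lem:Xhat_Xhalf_comp}, which produces the negative term $-\frac{\eta}{4\lambda^2}\|\widehat\X^t-\X^t\|^2=-\frac{\eta}{4}\sum_i\|\nabla\phi_\lambda(\vx_i^t)\|^2$ directly in the quantity the theorem asks for.

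Two smaller points. First, the paper's Lyapunov function uses the ``post-communication'' compression residuals $\|\X^t-\underline\X^t\|^2$ and $\|\Y^t-\underline\Y^t\|^2$ rather than your $\|\underline\X^t-\X^{t-\frac12}\|^2$; this choice interacts better with the recursions in Lemmas~\ref{lem:X_consensus_comperror}--\ref{lem:Y_consensus_comperror}. Second, your assertion that the descent step yields a $\sigma^2/n$ variance coefficient is not what the theorem claims: the dominant variance term is $\frac{50096n\,\eta\sigma^2}{n\lambda(\cdot)}=O(\eta\sigma^2)$ with no $1/n$ gain, and the $\mathcal{O}(1)$-batch guarantee comes from choosing $\eta=O(\epsilon^2)$, not from variance averaging across workers.
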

% \begin{remark}
% When the conditions of Theorem \ref{thm:sect3thm} hold, the consensus error of $\X$ satisfies
% \begin{align*}
%    &~\frac{1}{nT}\sum_{t=0}^{T-1} \bbE\big[\|\X^t_\perp\|^2\big]\leq   \textstyle  \frac{1044 \eta^2 \bbE\left[ \|\nabla \bF^0\|^2\right] }{n T (1-\widehat\rho^2_x)^2(1-\widehat\rho^2_y)} \\
%    &~  \textstyle  +\frac{2\lambda\left(\Phi^0 -\Phi^{T}\right)}{n T} +\frac{(12524 n + 12)\eta^2 \sigma^2}{n(1-\widehat\rho^2_x)^2(1-\widehat\rho^2_y)^2}.
% \end{align*}
% \end{remark}

By Theorem~\ref{thm:sect3thm}, we have the complexity result as follows.

\begin{corollary}[Iteration complexity]
Under the assumptions of Theorem \ref{thm:sect3thm}, for a given $\epsilon>0$, take 
\begin{gather*}
    \eta = \textstyle \min \left\{\frac{1}{4 L}, \frac{ (1-\alpha^2)^2}{9 L+41280}, \frac{(1-\alpha^2)^2(1-\widehat\rho^2_x)^2(1-\widehat\rho^2_y)^2}{18830\max\{1, L\}}\right.,\\ \textstyle  \left. \frac{n\lambda(1-\widehat\rho^2_x)^2(1-\widehat\rho^2_y)\epsilon^2}{2(50096n+48) \sigma^2}\right\}, \\
%\end{gather*}
%\begin{gather*} 
   \textstyle  \gamma_x = \min\left\{ \textstyle  
\frac{1-\alpha^2}{25}, \frac{\eta}{\alpha}\right\}, \quad
 \gamma_y =  \frac{(1-\alpha^2)(1-\widehat\rho^2_x)(1-\widehat\rho^2_y)}{317}.
\end{gather*}
Then $\mathrm{CDProxSGT}$ can find an expected $\epsilon$-stationary point of \eqref{eq:decentralized_problem} when  $T\geq T_\epsilon^c$ where 
\begin{align*} 
    T_\epsilon^c = \textstyle \left\lceil\frac{16\left(\phi_\lambda(\vx^0) - \phi_\lambda^*\right)}{\eta \epsilon^2}  +   \frac{8352 \eta \bbE\left[ \|\nabla \bF^0\|^2\right] }{n\lambda (1-\widehat\rho^2_x)^2(1-\widehat\rho^2_y)\epsilon^2} \right\rceil.
\end{align*}

\end{corollary}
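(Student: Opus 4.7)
The plan is to chain the bound in Theorem~\ref{thm:sect3thm} with the prescribed stepsizes and extract a lower bound on $T$ that drives the right-hand side below $\epsilon^2$. First I verify that the theorem's left-hand side majorizes the stationarity measure in Definition~\ref{def:eps-sol}: since $\lambda\le\frac{1}{4L}$ and $\eta\le\lambda$, one has $\lambda\eta\le\lambda^2\le\frac{1}{16L^2}$, hence $\frac{4}{\lambda\eta}\ge 64L^2\ge L^2$, so any $\X^\tau$ for which the theorem's bound is at most $\epsilon^2$ is automatically an expected $\epsilon$-stationary solution. I also check that the stepsizes in the corollary, defined as a minimum of several candidates, fulfill the hypotheses of Theorem~\ref{thm:sect3thm}. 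This is by construction: the first three arguments in the $\min$ defining $\eta$ reproduce the bounds $\eta\le\lambda$ and $\eta\le\frac{(1-\alpha^2)^2(1-\widehat\rho^2_x)^2(1-\widehat\rho^2_y)^2}{18830\max\{1,L\}}$ required there, and $\gamma_x,\gamma_y$ are chosen within the permitted ranges.

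Next I would split the target budget $\epsilon^2$ into two halves. The fourth argument of the $\min$ defining $\eta$, namely $\frac{n\lambda(1-\widehat\rho^2_x)^2(1-\widehat\rho^2_y)\epsilon^2}{2(50096n+48)\sigma^2}$, is tuned precisely so that
\begin{equation*}
\frac{(50096n+48)\eta\sigma^2}{n\lambda(1-\widehat\rho^2_x)^2(1-\widehat\rho^2_y)}\le\frac{\epsilon^2}{2},
\end{equation*}
absorbing the $T$-independent noise term. For the two remaining $\mathcal{O}(1/T)$ terms, I demand
\begin{equation*}
\frac{8(\phi_\lambda(\vx^0)-\phi_\lambda^*)}{\eta T}+\frac{4176\,\eta\,\bbE[\|\nabla\bF^0\|^2]}{n\lambda T(1-\widehat\rho^2_x)^2(1-\widehat\rho^2_y)}\le\frac{\epsilon^2}{2}.
\end{equation*}
Multiplying through by $2/\epsilon^2$ and isolating $T$ yields exactly the ceiling expression for $T_\epsilon^c$ in the statement. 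Adding the two half-budget bounds then gives the theorem's right-hand side at most $\epsilon^2$, completing the verification.

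No real obstacle arises: this is a routine tuning argument downstream of the convergence theorem. The only subtleties are the bookkeeping check $\frac{4}{\lambda\eta}\ge L^2$ (so the theorem's surrogate dominates the measure in Definition~\ref{def:eps-sol}) and confirming that each argument of the minima defining $\eta$, $\gamma_x$, $\gamma_y$ is compatible with the hypotheses of Theorem~\ref{thm:sect3thm}, both of which are immediate from the choices. One could equally well distribute the $\epsilon^2$ budget differently across the three terms, which would only change the numerical constants in $T_\epsilon^c$.
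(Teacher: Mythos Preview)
Your proposal is correct and mirrors the paper's intended derivation: the paper does not spell out a proof of this corollary, treating it as an immediate consequence of Theorem~\ref{thm:sect3thm}, and your argument supplies exactly the routine parameter-tuning details (verifying $\eta\le\lambda$ so the surrogate dominates the stationarity measure, absorbing the $\sigma^2$ term with the fourth entry of the $\min$, and bounding the two $1/T$ terms by $\epsilon^2/2$ to obtain $T_\epsilon^c$). One small point worth flagging, though it is a feature of the statement rather than a flaw in your proof: $\widehat\rho_x$ and $\widehat\rho_y$ themselves depend on $\gamma_x$ and $\gamma_y$, so the prescriptions for $\gamma_x,\gamma_y,\eta$ are implicitly coupled; the corollary (and your verification) treat them as given constants, which is how the paper uses them as well.
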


\begin{remark}
When the given tolerance $\epsilon$ is small enough,
$\eta$ will take $\frac{n\lambda(1-\widehat\rho^2_x)^2(1-\widehat\rho^2_y)\epsilon^2}{2(50096n+48) \sigma^2}$ and $T_\epsilon^c$ will be dominated by the first term. In this case, similar to DProxSGT in Remark \ref{remark:DProxSGT}, CDProxSGT can find an expected $\epsilon$-stationary solution of \eqref{eq:decentralized_problem} in $O\Big( \frac{\sigma^2\left( \phi_\lambda(\vx^0) - \phi_\lambda^*\right) }{ \lambda(1-\widehat\rho^2_x)^2(1-\widehat\rho^2_y) \epsilon^4}\Big)$ iterations. %leading to the same number of stochastic gradient samples at each node and communication rounds.
\end{remark}  
\section{Numerical Experiments}\label{sec:numerical_experiments}
In this section, we test the proposed algorithms on training two neural network models, in order to demonstrate their better generalization over momentum variance-reduction methods and large-batch training methods and to demonstrate the success of handling heterogeneous data even when only compressed model parameter and gradient information are communicated among workers.
One neural network that we test is LeNet5  \cite{lecun1989backpropagation} on the FashionMNIST dataset \cite{xiao2017fashion}, and the other is FixupResNet20 \cite{zhang2019fixup} on Cifar10 \cite{krizhevsky2009learning}. 
% \YX{The two data sets are well-known. We may not need to mention the number of samples and size, but just mention the number of classes, because they will be distributed based on the class.} 
% Fashion-MNIST includes 60,000 training samples and 10,000 test samples with each sample as a 28x28 grayscale image. 
% Cifar10 includes 50000 training samples and 10000 test samples with each sample as a 32x32 colour image. 

Our experiments are representative to show the practical performance of our methods. Among several closely-related works,  \cite{xin2021stochastic} includes no experiments, and \cite{mancino2022proximal,zhao2022beer} only tests on tabular data and MNIST. \cite{koloskova2019decentralized-b} tests its method on Cifar10 but needs %with distinct data between nodes during the training, but the data on all nodes has 
similar data distribution on all workers 
for good performance.
%Although with similar-scale as MNIST, 
FashionMNIST has a similar scale as MNIST but poses a more challenging classification task \cite{xiao2017fashion}. 
% \YX{Mention the challenges of FashionMNIST. Otherwise reviewers would think it is the same as MNIST.}.  
Cifar10 is more complex, and FixupResNet20 has more layers than LeNet5.

% \PYB{We may need to comment that these tasks and models are of similar-scale as the experiments shown in other related works; o.w., reviewers may simply ask us to run on more datasets and models. We also need to emphasize these experiments are representative to validate the theoretical insights that we proved}

% \YX{This paragraph is redundant. The description is also made in the next two subsections.} We first compare DProxSGT with AllReduce\footnote{In AllReduce, model parameters (or equivalently the stochastic gradients) are averaged among all nodes in each iteration. We see AllReduce as a baseline.}, DEEPSTORM\footnote{For DEEPSTORM, we implement the verison of DEEPSTORM v2 in \cite{mancino2022proximal}.} \cite{mancino2022proximal} and ProxGT-SA \cite{xin2021stochastic} by training the models with the regularization $r(\vx) = \mu\|\vx\|_1$ in Section %\PYB{subsection $\rightarrow$ Section} 
% \ref{subsect:RegL1}. 
% Then we compare the compressed CDProxSGT with AllReduce, DProxSGT, Choco-SGD \cite{koloskova2019decentralized,koloskova2019decentralized-b} and BEER \cite{zhao2022beer} by training the models with $r(\vx)=0$ in Section \ref{subsect:compress}.
 
All the compared algorithms are implemented in Python with Pytorch %for tensor computing 
and MPI4PY (for distributed computing). %the communication between different nodes, and 
They run on a Dell workstation with %32 CPU cores, 64 GB memory, and 
two Quadro RTX 5000 GPUs. We use the 2 GPUs as 5 workers, which communicate over a ring-structured network (so each worker can only communicate with two neighbors). Uniform weight is used, i.e., $W_{ji} = \frac{1}{3}$ for each pair of connected workers $i$ and $j$. %We test our algorithms on extremely heterogeneous data, i.e., the samples are divided to the nodes by labels. 
Both FashionMNIST and Cifar10 have 10 classes. We distribute each data onto the 5 workers based on the class labels, namely, each worker holds 2 classes of data points, and thus the data are heterogeneous across the workers. %, i.e., each sample is labeled by one of $0,1,\ldots,9$. 
%The 2 GPUs cannot run 10 processes due to the limited memory of GPUs. We evaluates on 5 processes i.e. 5 nodes.  The 5 nodes are connected in a ring and the mixing matrix is set by $W_{ji} = \frac{1}{3}$ when $ (|i-j|  \mod 4) \leq 1$ otherwise $0$ for all $i, j\in\{0,1,2,3,4\}$. We divide the whole training dataset into five heterogeneous subsets. Specifically, each node has a subset with samples of two classes, i.e., for $i =0,1,2,3,4$, node $i$ has the samples with classes $i$ and $9-i$. 

For all methods, we report their objective values on training data, prediction accuracy on testing data, and consensus errors at each epoch. 
To save time, the objective values are computed as the average of the losses that are evaluated during the training process (i.e., on the sampled data instead of the whole training data) plus the regularizer per epoch. 
% \YX{Do you mean the objective values are the averages over all iterates per epoch?}
For the testing accuracy, we first compute the accuracy on the whole testing data for each worker by using its own model parameter and then take the average. %by the local models $\{\vx_i\}_{i\in\hN}$ after each epoch. And 
The consensus error is simply $\|\X_\perp\|^2$. %for $\{\vx_i\}_{i\in\hN}$ 
%after each epoch. %Notice that the AllReduce method is a centralized method The consensus error from  is always $0$ and thus is not shown in the figures of results plotted by ``semilogy".

\subsection{Sparse Neural Network Training} \label{subsect:RegL1}
In this subsection, we test the non-compressed method DProxSGT and compare it with AllReduce (that is a centralized method and used as a baseline), DEEPSTORM\footnote{For DEEPSTORM, we implement DEEPSTORM v2 in \cite{mancino2022proximal}.} and ProxGT-SA \cite{xin2021stochastic} on solving \eqref{eq:decentralized_problem}, where  $f$ is the loss on the whole training data and $r(\vx) = \mu\|\vx\|_1$ serves as a sparse regularizer that encourages a sparse model. %The $1$-norm regularization pushes the model parameters toward $0$ and becoming sparse.

For training LeNet5 on FashionMNIST, we set $\mu= 10^{-4}$ and run each method to 100 epochs. The learning rate $\eta$ and batchsize are set to $0.01$ and 8 for AllReduce and DProxSGT. 
DEEPSTORM uses the same $\eta$ and batchsize but with a larger initial batchsize 200, 
%an additional 200 samples for the initial gradient estimate, 
and its momentum parameter is tuned to $\beta=0.8$ in order to yield the best performance. %, which is the best among $\{0.2, 0.5, 0.8\}$.
ProxGT-SA is a large-batch training method. %implemented by almost the same as DProxSGT, except it uses a larger 
We set its batchsize to 256 and accordingly apply a larger step size $\eta=0.3$ that is the best among $\{0.1, 0.2, 0.3, 0.4\}$. %$\eta=0.2$ and $0.3$ give similar results and are the best in $\{0.1, 0.2, 0.3, 0.4\}$.

For training FixupResnet20  on Cifar10, we set $\mu= 5 \times 10^{-5}$ and run each method to 500 epochs. %We use $\eta=0.02$ and batchsize=64 for 
The learning rate and batchsize are set to $\eta=0.02$ and 64 for AllReduce, DProxSGT, and DEEPSTORM. The initial batchsize is set to 1600 for DEEPSTORM and the momentum parameter set to $\beta=0.8$. 
%For DEEPSTORM uses $\eta=0.02$ and batchsize=64, with additional 1600 samples for the initial gradient estimate, and the best $\beta=0.8$ is selected from $0.2, 0.5, 0.8$.
ProxGT-SA uses a larger batchsize 512 and a larger stepsize $\eta=0.1$ that gives the best performance among $\{0.05, 0.1, 0.2, 0.3\}$. % $\eta=0.05$ and $0.1$ give similar results and are the best in $\{0.05, 0.1, 0.2, 0.3\}$. 

\begin{figure}[ht] 
\begin{center}  
\includegraphics[width=.9\columnwidth]{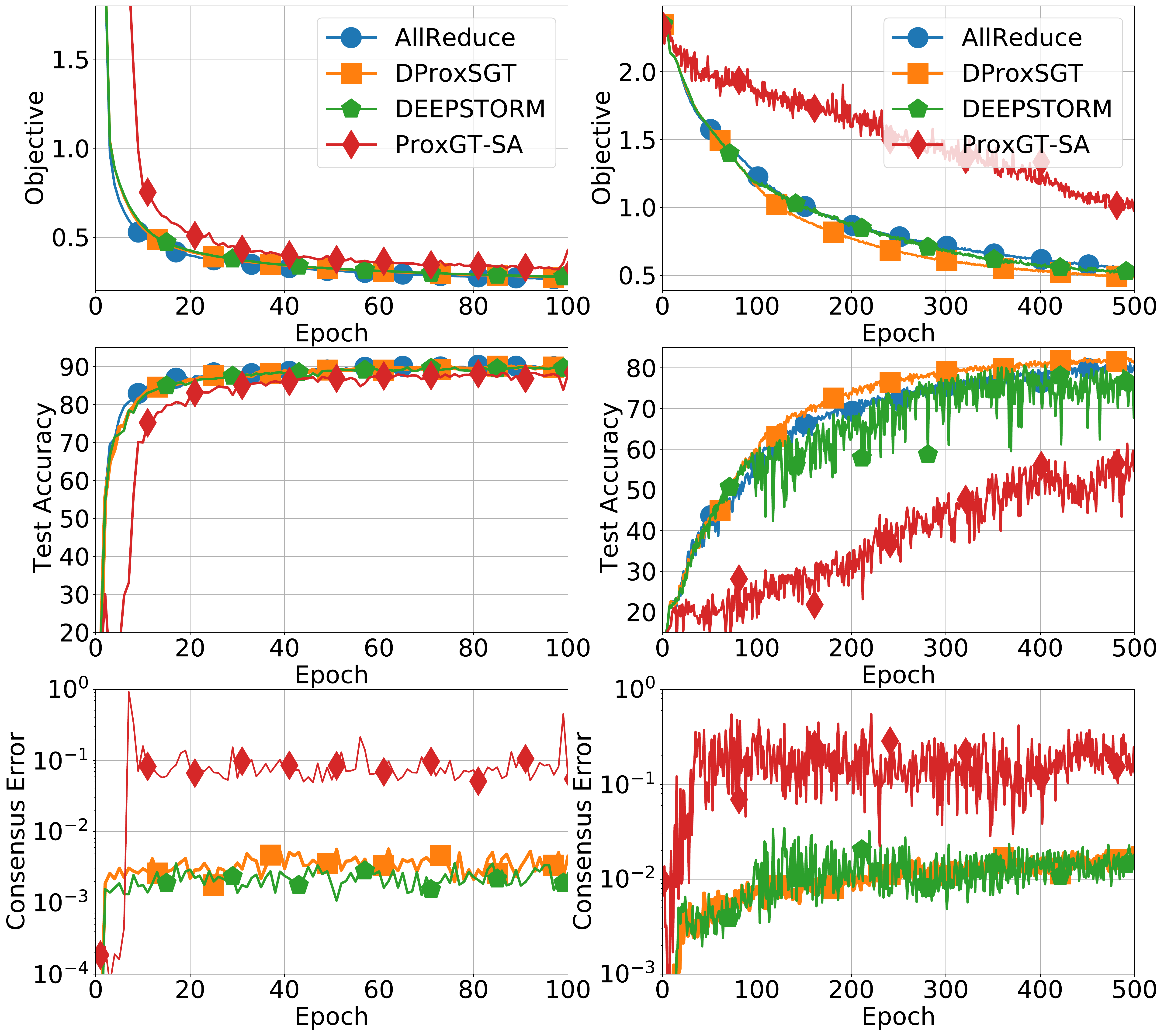} 
\vspace{-0.2cm}
\caption{Results of training sparse neural networks by non-compressed methods with $r(\vx) = \mu \|\vx\|_1$ for the same number of epochs. Left: LeNet5 on FashionMNIST with $\mu=10^{-4}$. Right: FixupResnet20 on Cifar10 with $\mu=5\times 10^{-5}$.}
\label{fig:RegL1}
\end{center} 
\end{figure}

% \begin{figure*}[ht] 
% \begin{center} 
% \includegraphics[width=1.95\columnwidth]{} \\
% \includegraphics[width=1.95\columnwidth]{}  
% \caption{Results of training sparse neural networks by non-compressed methods with $r(\vx) = \mu \|\vx\|_1$ for the same number of epochs. Top: LeNet5 on FashionMNIST with $\mu=10^{-4}$. Bottom: FixupResnet20 on Cifar10 with $\mu=5\times 10^{-5}$.}
% \label{fig:RegL1}
% \end{center} 
% \end{figure*}

The results for all methods are plotted in Figure \ref{fig:RegL1}. For LeNet5, DProxSGT produces almost the same curves as the centralized training method AllReduce, while on FixupResnet20, DProxSGT even outperforms AllReduce in terms of testing accuracy. This could be because AllReduce aggregates stochastic gradients from all the workers for each update and thus equivalently, it actually uses a larger batchsize. %i.e., updates the models with the batch size of $64\times 5=320$. 
DEEPSTORM performs equally well as our method DProxSGT on training LeNet5. However, it gives lower testing accuracy than DProxSGT and also oscillates significantly more seriously on training the more complex neural network FixupResnet20. This appears to be caused by the momentum variance reduction scheme used in DEEPSTORM.
%The consensus errors from DProxSGT and DEEPSTORM oscillate around the same range. And DEEPSTORM performs almost the same as AllReduce and DProxSGT while training LeNet5, but while training the larger model FixupResnet20, the objective curve is as good as AllReduce but the testing accuracy curvy is lower and has large oscillations. We believe that the lower performance of DEEPSTORM on the testing accuracy is because of the momentum variance reduction schemes. 
In addition, we see that the large-batch training method ProxGT-SA performs much worse than DProxSGT within the same number of epochs (i.e., data pass), especially on training FixupResnet20. %converges much slower and has larger consensus error than the other three methods. For training LeNet5, the objective and testing accuracy curves of ProxGT-SA also arrive plateau after about 60 epochs, but the plateau objective is higher and the plateau testing accuracy is lower than others. For training FixupResnet20,  the objective and testing accuracy curves of ProxGT-SA do not arrive plateau in 500 epochs while the curves of other methods arrive  plateau after about 400 epochs. ProxGT-SA requires a large batch size which means less iterations in the given epochs and a larger best step size. Less iteration and large step size should be the reason why ProxGT-SA  converges slowly and oscillates in a larger range. 
%Thus our proposed DProxSGT performs the best, DEEPSTORM performs well for the small neural network LeNet5 but its generalization on testing data for larger neural network FixupResnet20 lowers down. ProxGT-SA is far from others while training both the two models. 
% \YX{From the observation, state the possible reason that DEEPSTORM and ProxGT-SA gives lower testing accuracy. This will support our motivation of studying the small-batch method without momentum variance reduction.}

\subsection{Neural Network Training by Compressed Methods} \label{subsect:compress}
In this subsection, we compare CDProxSGT with two state-of-the-art compressed training methods: Choco-SGD \cite{koloskova2019decentralized,koloskova2019decentralized-b} and BEER \cite{zhao2022beer}. As Choco-SGD and BEER are studied only for  problems without a regularizer, we set $r(\vx)=0$ in \eqref{eq:decentralized_problem} for the tests. Again, we compare their performance on training LeNet5 and FixupResnet20. %the two neural networks in Section \ref{subsect:RegL1}. 
The two non-compressed methods AllReduce and  DProxSGT are included as baselines.  %In the testing, 
The same compressors are used for CDProxSGT, Choco-SGD, and BEER, when compression is applied. %all the methods when compression is performed.

\begin{figure}[htbp] 
\begin{center} 
\includegraphics[width=.9\columnwidth]{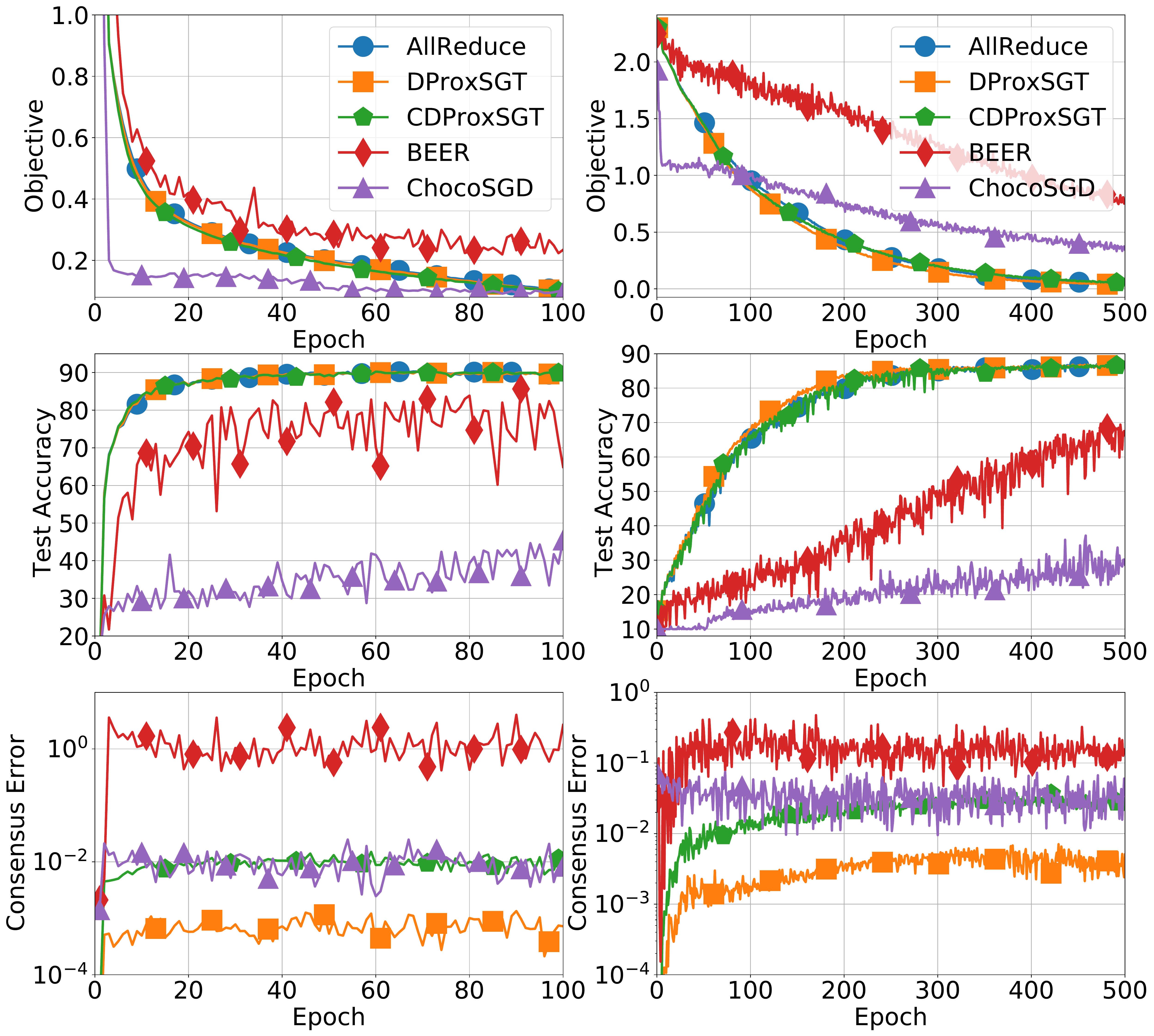}  
\vspace{-0.2cm}
\caption{Results of training neural network models by compressed methods for the same number of epochs.  Left: LeNet5 on FashionMNIST. Right: FixupResnet20 on Cifar10.}
\label{fig:Compress}
\end{center} 
\end{figure} 

We run each method to 100 epochs for training LeNet5 on FashionMNIST. 
The compressors $Q_y$  and $Q_x$ are set to top-$k(0.3)$ \cite{aji2017sparse}, i.e., taking the largest $30\%$ elements of an input vector in absolute values and zeroing out all others. %as $0$, i.e., , and .
We set batchsize to 8 and tune the learning rate $\eta$ to $0.01$ for AllReduce, DProxSGT, CDProxSGT and Choco-SGD, and for CDProxSGT, we set $\gamma_x=\gamma_y=0.5$. 
BEER is a large-batch training method. It uses a larger batchsize 256 and accordingly a larger learning rate $\eta=0.3$, which appears to be the best among $\{0.1, 0.2, 0.3, 0.4\}$. 
% \YX{Mention that the compared methods use the same compressor where compression is performed.}
 
For training FixupResnet20 on the Cifar10 dataset, we run each method to 500 epochs. We take top-$k(0.4)$ \cite{aji2017sparse} as the compressors $Q_y$  and $Q_x$ and set $\gamma_x=\gamma_y=0.8$.
For AllReduce, DProxSGT, CDProxSGT and Choco-SGD, we set their batchsize to 64 and tune the learning rate $\eta$ to $0.02$. For BEER, we use a larger batchsize 512 and a larger learning rate $\eta=0.1$, which is the best among
%$\eta=0.05$ and $0.1$ give similar results and are the best in
$\{0.05, 0.1, 0.2, 0.3\}$. 

The results are shown in Figure \ref{fig:Compress}. 
For both models, CDProxSGT yields almost the same curves of objective values and testing accuracy as its non-compressed counterpart DProxSGT and the centralized non-compressed method AllReduce. This indicates about 70\% saving of communication for the training of LeNet5 and 60\% saving for FixupResnet20 without sacrifying the testing accuracy. %has slightly higher consensus errors than DProxSGT, but CDProxSGT performs almost the same in objective and testing accuracy as DProxSGT and AllReduce. Compared with DProxSGT, CDProxSGT transmit only 30\% and 40\% amount in communication for training LeNet5 and FixupResnet20 respectively but keep the same testing accuracy. 
In comparison, BEER performs significantly worse than the proposed method CDProxSGT within the same number of epochs in terms of all the three measures, especially on training the more complex neural network FixupResnet20, which should be attributed to the use of a larger batch by BEER. Choco-SGD can produce comparable objective values. However, its testing accuracy is much lower than that produced by our method CDProxSGT. %has a large consensus error and converges slower in objective and testing accuracy: BEER has a higher plateau objective and lower plateau testing accuracy in training LeNet5 and does not arrive plateau in 500 epochs while others arrive after about 400 epochs in training FixupResnet20.
%The testing accuracy from Choco-SGD is far from others, although its objective is not the worst one, or even is the best for training LeNet5, and its consensus error is similar to CDProxSGT's. 
This should be because of the data heterogeneity that ChocoSGD cannot handle, while CDProxSGT applies the gradient tracking to successfully address the challenges of data heterogeneity. %, and thus the models on each node converge to the local optimal. 

\section{Conclusion}
We have proposed two decentralized proximal stochastic gradient methods, DProxSGT and CDProxSGT, for nonconvex composite problems with data heterogeneously distributed on the computing nodes of a connected graph. CDProxSGT is an extension of DProxSGT by applying compressions on the communicated model parameter and gradient information. Both methods need only a single or $\mathcal{O}(1)$ samples for each update, which is important to yield good generalization performance on training deep neural networks. The gradient tracking is used in both methods to address data heterogeneity. An $\mathcal{O}\left( \frac{1}{ \epsilon^4}\right)$ sample complexity and communication complexity is established to both methods to produce an expected $\epsilon$-stationary solution. 
%At each iteration,  DProxSGT firstly gets a stochastic gradient with  $\mathcal{O}(1)$ samples, then performs the gradient tracking step to track the global gradients and the proximal gradient descent step to update the model parameters,  finally averages the model parameters with neighbors. CDProxSGT extends DProxSGT by applying compressions to the communication of both the tracked gradients and model parameters to reduce the transmitted amount. For both methods, we have proved the $\mathcal{O}\left( \frac{1}{ \epsilon^4}\right)$ sample complexity and communication complexity to get an expected $\epsilon$-stationary solution.
Numerical experiments on training neural networks demonstrate the good generalization performance and the ability of the proposed methods on handling heterogeneous data. %training demonstrates their superior practical performance.
%via the Moreau envelop.  

% In the unusual situation where you want a paper to appear in the
% references without citing it in the main text, use \nocite
% \nocite{langley00}

\bibliography{reference}
\bibliographystyle{icml2023}

%%%%%%%%%%%%%%%%%%%%%%%%%%%%%%%%%%%%%%%%%%%%%%%%%%%%%%%%%%%%%%%%%%%%%%%%%%%%%%%
%%%%%%%%%%%%%%%%%%%%%%%%%%%%%%%%%%%%%%%%%%%%%%%%%%%%%%%%%%%%%%%%%%%%%%%%%%%%%%%
% APPENDIX
%%%%%%%%%%%%%%%%%%%%%%%%%%%%%%%%%%%%%%%%%%%%%%%%%%%%%%%%%%%%%%%%%%%%%%%%%%%%%%%
%%%%%%%%%%%%%%%%%%%%%%%%%%%%%%%%%%%%%%%%%%%%%%%%%%%%%%%%%%%%%%%%%%%%%%%%%%%%%%%
\newpage
\appendix
\onecolumn

%%%%%%%%%%%%%%%%%%%%%%%%%%%%%%%%%%%%%%%%%%%%%%%%%%%%%%%%%%%%%%%%%%%%%%%%%%%%%%%
%%%%%%%%%%%%%%%%%%%%%%%%%%%%%%%%%%%%%%%%%%%%%%%%%%%%%%%%%%%%%%%%%%%%%%%%%%%%%%%

\section{Some Key Existing Lemmas}

% \YX{Put the two facts below to the appendix.}
 
For $L$-smoothness function $f_i$, it holds for any $\vx, \vy\in\dom(r)$, %that . For $ i = 1,\ldots, n, \forall\, \vx, \vy \in \dom(r),$
\begin{align}\label{eq:assump-to-f_i}
\textstyle \big|f_i(\vy) - f_i(\vx) - \langle \nabla f_i(\vx), \vy-\vx\rangle\big| \le \frac{L}{2}\|\vy-\vx\|^2.
\end{align} 

From the smoothness of $f_i$ in Assumption \ref{assu:prob}, it follows that $f = \frac{1}{n}f_i$ is also $ L$-smooth in $\dom(r)$.

When $f_i$ is $ L$-smooth in $\dom(r)$, we have that $f_i(\cdot) + \frac{L}{2}\|\cdot\|^2$ is convex.  
Since $r(\cdot)$ is convex, $\phi_i(\cdot) + \frac{L}{2}\|\cdot\|^2$ is convex, i.e., $\phi_i$ is $L$-weakly convex for each $i$. So is $\phi$. In the following, we give some lemmas about weakly convex functions.

The following result is from Lemma II.1 in \cite{chen2021distributed}.
 \begin{lemma}\label{lem:weak_convx}
 For any function $\psi$ on $\bbR^{d}$, if it is $L$-weakly convex, i.e., $\psi(\cdot) + \frac{L}{2}\|\cdot\|^2$ is convex, then for any $\vx_1, \vx_2, \ldots, \vx_m\in\bbR^d$, it holds that 
\[
\psi\left(\sum_{i=1}^m a_i\vx_i\right)\leq \sum_{i=1}^m a_i \psi(\vx_i) + \frac{L}{2} \sum_{i=1}^{m-1} \sum_{j=i+1}^m a_i a_j \|\vx_i-\vx_j\|^2,
\]
where $a_i\geq 0$ for all $i$ and $\sum_{i=1}^m a_i=1$.
\end{lemma}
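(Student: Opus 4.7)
The plan is to reduce to Jensen's inequality for a convex function and then simplify the quadratic remainder by a standard algebraic identity. Define $g(\vx) \triangleq \psi(\vx) + \frac{L}{2}\|\vx\|^2$, which is convex by the definition of $L$-weak convexity. Since $\{a_i\}$ are nonnegative with $\sum_i a_i = 1$, Jensen's inequality applied to $g$ immediately yields
\[
\psi\!\left(\textstyle\sum_{i=1}^m a_i \vx_i\right) + \frac{L}{2}\left\|\textstyle\sum_{i=1}^m a_i \vx_i\right\|^2 \;\le\; \sum_{i=1}^m a_i \psi(\vx_i) + \frac{L}{2}\sum_{i=1}^m a_i \|\vx_i\|^2.
\]

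Rearranging the inequality reduces the task to showing
\[
\sum_{i=1}^m a_i \|\vx_i\|^2 - \left\|\textstyle\sum_{i=1}^m a_i \vx_i\right\|^2 \;=\; \sum_{i=1}^{m-1}\sum_{j=i+1}^m a_i a_j \|\vx_i - \vx_j\|^2.
\]
This is a routine calculation: expand $\|\vx_i - \vx_j\|^2 = \|\vx_i\|^2 + \|\vx_j\|^2 - 2\langle \vx_i, \vx_j\rangle$, sum $a_i a_j$ weights over all ordered pairs $(i,j)$, and use $\sum_i a_i = 1$. The result is $2\sum_i a_i\|\vx_i\|^2 - 2\|\sum_i a_i\vx_i\|^2$ for the fully summed (ordered) version, so halving gives exactly the unordered sum $\sum_{i<j} a_i a_j \|\vx_i - \vx_j\|^2$. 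Combining this identity with the rearranged Jensen inequality above gives the desired bound.

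No real obstacle appears here; the only substantive step is recognizing that the ``defect'' in Jensen's inequality for $\tfrac{L}{2}\|\cdot\|^2$ is precisely the weighted pairwise-distance sum. I would present the proof as two short steps (convexify, then expand), keeping the computation self-contained so that later appeals to this lemma in the main Lyapunov analysis can simply cite the bound without further manipulation.
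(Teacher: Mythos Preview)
Your proof is correct and follows the standard approach: apply Jensen's inequality to the convex function $\psi(\cdot)+\tfrac{L}{2}\|\cdot\|^2$ and then use the variance-type identity $\sum_i a_i\|\vx_i\|^2-\|\sum_i a_i\vx_i\|^2=\sum_{i<j}a_ia_j\|\vx_i-\vx_j\|^2$. The paper itself does not give a proof of this lemma but simply cites it from \cite{chen2021distributed}, so there is nothing to compare beyond noting that your argument is exactly the expected one.
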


  The first result below is from Lemma II.8 in \cite{chen2021distributed}, and the nonexpansiveness of the proximal mapping of a closed convex function is well known. 
\begin{lemma} \label{lem:prox_diff} 
For any function $\psi$ on $\bbR^{d}$, if it is $L$-weakly convex, i.e., $\psi(\cdot) + \frac{L}{2}\|\cdot\|^2$ is convex, then the proximal mapping with $\lambda< \frac{1}{L}$ satisfies 
\[
\|\prox_{\lambda \psi}(\vx_1)-\prox_{\lambda \psi}(\vx_2)\|\leq \frac{1}{1-\lambda  L} \|\vx_1-\vx_2\|.
\]
For a closed convex function $r(\cdot)$,  its proximal mapping is nonexpansive, i.e., 
\[
\|\prox_{r}(\vx_1)-\prox_{r}(\vx_2)\|\leq  \|\vx_1-\vx_2\|.
\]
\end{lemma}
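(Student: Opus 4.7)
The plan is to reduce both inequalities to the monotonicity of the subdifferential of a convex function; the weakly convex case is handled by absorbing the quadratic from the weak-convexity constant into the objective so that the proximal mapping becomes one for a convex function.

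For the second (unweighted) bound, I would use the textbook argument for the proximal operator of a closed convex function. Let $\vy_i := \prox_r(\vx_i)$ for $i=1,2$. The first-order optimality condition of the proximal subproblem gives the inclusion $\vx_i - \vy_i \in \partial r(\vy_i)$, and monotonicity of $\partial r$ yields
\[
\langle (\vx_1-\vy_1)-(\vx_2-\vy_2),\, \vy_1-\vy_2\rangle \geq 0,
\]
which rearranges to $\langle \vx_1-\vx_2, \vy_1-\vy_2\rangle \geq \|\vy_1-\vy_2\|^2$. Cauchy-Schwarz then delivers $\|\vy_1-\vy_2\|\leq\|\vx_1-\vx_2\|$.

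For the first bound, I would define $g(\vy) := \psi(\vy) + \frac{L}{2}\|\vy\|^2$, which is convex by hypothesis. Completing the square gives
\[
\psi(\vy) + \frac{1}{2\lambda}\|\vy-\vx\|^2 = g(\vy) + \frac{1-\lambda L}{2\lambda}\|\vy\|^2 - \frac{1}{\lambda}\langle \vx, \vy\rangle + \frac{1}{2\lambda}\|\vx\|^2.
\]
Because $\lambda<1/L$, the quadratic coefficient $(1-\lambda L)/(2\lambda)$ is strictly positive, so the problem is strongly convex and $\vy_i := \prox_{\lambda\psi}(\vx_i)$ is the unique minimizer, characterized by the inclusion $\frac{1}{\lambda}\vx_i - \frac{1-\lambda L}{\lambda}\vy_i \in \partial g(\vy_i)$. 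Picking subgradients $\vg_i\in\partial g(\vy_i)$ realizing this inclusion and invoking monotonicity of $\partial g$ gives
\[
0 \leq \langle \vg_1-\vg_2, \vy_1-\vy_2\rangle = \frac{1}{\lambda}\langle \vx_1-\vx_2, \vy_1-\vy_2\rangle - \frac{1-\lambda L}{\lambda}\|\vy_1-\vy_2\|^2.
\]
Hence $(1-\lambda L)\|\vy_1-\vy_2\|^2 \leq \langle \vx_1-\vx_2, \vy_1-\vy_2\rangle \leq \|\vx_1-\vx_2\|\|\vy_1-\vy_2\|$, which yields the advertised constant $1/(1-\lambda L)$.

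I do not anticipate a genuine obstacle; the only mild point to verify is the subdifferential calculus rule $\partial(\psi+\frac{L}{2}\|\cdot\|^2) = \partial\psi + L\,\mathrm{Id}$, which holds because $\frac{L}{2}\|\cdot\|^2$ is smooth, so the optimality inclusion makes sense without requiring smoothness of $\psi$ itself. Everything else is a one-line reduction to firm nonexpansiveness in the convex setting.
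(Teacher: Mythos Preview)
Your argument is correct: both parts follow cleanly from monotonicity of the subdifferential of a convex function, and the completing-the-square reduction in the weakly convex case is exactly the right move. The paper does not actually prove this lemma---it cites the first inequality to Lemma~II.8 of \cite{chen2021distributed} and calls the second ``well known''---so your self-contained derivation simply fills in what the paper leaves to references.
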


\begin{lemma}
For $\mathrm{DProxSGT}$ in Algorithm \ref{alg:DProxSGT} and $\mathrm{CDProxSGT}$ in Algorithm \ref{alg:CDProxSGT}, we both have
\begin{gather}
	\bar\vy^t =\overline{\nabla}  \bF^t, \quad
	\bar\vx^{t} = \bar\vx^{t+\frac{1}{2}} = \frac{1}{n} \sum_{i=1}^n  \prox_{\eta r}\left(\vx_i^t - \eta \vy_i^{t}\right). \label{eq:x_y_mean}
\end{gather}  
\end{lemma}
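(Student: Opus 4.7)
The plan is to prove both equalities by taking column-wise averages of the updates in Algorithms~\ref{alg:DProxSGT} and \ref{alg:CDProxSGT} and exploiting that $\W$ is doubly stochastic. The key observation I will repeatedly use is that for any matrix $\A\in\bbR^{d\times n}$ and any doubly stochastic $\W$, we have $\tfrac{1}{n}(\A\W)\1 = \tfrac{1}{n}\A(\W\1)=\tfrac{1}{n}\A\1$, i.e.\ mixing by $\W$ preserves the column-average. I will also use the initialization $\vy_i^{-1}=\0$ and $\nabla F_i(\vx_i^{-1},\xi_i^{-1})=\0$ in both algorithms, which gives the base case $\bar\vy^{-1}=\overline{\nabla}\bF^{-1}=\0$.

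For the first equality $\bar\vy^t = \overline{\nabla}\bF^t$, I will proceed by induction on $t$. For DProxSGT, averaging \eqref{eq:y_half_update} gives $\bar\vy^{t-\frac12} = \bar\vy^{t-1}+\overline{\nabla}\bF^t-\overline{\nabla}\bF^{t-1}$, and averaging \eqref{eq:y_update} (written in matrix form as $\Y^t=\Y^{t-\frac12}\W$) gives $\bar\vy^t=\bar\vy^{t-\frac12}$. Combining these with the inductive hypothesis $\bar\vy^{t-1}=\overline{\nabla}\bF^{t-1}$ yields the claim. For CDProxSGT, \eqref{eq:alg3_1} is unchanged, so again $\bar\vy^{t-\frac12}=\bar\vy^{t-1}+\overline{\nabla}\bF^t-\overline{\nabla}\bF^{t-1}$. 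The only new ingredient is that averaging \eqref{eq:alg3_3} produces the correction term
\begin{equation*}
\tfrac{\gamma_y}{n}\Big(\textstyle\sum_{i=1}^n\sum_{j=1}^n \W_{ji}\underline{\vy}_j^t - \sum_{i=1}^n \underline{\vy}_i^t\Big) = \tfrac{\gamma_y}{n}\Big(\sum_{j=1}^n \underline{\vy}_j^t\sum_{i=1}^n \W_{ji} - \sum_{i=1}^n \underline{\vy}_i^t\Big) = 0,
\end{equation*}
where I used $\sum_i \W_{ji}=1$ (from $\W\1=\1$). Hence $\bar\vy^t=\bar\vy^{t-\frac12}$, and the induction closes identically.

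For the second claim, I will directly average the proximal/mixing steps. In both algorithms, $\vx_i^{t+\frac12}=\prox_{\eta r}(\vx_i^t-\eta\vy_i^t)$, so by definition
\begin{equation*}
\bar\vx^{t+\frac12} = \tfrac{1}{n}\textstyle\sum_{i=1}^n \prox_{\eta r}(\vx_i^t-\eta\vy_i^t).
\end{equation*}
It remains to verify $\bar\vx^{t+1}=\bar\vx^{t+\frac12}$ (which, re-indexed, also gives $\bar\vx^t=\bar\vx^{t-\frac12}$ at the previous step, and together with the initial consensus $\vx_i^0=\vx^0$ yields $\bar\vx^t=\bar\vx^{t+\frac12}$ in the combined form stated). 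For DProxSGT this is immediate from \eqref{eq:x_1_update}: writing $\X^{t+1}=\X^{t+\frac12}\W$ and using double stochasticity. For CDProxSGT, averaging \eqref{eq:alg3_6} gives $\bar\vx^{t+1}=\bar\vx^{t+\frac12}+\tfrac{\gamma_x}{n}\big(\sum_j \underline{\vx}_j^{t+1}\sum_i \W_{ji} - \sum_i \underline{\vx}_i^{t+1}\big)$, and the correction term vanishes by the same computation as above.

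There is no real obstacle here; the lemma is an easy consequence of double stochasticity and the chosen initialization. The only mild subtlety worth stating carefully in the proof is that, in CDProxSGT, the compressed auxiliary vectors $\underline{\vy}_i^t$ and $\underline{\vx}_i^{t+1}$ do not need to have any particular average for the result to hold; the corrections in \eqref{eq:alg3_3} and \eqref{eq:alg3_6} average to zero regardless. I will present the argument in the order (i) base case from initialization, (ii) induction step for $\bar\vy^t=\overline{\nabla}\bF^t$ handling both algorithms in parallel, (iii) averaging of the proximal/mixing $\vx$-updates to obtain $\bar\vx^t=\bar\vx^{t+\frac12}$ and the explicit formula.
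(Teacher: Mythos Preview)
Your approach is correct and essentially identical to the paper's: both argue by averaging the per-node updates, using $\1^\top\W=\1^\top$ to see that mixing preserves column averages, and closing the induction on $\bar\vy^t$ with the initialization $\bar\vy^{-1}=\overline{\nabla}\bF^{-1}=\0$.

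One small slip: your parenthetical claim that the initial consensus $\vx_i^0=\vx^0$ lets you pass from $\bar\vx^{t+1}=\bar\vx^{t+\frac12}$ to the stated $\bar\vx^{t}=\bar\vx^{t+\frac12}$ does not work. Take $r\equiv 0$: then $\vx_i^{t+\frac12}=\vx_i^t-\eta\vy_i^t$ and hence $\bar\vx^{t+\frac12}=\bar\vx^t-\eta\,\overline{\nabla}\bF^t\neq\bar\vx^t$ in general, regardless of the initialization. This is in fact a typo in the lemma statement that the paper's own proof simply repeats; what averaging \eqref{eq:x_1_update} (resp.\ \eqref{eq:alg3_6}) actually yields, and what suffices everywhere the lemma is invoked, is $\bar\vx^{t+1}=\bar\vx^{t+\frac12}$. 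So drop that parenthetical and record the relation with the correct index; the rest of your argument stands as is.
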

\begin{proof}
For DProxSGT in Algorithm \ref{alg:DProxSGT}, taking the average among the workers on \eqref{eq:y_half_update} to \eqref{eq:x_1_update} gives 
\begin{align}
\bar\vy^{t-\frac{1}{2}} = \bar\vy^{t-1} + \overline{\nabla}  \bF^t - \overline{\nabla}  \bF^{t-1}, \quad
    \bar\vy^t =\bar\vy^{t-\frac{1}{2}},  \quad
    \bar\vx^{t+\frac{1}{2}} = \frac{1}{n} \sum_{i=1}^n  \prox_{\eta r}\left(\vx_i^t - \eta \vy_i^{t}\right), \quad  \bar\vx^{t} = \bar\vx^{t+\frac{1}{2}},\label{eq:proof_mean}
\end{align}
where $\1^\top\W=\1^\top$ follows from Assumption \ref{assu:mix_matrix}. With $\bar\vy^{-1}=\overline{\nabla} \bF^{-1}$, we have \eqref{eq:x_y_mean}.

Similarly, for CDProxSGT in Algorithm \ref{alg:CDProxSGT}, 
taking the average on  \eqref{eq:alg3_1_matrix} to \eqref{eq:alg3_6_matrix} 
will also give \eqref{eq:proof_mean} and \eqref{eq:x_y_mean}.
\end{proof}

% \YX{Re-state the definition of Moreau envelope and the proximal mapping in the supplementary material.}
In the rest of the analysis, we define the Moreau envelope of $\phi$ for $\lambda\in(0,\frac{1}{L})$ as \begin{align*}
\phi_\lambda(\vx) = \min_\vy\left\{\phi(\vy) + \frac{1}{2\lambda}\|\vy-\vx\|^2\right\}.  
\end{align*}
Denote the minimizer as 
 \begin{align*}
 \prox_{\lambda \phi}(\vx):= \argmin_{\vy} \phi(\vy)+\frac{1}{2\lambda} \|\vy-\vx\|^2.
 \end{align*}
In addition, we will use the notation $\widehat{\vx}^t_i$ and $\widehat{\vx}^{t+\frac{1}{2}}_i$ that are defined by
\begin{align}
\widehat{\vx}^t_i = \prox_{\lambda \phi}(\vx^t_i),\ \widehat{\vx}^{t+\frac{1}{2}}_i = \prox_{\lambda \phi}(\vx^{t+\frac{1}{2}}_i),\, \forall\, i\in\hN,
\label{eq:x_t_hat} 
\end{align}
where $\lambda \in(0,\frac{1}{L})$.

\section{Convergence Analysis for DProxSGT} \label{sec:proof_DProxSGT}
% \YX{State that ``In this section, we analyze the convergence rate of DProxSGT.'' Give the roadmap of the analysis.}

In this section, we analyze the convergence rate of DProxSGT in Algorithm \ref{alg:DProxSGT}. For better readability, we use the matrix form of Algorithm \ref{alg:DProxSGT}. By the notation introduced in section~\ref{sec:notation}, we can write \eqref{eq:y_half_update}-\eqref{eq:x_1_update} in the more compact matrix form:
\begin{align}
 &   \Y^{t-\frac{1}{2}} =  \Y^{t-1}  +  \nabla \bF^t - \nabla \bF^{t-1},\label{eq:y_half_update_matrix}
    \\
 &   \Y^t =  \Y^{t-\frac{1}{2}}\W,\label{eq:y_update_matrix}\\
 &   \X^{t+\frac{1}{2}} =\prox_{\eta r} \left(\X^t - \eta \Y^{t}\right) \triangleq [\prox_{\eta r} \left(\vx_1^t - \eta \vy_1^{t}\right),\ldots,\prox_{\eta r} \left(\vx_n^t - \eta \vy_n^{t}\right)],\label{eq:x_half_update_matrix}
    \\
 &   \X^{t+1} = \X^{t+\frac{1}{2}}\W. \label{eq:x_1_update_matrix}
\end{align}   
% In \eqref{eq:x_half_update_matrix}, $\prox_{\eta r}$  is applied to each column separately, i.e., $\X^{t+\frac{1}{2}}=[\prox_{\eta r} \left(\vx_1^t - \eta \vy_1^{t}\right),\ldots,\prox_{\eta r} \left(\vx_n^t - \eta \vy_n^{t}\right)]$.

Below, we first bound $\|\widehat\X^{t}-\X^{t+\frac{1}{2}}\|^2$ in Lemma~\ref{lem:Xhat_Xhalf}. Then we give the bounds of the consensus error $\|\X_\perp^t\|$ and $\|\Y_\perp^t\|$ and $\phi_\lambda(\vx_i^{t+1})$ after one step in Lemmas~\ref{lem:XI_J}, \ref{lem:YI_J}, and \ref{lem:weak_convex}. Finally, we prove Theorem \ref{thm:sec2} by constructing a Lyapunov function that involves $\|\X_\perp^t\|$, $\|\Y_\perp^t\|$, and $\phi_\lambda(\vx_i^{t+1})$.

% We can prove the $\epsilon$-stationary point  from the bound of $\|\widehat\X^t-\X^t\|$ by Lemma \ref{lem:xhat_x}. 
% In order to have the bound of $\|\widehat\X^t-\X^t\|$, we define a Lyapunov function of consensus errors  $\|\X_\perp\|$, $\|\Y_\perp\|$, and $\bbE[\phi_\lambda(\vx_i^{t+1})]$. These terms are bounded in Lemmas \ref{lem:XI_J}, \ref{lem:YI_J}, and \ref{lem:weak_convex} separately. Before that, we first bound $\bbE\big[\|\widehat\X^{t}-\X^{t+\frac{1}{2}}\|^2\big]$ in Lemma \ref{lem:Xhat_Xhalf}.
 
% \YX{How about define $\X_\perp = \X(\I - \J)$ to save space and simply the notation?} 
\begin{lemma} \label{lem:Xhat_Xhalf} Let  $\eta\leq  \lambda \leq \frac{1}{4 L}$. Then
\begin{align}
 \bbE\big[\|\widehat\X^{t}-\X^{t+\frac{1}{2}}\|^2\big]  \leq  &~
4 \bbE\big[\|\X^t_\perp\|^2\big]   +  \left( 1-\frac{\eta}{2\lambda} \right) \bbE\big[\| \widehat\X^t - \X^t\|^2\big] +4\eta^2 \bbE\big[\|\Y^t_\perp\|^2\big] + 2\eta^2\sigma^2.  \label{eq:hatx_xprox}
\end{align} 
\end{lemma}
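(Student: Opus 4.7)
The plan is to bound $\|\widehat\vx_i^t - \vx_i^{t+\frac{1}{2}}\|^2$ pointwise for each worker $i$ and then sum over $i$. Three ingredients drive the argument: a fixed-point identity for $\widehat\vx_i^t$, the nonexpansiveness of $\prox_{\eta r}$ from Lemma~\ref{lem:prox_diff}, and a four-way decomposition of the tracking residual $\nabla f(\widehat\vx_i^t) - \vy_i^t$.

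First I will exploit the Moreau-envelope optimality. Since $\widehat\vx_i^t$ minimises $\phi(\vy) + \tfrac{1}{2\lambda}\|\vy - \vx_i^t\|^2$, its first-order condition $0 \in \nabla f(\widehat\vx_i^t) + \partial r(\widehat\vx_i^t) + \tfrac{1}{\lambda}(\widehat\vx_i^t - \vx_i^t)$ rewrites, for any $\eta>0$, as the fixed-point identity
$$\widehat\vx_i^t \;=\; \prox_{\eta r}\bigl(\widehat\vx_i^t - \eta\nabla f(\widehat\vx_i^t) - \tfrac{\eta}{\lambda}(\widehat\vx_i^t - \vx_i^t)\bigr).$$
Combined with $\vx_i^{t+\frac{1}{2}} = \prox_{\eta r}(\vx_i^t - \eta\vy_i^t)$ and nonexpansiveness, this gives
$$\|\widehat\vx_i^t - \vx_i^{t+\frac{1}{2}}\| \;\le\; \bigl\|(1-\eta/\lambda)(\vx_i^t - \widehat\vx_i^t) + \eta(\nabla f(\widehat\vx_i^t) - \vy_i^t)\bigr\|.$$

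Next I will apply the weighted Young's inequality $\|u+v\|^2 \le (1+c)\|u\|^2 + (1+c^{-1})\|v\|^2$ with $c = \eta/(2\lambda)$. Because $\eta \le \lambda$ forces $(1-\eta/\lambda)^2 \le 1-\eta/\lambda$, this choice makes $(1+c)(1-\eta/\lambda)^2 \le 1-\eta/(2\lambda)$, delivering the advertised contraction on $\|\widehat\vx_i^t - \vx_i^t\|^2$. The gradient-error term then carries an $O(\eta\lambda)$ prefactor, so I would next split
$$\nabla f(\widehat\vx_i^t) - \vy_i^t = \underbrace{(\nabla f(\widehat\vx_i^t) - \nabla f(\vx_i^t))}_{\text{prox gap}} + \underbrace{(\nabla f(\vx_i^t) - \overline{\nabla f}^t)}_{\text{consensus}} + \underbrace{(\overline{\nabla f}^t - \overline{\nabla\bF}^t)}_{\text{noise}} + \underbrace{(\bar\vy^t - \vy_i^t)}_{\text{tracking}},$$
using $\bar\vy^t = \overline{\nabla\bF}^t$ from \eqref{eq:x_y_mean}. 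Summing squared norms over $i$ and taking expectation bounds these pieces by $L^2\|\widehat\X^t - \X^t\|^2$ (smoothness), $2L^2\|\X_\perp^t\|^2$ (Jensen, $L$-smoothness of each $f_j$, and $\sum_{i,j}\|\vx_i-\vx_j\|^2 = 2n\|\X_\perp\|^2$), $\sigma^2$ (cross-worker independence and Assumption~\ref{assu:stoc_grad}), and $\|\Y_\perp^t\|^2$ respectively. The prox-gap contribution, multiplied by the $O(\eta\lambda)$ prefactor, is absorbed into the $\eta/(2\lambda)$ slack of the contraction thanks to $\lambda \le 1/(4L)$.

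The main obstacle will be matching the exact constants $4$, $1-\eta/(2\lambda)$, $4\eta^2$, $2\eta^2$ without an $L^2$ dependence creeping into the $\|\X_\perp^t\|^2$, $\|\Y_\perp^t\|^2$ or $\sigma^2$ coefficients. I expect the cleanest route is to first insert the mean-centred intermediate $\vz^{t+\frac{1}{2}} = \prox_{\eta r}(\bar\vx^t - \eta\bar\vy^t)$ and apply $\|a+b\|^2 \le 2\|a\|^2 + 2\|b\|^2$ twice, stripping off $4\|\X_\perp^t\|^2 + 4\eta^2\|\Y_\perp^t\|^2$ directly through nonexpansiveness of $\prox_{\eta r}$ (the two factors of $2$ multiplying to $4$). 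The remaining $\sum_i\|\widehat\vx_i^t - \vz^{t+\frac{1}{2}}\|^2$ is then handled by the fixed-point/Young's argument above, and the $2\eta^2\sigma^2$ term appears from the further subsplit $\mathbb{E}_t\|\vz^{t+\frac{1}{2}} - \prox_{\eta r}(\bar\vx^t - \eta\overline{\nabla f}^t)\|^2 \le \eta^2\sigma^2/n$ (independence of samples across the $n$ workers) multiplied by $2$ from the outer Young's step.
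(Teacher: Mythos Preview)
Your ingredients are right---the fixed-point identity for $\widehat\vx_i^t$, nonexpansiveness of $\prox_{\eta r}$, the split of $\nabla f(\widehat\vx_i^t)-\vy_i^t$ through $\bar\vy^t=\overline{\nabla}\bF^t$---but the way you combine $(1-\eta/\lambda)(\widehat\vx_i^t-\vx_i^t)$ and $\eta(\nabla f(\widehat\vx_i^t)-\vy_i^t)$ via Young's inequality cannot deliver the stated constants. With $c=\eta/(2\lambda)$ the gradient-error block carries the prefactor $(1+2\lambda/\eta)\eta^2=\eta^2+2\lambda\eta=\Theta(\lambda\eta)$, not $\Theta(\eta^2)$. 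Consequently: (i) the coefficients you would obtain on $\|\Y_\perp^t\|^2$ and $\sigma^2$ are $\Theta(\lambda\eta)$, which cannot be reduced to $4\eta^2$ and $2\eta^2$; (ii) the prox-gap piece $L^2\|\widehat\X^t-\X^t\|^2$ then contributes up to $12\lambda\eta L^2\le 3\eta/(4\lambda)$ to the $\|\widehat\X^t-\X^t\|^2$ coefficient, and under only $\eta\le\lambda\le1/(4L)$ this drives the total coefficient above~$1$ (e.g.\ at $\eta=\lambda$ even the exact value $(1+a/2)(1-a)^2+12\lambda\eta L^2$ with $a=\eta/\lambda=1$ gives $0+3/4>1-1/2$). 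Your alternative route through $\vz^{t+\frac12}=\prox_{\eta r}(\bar\vx^t-\eta\bar\vy^t)$ drops the leading factor $2$ from the first split: the remainder is $2\sum_i\|\widehat\vx_i^t-\vz^{t+\frac12}\|^2$, not $\sum_i\|\widehat\vx_i^t-\vz^{t+\frac12}\|^2$, and that factor again destroys the contraction.

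The paper avoids Young's inequality at this step: it \emph{expands the square exactly} and handles the cross term $2(1-\eta/\lambda)\eta\sum_i\langle\widehat\vx_i^t-\vx_i^t,\ \vy_i^t-\nabla f(\vx_i^t)\rangle$ directly. Two observations you are missing make the constants come out: first, conditioning on the past kills the stochastic-noise part of the cross term (since $\widehat\vx_i^t-\vx_i^t$ is $\Xi^t$-independent), so $\sigma^2$ enters only from the pure square $\eta^2\|\vy_i^t-\nabla f(\widehat\vx_i^t)\|^2$, yielding exactly $2\eta^2\sigma^2$; second, because $\sum_i(\vy_i^t-\bar\vy^t)=0$, one may \emph{center} the left factor and write $\sum_i\langle\widehat\vx_i^t-\vx_i^t,\vy_i^t-\bar\vy^t\rangle=\sum_i\langle\widehat\vx_i^t-\bar{\widehat\vx}^t,\vy_i^t-\bar\vy^t\rangle+\sum_i\langle\bar\vx^t-\vx_i^t,\vy_i^t-\bar\vy^t\rangle$, then apply Cauchy--Schwarz with weights $(\tfrac{1}{2\eta},\eta)$. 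Multiplied by the outer $2(1-\eta/\lambda)\eta$, this produces $O(1)$ coefficients on $\|\X_\perp^t\|^2$ (via $\|\widehat\X_\perp^t\|^2\le(1-\lambda L)^{-2}\|\X_\perp^t\|^2$) and $O(\eta^2)$ on $\|\Y_\perp^t\|^2$, which is precisely how the $4$ and $4\eta^2$ arise.
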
 
\begin{proof}
By the definition of $\widehat\vx^t_i$ in \eqref{eq:x_t_hat}, we have $0  \in  \nabla f(\widehat\vx^t_i) + \partial r(\widehat\vx^t_i) + \frac{1}{\lambda}(\widehat\vx^t_i-\vx^t_i)$, i.e.,
\[ \textstyle
0 \in \partial r(\widehat\vx^t_i) + \frac{1}{\eta} \left(\frac{\eta}{\lambda} \widehat\vx^t_i-\frac{\eta}{\lambda}\vx^t_i + \eta  \nabla f(\widehat\vx^t_i)  \right) = \partial r(\widehat\vx^t_i) + \frac{1}{\eta} \left(\widehat\vx^t_i - \left( \frac{\eta}{\lambda}\vx^t_i - \eta  \nabla f(\widehat\vx^t_i)+ \left(1- \frac{\eta}{\lambda}\right) \widehat\vx^t_i  \right)\right).
\]
Thus we have $\widehat\vx^t_i = \prox_{\eta r}\left(  \frac{\eta}{\lambda}\vx^t_i - \eta  \nabla f(\widehat\vx^t_i) + \left(1- \frac{\eta}{\lambda}\right)\widehat\vx^t_i\right)$. Then by \eqref{eq:x_half_update}, the convexity of $r$, and Lemma \ref{lem:prox_diff}, % \YX{use big () or left( right) pairs for many fractions, or textstyle to make those fractions shorter.}
\begin{align}
 &~  \textstyle \|\widehat\vx_i^{t}-\vx_i^{t+\frac{1}{2}}\|^2
 = \left\| \prox_{\eta r}\left( \frac{\eta}{\lambda}\vx^t_i - \eta  \nabla f(\widehat\vx^t_i) + \left(1- \frac{\eta}{\lambda}\right)\widehat\vx^t_i\right)- \prox_{\eta r} \left(
 \vx_i^t - \eta \vy^t_i\right) \right\|^2 \nonumber\\
 \leq &~    \textstyle \left\| \frac{\eta}{\lambda}\vx^t_i - \eta  \nabla f(\widehat\vx^t_i) + \left(1- \frac{\eta}{\lambda}\right)\widehat\vx^t_i - (\vx^t_i-\eta\vy^t_i) \right\|^2  = \left\| \left(1- \frac{\eta}{\lambda}\right)(\widehat\vx^t_i -\vx^t_i )- \eta  (\nabla f(\widehat\vx^t_i) -\vy^t_i) \right\|^2 \nonumber\\
 = & ~ \textstyle  \left(1- \frac{\eta}{\lambda}\right)^2  \left\| \widehat\vx^t_i - \vx^t_i \right\|^2 + \eta^2\left\| \vy^t_i- \nabla f(\widehat\vx^t_i) \right\|^2 + 2  \left(1- \frac{\eta}{\lambda}\right)\eta \left\langle \widehat\vx^t_i-\vx_i^t, \vy_i^t-\nabla f(\vx^t_i) + \nabla f(\vx^t_i)-\nabla f(\widehat\vx^t_i) \right\rangle\nonumber\\
\leq & ~ \textstyle\left(\left(1- \frac{\eta}{\lambda}\right)^2 + 2\left(1- \frac{\eta}{\lambda}\right)\eta  L
\right) \left\| \widehat\vx^t_i - \vx^t_i \right\|^2 + \eta^2\left\| \vy^t_i- \nabla f(\widehat\vx^t_i) \right\|^2 + 2  \left(1- \frac{\eta}{\lambda}\right)\eta \left\langle \widehat\vx^t_i-\vx_i^t, \vy_i^t-\nabla f(\vx^t_i) \right\rangle , \label{eq:lem1.6.1} 
 \end{align}
where %the first inequality holds because of the convexity of $r(\cdot)$ and Lemma \ref{lem:prox_diff}, 
the second inequality holds by $\left\langle \widehat\vx^t_i-\vx_i^t, \nabla f(\vx^t_i)-\nabla f(\widehat\vx^t_i) \right\rangle \leq  L\left\|\widehat\vx^t_i-\vx_i^t\right\|^2$. 
The second term in the right hand side of \eqref{eq:lem1.6.1} can be bounded by
\begin{align*}
&~  \textstyle \bbE_t [\| \vy^t_i- \nabla f(\widehat\vx^t_i) \|^2\big]  \overset{\eqref{eq:x_y_mean}}{=} \bbE_t\big[\| \vy^t_i- \bar\vy^t + \overline{\nabla} \bF^t  - \nabla f(\widehat\vx^t_i) \|^2\big]  
\leq   2\bbE_t\big[\| \vy^t_i- \bar\vy^t \|^2\big] + 2\bbE_t\big[\big\| \overline{\nabla} \bF^t - \nabla f(\widehat\vx^t_i) \big\|^2\big] \\
= &~2\bbE_t\big[\| \vy^t_i- \bar\vy^t \|^2\big] + 2\bbE_t\big[\| \overline{\nabla} \bF^t - \overline{\nabla} \f^t \|^2\big]+ 2\| \overline{\nabla} \f^t - \nabla f(\widehat\vx^t_i) \|^2  \\
\leq&~ 2\bbE_t[ \|\vy_i^t-\bar\vy^t\|^2\big] +  \frac{2}{n^2}\sum_{j=1}^n \bbE_t\big[\|\nabla F_j(\vx_j^t,\xi_j^t)-\nabla f_j(\vx_j^t)\|^2\big]  + 4 \| \overline{\nabla} \f^t -\nabla f(\vx^t_i) \|^2 + 4\| \nabla f(\vx^t_i)- \nabla f(\widehat\vx^t_i) \|^2\\
\leq &~ 2\bbE_t[ \|\vy_i^t-\bar\vy^t\|^2\big] + 2 \frac{\sigma^2}{n} + 4 \| \overline{\nabla} \f^t -\nabla f(\vx^t_i) \|^2  + 4  L^2 \|\vx^t_i -\widehat\vx^t_i\|^2,
\end{align*}
where the second equality holds by the unbiasedness of stochastic gradients, and the second inequality holds also by the independence between $\xi_i^t$'s. 
% {\color{blue} for the last term, why $ L$? you used lip,right? not weakly convexity? Yes, we assume it is $L$-smooth and thus weakly convexity.}
In the last inequality, we use the bound of the variance of stochastic gradients, and the $L$-smooth assumption.
Taking the full expectation over the above inequality and summing for all $i$ give
\begin{align}
\sum_{i=1}^n\bbE\big[\| \vy^t_i- \nabla f(\widehat\vx^t_i) \|^2 ] 
\leq 2\bbE\big[\|\Y^t_\perp\|^2 ] +2\sigma^2 +  8 L^2  \bbE\big[\|\X^t_\perp\|^2 ]  +4 L^2 \bbE\big[\| \X^t -  \widehat\X^t\|^2 ]. \label{eq:lem161_1}
\end{align}
% {\color{blue}also check lip constant.}  
To have the inequality above, we have used
\begin{align}
&~ \sum_{i=1}^n \left\| \overline{\nabla} \f^t -\nabla f(\vx^t_i) \right\|^2  %= \sum_{i=1}^n\| \frac{1}{n}\sum_{j=1}^n (\nabla f_j(\vx_j^t) -\nabla f_j(\vx^t_i)) \|^2
\leq  \frac{1}{n} \sum_{i=1}^n \sum_{j=1}^n  \left\|\nabla f_j(\vx_j^t) -\nabla f_j(\vx^t_i) \right\|^2 
\leq   \frac{ L^2}{n}\sum_{i=1}^n\sum_{j=1}^n  
\left\|\vx_j^t - \vx^t_i \right\|^2  \nonumber \\
= &~ \frac{ L^2}{n}\sum_{i=1}^n\sum_{j=1}^n \left(  \left\|\vx_j^t - \bar\vx^t
\right\|^2 +\left\|\bar\vx^t-\vx^t_i \right\|^2  + 2\left\langle \vx_j^t - \bar\vx^t, \bar\vx^t-\vx^t_i\right\rangle\right) % \nonumber \\
% = &~ L^2\left(  \frac{1}{n}\sum_{i=1}^n \|\X^t_\perp\|^2 +  \frac{1}{n} \sum_{j=1}^n \|\X^t_\perp\|^2  +  \frac{1}{n}\sum_{i=1}^n\sum_{j=1}^n \langle \vx_j^t - \bar\vx^t, \bar\vx^t-\vx^t_i\rangle \right) \\
= %&~
2 L^2 \left\|\X^t_\perp\right\|^2, \label{eq:sumsum}
\end{align}
% {\color{blue} explain more one step by the defintion of $\bar{x}$ so that the cross term is 0. explanation is added here.} 
where the last equality holds by $ \frac{1}{n} \sum_{i=1}^n\sum_{j=1}^n \left\langle \vx_j^t - \bar\vx^t, \bar\vx^t-\vx^t_i\right\rangle =  \sum_{i=1}^n \left\langle \frac{1}{n} \sum_{j=1}^n (\vx_j^t - \bar\vx^t), \bar\vx^t-\vx^t_i\right\rangle =\sum_{i=1}^n\left\langle   \bar\vx^t - \bar\vx^t, \bar\vx^t-\vx^t_i\right\rangle=0$ from the definition of $\bar\vx$. 

About the third term in the right hand side of \eqref{eq:lem1.6.1}, we have
\begin{align}
 & ~\sum_{i=1}^n \bbE\left[ \left\langle \widehat\vx^t_i-\vx_i^t, \vy_i^t-\nabla f(\vx^t_i) \right\rangle\right] \overset{\eqref{eq:x_y_mean}}{=} \sum_{i=1}^n \bbE\left[\left\langle \widehat\vx^t_i-\vx_i^t, \vy_i^t -\bar\vy^t+\overline{\nabla} \bF^t -\nabla f(\vx^t_i) \right\rangle\right] \nonumber \\
= & ~  \textstyle \sum_{i=1}^n  \bbE\big[ \langle\widehat\vx^t_i -\bar{\widehat\vx}^t, \vy_i^t -\bar\vy^t \rangle\big] +  \sum_{i=1}^n    \bbE\big[\langle \bar{\vx}^t - \vx_i^t,\vy_i^t -\bar\vy^t \rangle\big]   +   \sum_{i=1}^n \bbE\left[ \left\langle \widehat\vx^t_i-\vx_i^t, \bbE_{t} \left[\overline{\nabla} \bF^t\right] -\nabla f(\vx^t_i)\right\rangle\right] \nonumber \\
 \leq &~ \frac{1}{2\eta} \left( \textstyle  \bbE\big[\|\widehat\X^t_\perp\|^2\big]+ \bbE\big[\|\X^t_\perp\|^2\big]\right) + \eta \bbE\big[\|\Y^t_\perp\|^2\big]   +  \textstyle  L\bbE\big[\| \widehat\X^t-\X^t\|^2\big] +  \frac{1}{4 L} \sum_{i=1}^n \bbE\big[\|\overline{\nabla} \vf^t -\nabla f(\vx^t_i)\|^2\big] 
 \nonumber \\
\leq &~ \left(\textstyle\frac{1}{2\eta(1-\lambda L)^2} + \frac{1}{2\eta}  + \frac{L}{2}\right)\bbE\big[\|\X^t_\perp\|^2\big] + \eta\bbE\big[\|\Y^t_\perp\|^2\big] +  L\bbE\big[\|\widehat\X^t-\X^t\|^2\big],\label{eq:lem161_2}
\end{align}
% \YX{Mention $\bbE_{t} \left[\overline{\nabla} \bF^t\right] = \overline{\nabla} \vf^t$ to get the first inequality.} 
where $\textstyle \sum_{i=1}^n  \big\langle \bar{\widehat\vx}^t,\vy_i^t -\bar\vy^t \big\rangle = 0$ and  $\sum_{i=1}^n  \left\langle \bar{\vx}^t,\vy_i^t -\bar\vy^t \right\rangle = 0$ is used in the second equality, $\bbE_{t} \left[\overline{\nabla} \bF^t\right] = \overline{\nabla} \vf^t$ is used in the first inequality, and  $\|\widehat\X^t_\perp\|^2 =\left\|\left(\prox_{\lambda \phi}(\X^t)- \prox_{\lambda \phi}(\bar\vx^t)\1^\top\right) (\I-\J)\right\|^2\leq \frac{1}{(1-\lambda  L)^2}\|\X^t-\bar\X^t\|^2$ and \eqref{eq:sumsum} are used in the last inequality.

Now we can bound the summation of \eqref{eq:lem1.6.1} by using  \eqref{eq:lem161_1} and \eqref{eq:lem161_2}:
\begin{align*}
& ~ \bbE\big[\|\widehat\X^{t}-\X^{t+\frac{1}{2}}\|^2\big]\\
  \leq & ~ \left(\textstyle \left(1- \frac{\eta}{\lambda}\right)^2 + 2\left(1- \frac{\eta}{\lambda}\right)\eta L
\right) \bbE\big[\| \widehat\X^t - \X^t \|^2\big] \\
& ~ + \eta^2 \left(2\bbE[ \|\Y^t_\perp\|^2\big] +2\sigma^2 +  8 L^2  \bbE\big[\|\X^t_\perp\|^2\big] +4 L^2  \bbE\big[\| \X^t -  \widehat\X^t\|^2\big]\right) \\
& ~ + \textstyle 2  \left(1- \frac{\eta}{\lambda}\right)\eta  \left(\textstyle\left(\frac{1}{2\eta(1-\lambda L)^2} + \frac{1}{2\eta}  + \frac{ L}{2} \right)\bbE\big[\|\X^t_\perp\|^2\big] + \eta\bbE\big[\|\Y^t_\perp\|^2\big] +  L\bbE\big[\|\widehat\X^t-\X^t\|^2\big]\right) \\ 
= & ~ \textstyle \left(1 - 2\eta (\frac{1}{\lambda} - 2 L) + \frac{\eta^2}{\lambda} (\frac{1}{\lambda} - 2 L) + 2 L \eta^2(-\frac{1}{\lambda}+2 L)\right) \bbE\big[\| \widehat\X^t - \X^t\|^2\big] + 2\eta^2\sigma^2   \nonumber\\
 & ~ +  \textstyle \left( \left(1- \frac{\eta}{\lambda}\right) (1+\frac{1}{(1-\lambda L)^2}+ \eta  L) + 8\eta^2 L^2 \right) \bbE\big[\|\X^t_\perp\|^2\big]
  + 2 (2- \frac{\eta}{\lambda}) \eta^2 \bbE\big[\|\Y^t_\perp\|^2\big]. %\label{eq:hatx_xprox0}
\end{align*} 
With  $\eta \leq   \lambda \leq \frac{1}{4 L}$, we have $\frac{1}{(1-\lambda L)^2}\leq 2$ and 
% $2 (2- \frac{\eta}{\lambda})\leq 4$, and
% \begin{align*} 
%   1 - 2\eta (\frac{1}{\lambda} - 2 L) + \frac{\eta^2}{\lambda} (\frac{1}{\lambda} - 2 L) + 2 L \eta^2(-\frac{1}{\lambda}+2 L)\leq   1 -  \frac{1}{2\lambda},\\
%  \left(1- \frac{\eta}{\lambda}\right) (1+\frac{1}{(1-\lambda L)^2}+ \eta  L) + 8\eta^2 L^2 \leq 4.
% \end{align*}
% Thus 
\eqref{eq:hatx_xprox} follows from the inequality above.
 
\end{proof}
 
\begin{lemma}\label{lem:XI_J} 
The consensus error of $\X$ satisfies the following inequality
\begin{align} 
 \bbE\big[\|\X^t_\perp\|^2\big] \leq  \frac{1+\rho^2}{2} \bbE\big[\|   \X^{t-1}_\perp \|^2\big]+ \frac{2\rho^2  \eta^2 }{1-\rho^2}  \bbE\big[\| \Y^{t-1}_\perp \|^2\big]. \label{eq:X_consensus}
\end{align}
\end{lemma}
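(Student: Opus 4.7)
The plan is to chain three standard moves: (i) the spectral contraction of $\W$ on the consensus-orthogonal subspace, (ii) nonexpansiveness of $\prox_{\eta r}$, and (iii) Young's inequality with a weight tuned to $\rho$.

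First, I would post-multiply the matrix-form update $\X^t = \X^{t-\frac{1}{2}}\W$ by $\I - \J$. Using $\W\1 = \1$ (hence $\W\J = \J$), this gives $\X^t_\perp = \X^{t-\frac{1}{2}}(\W - \J)$. The key algebraic observation is that $\1^\top(\W - \J) = \0$, so an arbitrary rank-one shift can be inserted at no cost: for any $\vb \in \bbR^d$,
\[
\X^t_\perp = \bigl(\X^{t-\frac{1}{2}} - \vb\1^\top\bigr)(\W - \J),
\]
and submultiplicativity of the Frobenius norm together with $\|\W - \J\|_2 = \rho$ yields $\|\X^t_\perp\|^2 \leq \rho^2\,\|\X^{t-\frac{1}{2}} - \vb\1^\top\|^2$.

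Next, I would pick the shift $\vb = \prox_{\eta r}(\bar\vx^{t-1} - \eta\bar\vy^{t-1})$, so that the $i$-th column of $\X^{t-\frac{1}{2}} - \vb\1^\top$ becomes a difference of two proximal evaluations with the same prox parameter. Applying the nonexpansiveness part of Lemma~\ref{lem:prox_diff} column-by-column and summing collapses this difference to
\[
\|\X^{t-\frac{1}{2}} - \vb\1^\top\|^2 \leq \bigl\|(\X^{t-1} - \eta\Y^{t-1}) - (\bar\vx^{t-1} - \eta\bar\vy^{t-1})\1^\top\bigr\|^2 = \|\X^{t-1}_\perp - \eta\,\Y^{t-1}_\perp\|^2.
\]
This choice of $\vb$ is the crux of the argument: a naive choice such as $\vb = \0$ or $\vb = \bar\vx^{t-1}$ would leave an unwanted $\Y^{t-1}$ on the right-hand side rather than the desired $\Y^{t-1}_\perp$, preventing the consensus recursion from closing.

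Finally, I would expand the square and invoke Young's inequality with $c = (1-\rho^2)/(2\rho^2)$, the standard weight that gives $\rho^2(1+c) = (1+\rho^2)/2$ after multiplying by $\rho^2$. The cross term then contributes $\rho^2(1+\rho^2)\eta^2/(1-\rho^2)$, which I loosen to $2\rho^2\eta^2/(1-\rho^2)$ using $\rho^2 \leq 1$. Taking full expectation yields \eqref{eq:X_consensus}; the degenerate case $\rho = 0$ (e.g., a fully connected graph with uniform weights) is immediate since then $\X^t_\perp = \0$.
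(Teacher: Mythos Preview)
Your proposal is correct and follows essentially the same argument as the paper's proof: the spectral contraction $\|\W-\J\|_2=\rho$ together with the rank-one shift $\vb=\prox_{\eta r}(\bar\vx^{t-1}-\eta\bar\vy^{t-1})$, nonexpansiveness of $\prox_{\eta r}$, and Young's inequality with weight $c=(1-\rho^2)/(2\rho^2)$, followed by the relaxation $1+\rho^2\le 2$. The only cosmetic difference is that the paper writes out the intermediate coefficients $\rho^2+\frac{1-\rho^2}{2}$ and $\rho^2+\frac{2\rho^4}{1-\rho^2}$ explicitly before simplifying, whereas you go straight to the final form.
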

\begin{proof}
 With the updates \eqref{eq:x_half_update} and \eqref{eq:x_1_update}, we have
\begin{align*}
     &~ 
      \bbE\big[\|\X^t_\perp\|^2\big] = \bbE\big[\|\X^{t-\frac{1}{2}}\W(\I- \J)\|^2\big] = \bbE\big[\|\X^{t-\frac{1}{2}} (\W-\J)\|^2\big] \nonumber\\
    =&~ \bbE\big[\| \prox_{\eta r} \left(\X^{t-1} - \eta \Y^{t-1}\right) (\W-\J)\|^2\big] \nonumber\\
    =&~ \bbE\big[\| \left(\prox_{\eta r} \left(\X^{t-1} - \eta \Y^{t-1}\right)-\prox_{\eta r} \left(\bar\vx^{t-1} - \eta \bar\vy^{t-1}\right)\1^\top\right) (\W-\J)\|^2\big] \nonumber\\
    \leq &~ \bbE\big[\|\prox_{\eta r} \left(\X^{t-1} - \eta \Y^{t-1}\right)-\prox_{\eta r} \left(\bar\vx^{t-1} - \eta \bar\vy^{t-1}\right)\1^\top\|^2 \|(\W-\J)\|^2_2] \nonumber\\
    \leq &~  \rho^2 \bbE\left[ \textstyle \sum_{i=1}^n\| \prox_{\eta r} \left(\vx_i^{t-1} - \eta \vy_i^{t-1}\right)-\prox_{\eta r} \left(\bar\vx^{t-1} - \eta \bar\vy^{t-1}\right)  \|^2\right] \nonumber\\
    \leq &~ \rho^2  \bbE\left[ \textstyle \sum_{i=1}^n\|\left(\vx_i^{t-1} - \eta \vy_i^{t-1}\right)-\left(\bar\vx^{t-1} - \eta \bar\vy^{t-1}\right)  \|^2\right]  = \rho^2  \bbE\big[\|   \X^{t-1}_\perp - \eta    \Y^{t-1}_\perp \|^2\big] \nonumber\\
  \leq &~ \textstyle \big(\textstyle \rho^2 + \frac{1-\rho^2}{2}\big) \bbE\big[\|   \X^{t-1}_\perp \|^2\big]+ \big( \textstyle\rho^2 + \frac{2\rho^4}{1-\rho^2}\big) \eta^2\bbE\big[\| \Y^{t-1}_\perp \|^2\big] \nonumber\\
  = &~\textstyle \frac{1+\rho^2}{2} \bbE\big[\|   \X^{t-1}_\perp \|^2\big]+ \frac{1+\rho^2}{1-\rho^2}  \rho^2 \eta^2\bbE\big[\| \Y^{t-1}_\perp \|^2\big] \nonumber \\
  \leq &~\textstyle \frac{1+\rho^2}{2} \bbE\big[\|   \X^{t-1}_\perp \|^2\big]+ \frac{2\rho^2  \eta^2 }{1-\rho^2}  \bbE\big[\| \Y^{t-1}_\perp \|^2\big],
\end{align*}  
 % {\color{blue}please explain why the first inequality is true, and the second inequality. One more step is added.}
where we have used $\1^\top (\W-\J)=\0$ in the third equality, $\|\W-\J\|_2\leq \rho$ in the second inequality, and Lemma \ref{lem:prox_diff} in the third inequality, and $\rho\leq 1$ is used in the last inequality.
\end{proof}
 
\begin{lemma}\label{lem:YI_J}
Let $\eta\leq \min\{\lambda, \frac{1-\rho^2}{4\sqrt{6} \rho  L} \} $ and $\lambda \leq\frac{1}{4 L}$.  The consensus error of $\Y$ satisfies
\begin{align} 
   \bbE\big[\|\Y^t_\perp\|^2\big]  
   \leq &~ \frac{48\rho^2  L^2 }{1-\rho^2 }   \bbE\big[\|\X^{t-1}_\perp\|^2\big] \!+\! \frac{3\!+\!\rho^2}{4}  \bbE\big[\|\Y^{t-1}_\perp \|^2\big] \!+\! \frac{12\rho^2  L^2 }{1-\rho^2 } \bbE\big[\|\widehat\X^{t-1}-\X^{t-1} \|^2\big] \!+\! 6 n\sigma^2. \label{eq:Y_consensus}
\end{align}
\end{lemma}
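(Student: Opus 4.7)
The plan is to establish the one-step recursion by combining the mixing contraction on $\Y^{t-\frac{1}{2}}$ with a careful variance-bias split of the stochastic gradient difference. First I would apply the update \eqref{eq:y_update_matrix} and note that $\Y^t(\I-\J) = \Y^{t-\frac{1}{2}}(\W-\J) = \Y^{t-\frac{1}{2}}_\perp(\W-\J)$, where the last equality uses $\1^\top(\W-\J)=0$. By Assumption~\ref{assu:mix_matrix}(iii) this yields the contraction $\|\Y^t_\perp\|^2 \le \rho^2\|\Y^{t-\frac{1}{2}}_\perp\|^2$, reducing the task to bounding $\bbE[\|\Y^{t-\frac{1}{2}}_\perp\|^2]$.

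Second, from \eqref{eq:y_half_update_matrix}, $\Y^{t-\frac{1}{2}}_\perp = \Y^{t-1}_\perp + (\nabla\bF^t - \nabla\bF^{t-1})(\I-\J)$. I would decompose $\nabla\bF^t - \nabla\bF^{t-1} = (\nabla\bF^t - \nabla\f^t) + (\nabla\f^t - \nabla\bF^{t-1})$, where the first summand has zero conditional mean under $\bbE_t$ by Assumption~\ref{assu:stoc_grad}(ii) while the second is $\bbE_t$-measurable. The cross-term then vanishes and the pythagorean identity gives
\begin{align*}
\bbE_t\!\left[\|\Y^{t-\frac{1}{2}}_\perp\|^2\right] = \left\|\Y^{t-1}_\perp + (\nabla\f^t - \nabla\bF^{t-1})(\I-\J)\right\|^2 + \bbE_t\!\left[\|(\nabla\bF^t - \nabla\f^t)(\I-\J)\|^2\right],
\end{align*}
and the last expectation is at most $n\sigma^2$ by Assumption~\ref{assu:stoc_grad}(ii) together with $\|\I-\J\|_2 \le 1$.

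Third, I would apply Young's inequality with parameter $a = \frac{1-\rho^2}{2\rho^2}$ to the remaining squared-norm term, producing a clean coefficient $\rho^2(1+a)=\frac{1+\rho^2}{2}$ on $\|\Y^{t-1}_\perp\|^2$ and a coefficient of order $\frac{\rho^2}{1-\rho^2}$ on $\|\nabla\f^t - \nabla\bF^{t-1}\|^2$. Splitting $\nabla\f^t - \nabla\bF^{t-1} = (\nabla\f^t - \nabla\f^{t-1}) + (\nabla\f^{t-1} - \nabla\bF^{t-1})$ and invoking Assumption~\ref{assu:prob}(ii) for the former and Assumption~\ref{assu:stoc_grad}(ii) for the latter, I obtain an $L^2\,\bbE[\|\X^t-\X^{t-1}\|^2]$ piece and an additional $n\sigma^2$ piece.

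The main obstacle is bounding $\bbE[\|\X^t-\X^{t-1}\|^2]$ cleanly in terms of $\|\X^{t-1}_\perp\|^2$, $\|\widehat\X^{t-1}-\X^{t-1}\|^2$, and $\eta^2\|\Y^{t-1}_\perp\|^2$, because the proximal step \eqref{eq:x_half_update_matrix} couples $\X^{t-\frac{1}{2}}$ to $\Y^{t-1}$. I would write $\vx_i^t - \vx_i^{t-1} = \sum_j \W_{ji}\left(\vx_j^{t-\frac{1}{2}} - \vx_i^{t-1}\right)$, apply Jensen's inequality, insert $\widehat\vx_j^{t-1}$ via a three-term triangle inequality, and sum over $i$ using the column stochasticity $\sum_i \W_{ji}=1$. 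This produces three pieces: (i) $\|\X^{t-\frac{1}{2}}-\widehat\X^{t-1}\|^2$, controlled by Lemma~\ref{lem:Xhat_Xhalf}; (ii) $\|\widehat\X^{t-1}-\X^{t-1}\|^2$; and (iii) a weighted double sum of $\|\vx_j^{t-1}-\vx_i^{t-1}\|^2$ that collapses to a constant multiple of $\|\X^{t-1}_\perp\|^2$ after inserting $\bar\vx^{t-1}$. The spurious $\eta^2\|\Y^{t-1}_\perp\|^2$ term produced by Lemma~\ref{lem:Xhat_Xhalf} is multiplied through by $\frac{\rho^2 L^2}{1-\rho^2}$, and its size is controlled by the step-size assumption $\eta \le \frac{1-\rho^2}{4\sqrt{6}\,\rho L}$ so that it is absorbed into the leading $\frac{1+\rho^2}{2}\|\Y^{t-1}_\perp\|^2$ coefficient, upgrading it to the claimed $\frac{3+\rho^2}{4}$. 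Collecting the remaining contributions yields the stated constants $\frac{48\rho^2 L^2}{1-\rho^2}$ on $\|\X^{t-1}_\perp\|^2$, $\frac{12\rho^2 L^2}{1-\rho^2}$ on $\|\widehat\X^{t-1}-\X^{t-1}\|^2$, and $6n\sigma^2$ on the noise.
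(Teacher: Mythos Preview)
Your overall architecture is right---contraction from mixing, variance--bias split at the $\bbE_t$ level, then control of $\bbE[\|\X^t-\X^{t-1}\|^2]$ via Lemma~\ref{lem:Xhat_Xhalf}. But there is a genuine gap in the third step that prevents you from reaching the stated constant $6n\sigma^2$.

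After the Pythagorean split you are left with $\|\Y^{t-1}_\perp + (\nabla\f^t - \nabla\bF^{t-1})(\I-\J)\|^2$. Applying Young with $a=\frac{1-\rho^2}{2\rho^2}$ gives the correct $\frac{1+\rho^2}{2}$ on $\|\Y^{t-1}_\perp\|^2$, but the companion coefficient on the other square is $\rho^2(1+a^{-1})=\frac{\rho^2(1+\rho^2)}{1-\rho^2}$. When you then split off $\nabla\f^{t-1}-\nabla\bF^{t-1}$ and bound it by $n\sigma^2$, the resulting noise contribution is of order $\frac{\rho^2}{1-\rho^2}\,n\sigma^2$, which diverges as $\rho\to 1$ and already exceeds $6n\sigma^2$ once $\rho\gtrsim 0.8$. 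The issue is structural: $\Y^{t-1}_\perp$ and $\nabla\bF^{t-1}$ both depend on $\xi^{t-1}$, so the cross term $\langle \Y^{t-1}_\perp,(\nabla\f^{t-1}-\nabla\bF^{t-1})(\I-\J)\rangle$ has no reason to be small and Young's inequality is forced to pay the $\frac{1}{1-\rho^2}$ price.

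The paper avoids this by \emph{not} applying the contraction $\|\Y^t_\perp\|\le\rho\|\Y^{t-\frac12}_\perp\|$ first. Instead it expands $\|\Y^{t-\frac12}(\W-\J)\|^2$ directly and keeps the cross term $2\bbE\big[\langle \Y^{t-1}(\W-\J),(\nabla\f^{t-1}-\nabla\bF^{t-1})(\W-\J)\rangle\big]$ explicit. The key move is to unroll one more step, writing $\Y^{t-1}=(\Y^{t-2}+\nabla\bF^{t-1}-\nabla\bF^{t-2})\W$; then $(\Y^{t-2}-\nabla\bF^{t-2}-\nabla\f^{t-1})$ is $\sigma(\xi^0,\dots,\xi^{t-2})$-measurable while $\nabla\f^{t-1}-\nabla\bF^{t-1}$ has zero conditional mean, so that part of the cross term vanishes in expectation. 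What remains is $2\bbE\big[\langle(\nabla\bF^{t-1}-\nabla\f^{t-1})\W(\W-\J),(\nabla\f^{t-1}-\nabla\bF^{t-1})(\W-\J)\rangle\big]\le 2\rho^2 n\sigma^2$, a \emph{bounded} coefficient. This is precisely the device you are missing; without it the lemma as stated cannot be obtained (though a weaker version with $\frac{C}{1-\rho^2}n\sigma^2$ would follow from your outline). Your handling of $\bbE[\|\X^t-\X^{t-1}\|^2]$ is otherwise equivalent to the paper's matrix decomposition and is fine.
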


\begin{proof}
By the updates \eqref{eq:y_half_update} and \eqref{eq:y_update}, we have  
\begin{align}	 
     &~ 
      \bbE\big[\|\Y^t_\perp\|^2\big] = \bbE\big[\|\Y^{t-\frac{1}{2}}(\W- \J)\|^2\big] 
  =  \bbE\big[\| \Y^{t-1}(\W -\J)  +  (\nabla \bF^t - \nabla \bF^{t-1}) (\W -\J)\|^2\big] \nonumber\\
  = &~ \bbE\big[\|\Y^{t-1}(\I-\J)(\W -\J)\|^2\big] +  \bbE\big[\|(\nabla \bF^t - \nabla \bF^{t-1})  (\W-\J) \|^2\big] + 2\bbE\big[\langle \Y^{t-1} (\W -\J), (\nabla \bF^t - \nabla \bF^{t-1})  (\W-\J) \rangle\big] \nonumber\\
   \leq &~   \rho^2 \bbE\big[\|\Y^{t-1}_\perp \|^2\big] +  \rho^2 \bbE\big[\|\nabla \bF^t - \nabla \bF^{t-1}\|^2\big] +  2\bbE\big[\langle \Y^{t-1} (\W -\J),(\nabla \f^t - \nabla \bF^{t-1})(\W-\J) \rangle\big], \label{eq:y_cons1}
\end{align} 
where we have used $\J\W=\J\J=\J$, $\|\W-\J\|_2\leq \rho$ %{\color{blue}is there a $n$ missed?}
and $\bbE_t[\nabla \bF^t] = \nabla \vf^t$. 
% {\color{blue}the expectation is taken over the data samples rather than $t$. Actually, we use $\bbE_t$ to denote taking expectation with respect to $\xi_t$, i.e., the data samples in the $t$ iteration.}
For the second term on the right hand side of \eqref{eq:y_cons1}, we have
\begin{align}
   &~\bbE\big[\|\nabla \bF^t - \nabla \bF^{t-1}\|^2\big] = \bbE\big[\|\nabla \bF^t - \nabla \f^t+\nabla \f^t -\nabla \bF^{t-1}\|^2\big] \nonumber\\
 \overset{\bbE_t[\nabla \bF^t] = \nabla \f^t}{=}&~
  \bbE\big[\|\nabla \bF^t - \nabla \f^t\|^2\big]+\bbE\big[\|\nabla \f^t- \nabla \f^{t-1}+\nabla \f^{t-1}-\nabla \bF^{t-1}\|^2\big] \nonumber \\ 
\leq &~  \bbE\big[\|\nabla \bF^t - \nabla \f^t\|^2\big]+2\bbE\big[\|\nabla \f^t- \nabla \f^{t-1}\|^2\big]+2\bbE\big[\|\nabla \f^{t-1}-\nabla \bF^{t-1}\|^2\big]  \nonumber\\
 \leq &~  3 n \sigma^2 + 2 L^2 \bbE\big[\|\X^{t}-\X^{t-1}\|^2\big]. \label{eq:y_cons12}
\end{align}
% {\color{blue}please check should the noise term be $\frac{\sigma^2}{n}$ based the definition of $F$ i think F should be defined as $1/n f_i$. Actually, in our note, $f$ is defined as $1/n f_i$, $F$ is the stochastic version of $f$.}

For the third term on the right hand side of \eqref{eq:y_cons1}, we have
\begin{align}
&~2\bbE\big[\langle \Y^{t-1} (\W -\J), (\nabla \f^t - \nabla \bF^{t-1})(\W-\J) \rangle\big] \nonumber \\ 
 =&~2\bbE\big[\langle \Y^{t-1}(\W -\J), (\nabla \f^t - \nabla \f^{t-1})(\W-\J) \rangle\big] +2\bbE\big[\langle  \Y^{t-1}(\W -\J), (\nabla \f^{t-1} - \nabla \bF^{t-1})(\W-\J) \rangle\big] \nonumber \\
 =&~2\bbE\big[\langle \Y^{t-1}(\I-\J)(\W -\J), (\nabla \f^t - \nabla \f^{t-1})(\W-\J) \rangle\big] \nonumber \\
 &~ +2\bbE\big[\langle  (\Y^{t-2}  +  \nabla \bF^{t-1} - \nabla \bF^{t-2})\W(\W -\J), (\nabla \f^{t-1} - \nabla \bF^{t-1})(\W-\J) \rangle\big] \nonumber \\
 =&~2\bbE\big[\langle \Y^{t-1}(\I-\J)(\W -\J), (\nabla \f^t - \nabla \f^{t-1})(\W-\J) \rangle\big] \nonumber \\
 &~ +2\bbE\big[\langle  (\nabla \bF^{t-1} - \nabla \f^{t-1} )\W(\W -\J), (\nabla \f^{t-1} - \nabla \bF^{t-1})(\W-\J) \rangle\big] \nonumber \\
 \leq &~2\bbE\big[\|\Y^{t-1}(\I-\J)(\W -\J)\|\cdot\|(\nabla \f^t - \nabla \f^{t-1})(\W-\J) \|\big] \nonumber \\
 &~ +2\bbE\big[\|(\nabla \bF^{t-1} - \nabla \f^{t-1} )\W(\W -\J)\|\cdot\|(\nabla \f^{t-1} - \nabla \bF^{t-1})(\W-\J)\|\big] \nonumber \\
 \leq&~ 2\rho^2\bbE\big[\| \Y^{t-1}_\perp\|\cdot\|\nabla \f^t - \nabla \f^{t-1}\|\big]  +  2\rho^2\bbE\big[\|\nabla \bF^{t-1} - \nabla \f^{t-1}\|^2\big] \nonumber\\
 \leq &~ \textstyle\frac{1-\rho^2}{2} \bbE\big[\| \Y^{t-1}_\perp\|^2\big]+\frac{2\rho^4}{1-\rho^2}\bbE\big[\|\nabla \f^t - \nabla \f^{t-1}\|^2\big] + 2\rho^2 n \sigma^2 \nonumber \\
\leq &~  \textstyle\frac{1-\rho^2}{2} \bbE\big[\| \Y^{t-1}_\perp\|^2\big]+\frac{2\rho^4 L^2}{1-\rho^2} \bbE\big[\|  \X^t - \X^{t-1}\|^2\big]+ 2\rho^2 n \sigma^2, \label{eq:y_cons13}
\end{align}
where the second equality  holds by $\W-\J=(\I-\J)(\W-\J)$, \eqref{eq:y_half_update} and \eqref{eq:y_update}, the third equality holds because $\Y^{t-2} - \nabla \bF^{t-2} -\nabla \f^{t-1}$ does not depend on $\xi_i^{t-1}$'s, 
and the second inequality holds because $\|\W-\J\|_2\leq \rho$ and $\|\W\|_2\leq 1$.  
Plugging  \eqref{eq:y_cons12} and \eqref{eq:y_cons13} into \eqref{eq:y_cons1}, we have %the consensus error of $\Y$ satisfies
\begin{align} 
    \bbE\big[\|\Y^t_\perp\|^2\big] 
   \leq &~ \textstyle\frac{1+\rho^2}{2} \bbE\big[\|\Y^{t-1}_\perp \|^2\big]  + \frac{2 \rho^2 L^2 }{1-\rho^2 } \bbE\big[\| \X^t - \X^{t-1}\|^2\big]  +  5 \rho^2 n \sigma^2 , \label{eq:y_cons2} 
   \end{align}
where we have used $1+\frac{\rho^2}{1-\rho^2} = \frac{1}{1-\rho^2 }$.
% \YX{Change ``the right hand side of an equation'' to ``the right hand side of an equation'' throughout the paper.}
For the second term in the right hand side of \eqref{eq:y_cons2}, we have
\begin{align}
&~ \| \X^{t+1} - \X^{t}\|^2 = \|\X^{t+\frac{1}{2}}\W-\X^t\|^2 =
\|(\X^{t+\frac{1}{2}}-\widehat\X^t)\W +(\widehat\X^t-\X^t)\W + \X^t (\W-\I)\|^2 \nonumber \\
\leq &~   3\|(\X^{t+\frac{1}{2}}-\widehat\X^t)\W\|^2 +3\|(\widehat\X^t-\X^t)\W\|^2 + 3\|\X^t(\I-\J)(\W-\I)\|^2 \nonumber \\
\leq &~  3\|\X^{t+\frac{1}{2}}-\widehat\X^t \|^2 +3\|\widehat\X^t-\X^t \|^2 + 12\|\X^t_\perp\|^2,\label{eq:Xplus1-X}
\end{align} 
% {\color{blue}please explain the second equality. One more step is added.}
where in the first inequality we have used $\X^t (\W-\I)=\X^t(\I-\J)(\W-\I)$ from $\J(\W-\I) = \J-\J$, and in the second inequality we have used $\|\W\|_2\leq 1$ and $\|\W-\I\|_2\leq 2$.

Taking expectation over both sides of \eqref{eq:Xplus1-X} and using \eqref{eq:hatx_xprox}, we have
\begin{align*}
&~ \bbE\big[\| \X^{t+1} - \X^{t}\|^2\big] \\
\le &~3 \left( \textstyle 4 \bbE\big[\|\X^t_\perp\|^2\big]   +  \left( 1-\frac{\eta}{2\lambda} \right) \bbE\big[\| \widehat\X^t - \X^t\|^2\big] +4\eta^2 \bbE\big[\|\Y^t_\perp\|^2\big] + 2\eta^2\sigma^2\right)  +3 \bbE\big[\|\widehat\X^t-\X^t \|^2\big] + 12 \bbE\big[\|\X^t_\perp\|^2\big]\\
= &~ 3 \textstyle  \left(2 -\frac{\eta}{2\lambda} \right) \bbE\big[\| \widehat\X^t - \X^t\|^2\big] +12\eta^2 \bbE\big[\|\Y^t_\perp\|^2\big] + 6\eta^2\sigma^2 + 24\bbE\big[\|\X^t_\perp\|^2\big].
\end{align*}
Plugging the inequality above into   \eqref{eq:y_cons2} gives  
\begin{align*} 
  \bbE\big[\|\Y^t_\perp\|^2\big] 
%  \leq  &~  \frac{1+\rho^2}{2}\bbE\big[\|\Y^{t-1}_\perp \|^2\big]  +  3 \rho^2 n \sigma^2 \\
%     &~+ \frac{6 \rho^2 L^2 }{1-\rho^2 }  \left(  \bbE\big[\|\X^{t-\frac{1}{2}}-\widehat\X^{t-1} \|^2\big] + \bbE\big[\|\widehat\X^{t-1}-\X^{t-1} \|^2\big] + 4 \bbE\big[\|\X^{t-1}_\perp\|^2\big] \right)\\
   \leq &~ \left(\textstyle\frac{1+\rho^2}{2} + \frac{24 \rho^2  L^2\eta^2 }{1-\rho^2 } \right) \bbE\big[\|\Y^{t-1}_\perp \|^2\big]  +  \textstyle 5 \rho^2  n\sigma^2 +\frac{12 \rho^2 L^2  \eta^2 \sigma^2 }{1-\rho^2 } \nonumber \\
   &~\textstyle + \frac{6\rho^2  L^2 }{1-\rho^2 }\left( \textstyle 2- \frac{\eta}{2\lambda}  \right) \bbE\big[\|\widehat\X^{t-1}-\X^{t-1} \|^2\big] + \frac{48 \rho^2 L^2 }{1-\rho^2 }   \bbE\big[\|\X^{t-1}_\perp\|^2\big].
\end{align*}
By $\rho<1$ and  $ \eta \leq \frac{1-\rho^2}{4\sqrt{6} \rho  L}$,  we have 
$\frac{24 \rho^2  L^2 \eta^2}{1-\rho^2 } \leq \frac{1-\rho^2}{4}$ and  $\frac{12 \rho^2  L^2  \eta^2}{1-\rho^2 } \leq \frac{1-\rho^2}{8}\leq n$, and further \eqref{eq:Y_consensus}.
\end{proof}

\begin{lemma}\label{lem:weak_convex} 
Let  $\eta\leq \lambda \leq\frac{1}{4 L}$. It holds
%For the updates of $\X^t$, i.e., \eqref{eq:x_half_update} and \eqref{eq:x_1_update}, we have 
\begin{align}
 \sum_{i=1}^n  \bbE[\phi_\lambda(\vx_i^{t+1})]
\leq  &~   \sum_{i=1}^n \bbE[ \phi_\lambda( \vx_i^{t})] + \frac{4}{\lambda}
  \bbE\big[\|\X^t_\perp\|^2\big] +   \frac{4 \eta^2}{\lambda} \bbE[ \|\Y^t_\perp\|^2\big]   - \frac{\eta}{4\lambda^2} \bbE\big[\| \widehat\X^t - \X^t \|^2\big]  + \frac{\eta^2\sigma^2}{\lambda}. \label{eq:phi_update}
\end{align}
\end{lemma}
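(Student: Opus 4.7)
The plan is to decompose the one-step change $\sum_i \bbE[\phi_\lambda(\vx_i^{t+1})] - \sum_i \bbE[\phi_\lambda(\vx_i^t)]$ at the intermediate iterate $\X^{t+1/2}$, so that the proximal step $\X^t \to \X^{t+1/2}$ carries the negative descent term and the mixing step $\X^{t+1/2}\to\X^{t+1}$ only pays a small consensus overhead.

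For the proximal step, I would combine the envelope inequality $\phi_\lambda(\vx_i^{t+1/2})\le \phi(\widehat\vx_i^t)+\tfrac{1}{2\lambda}\|\widehat\vx_i^t-\vx_i^{t+1/2}\|^2$ with the identity $\phi_\lambda(\vx_i^t)=\phi(\widehat\vx_i^t)+\tfrac{1}{2\lambda}\|\widehat\vx_i^t-\vx_i^t\|^2$ to get $\phi_\lambda(\vx_i^{t+1/2})-\phi_\lambda(\vx_i^t)\le \tfrac{1}{2\lambda}(\|\widehat\vx_i^t-\vx_i^{t+1/2}\|^2-\|\widehat\vx_i^t-\vx_i^t\|^2)$. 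Summing over $i$, taking expectation, and plugging in Lemma~\ref{lem:Xhat_Xhalf} gives $\sum_i \bbE[\phi_\lambda(\vx_i^{t+1/2})]\le \sum_i\bbE[\phi_\lambda(\vx_i^t)] + \tfrac{2}{\lambda}\bbE[\|\X^t_\perp\|^2] + \tfrac{2\eta^2}{\lambda}\bbE[\|\Y^t_\perp\|^2] + \tfrac{\eta^2\sigma^2}{\lambda} - \tfrac{\eta}{4\lambda^2}\bbE[\|\widehat\X^t-\X^t\|^2]$, since after multiplication by $\tfrac{1}{2\lambda}$ the coefficient $(1-\tfrac{\eta}{2\lambda})$ from Lemma~\ref{lem:Xhat_Xhalf} cancels against the $-1$ from the identity term and leaves exactly $-\tfrac{\eta}{4\lambda^2}$.

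For the mixing step, I will use that $\phi_\lambda$ is itself $L_\lambda$-weakly convex with $L_\lambda\le \tfrac{L}{1-\lambda L}\le \tfrac{1}{3\lambda}$ when $\lambda\le\tfrac{1}{4L}$; this follows from the explicit rewriting $\phi_\lambda(x)=-\tfrac{L}{2(1-\lambda L)}\|x\|^2 + \psi_{\lambda/(1-\lambda L)}(\tfrac{x}{1-\lambda L})$, where $\psi=\phi+\tfrac{L}{2}\|\cdot\|^2$ is convex and the second term is therefore a convex Moreau envelope. Applying Lemma~\ref{lem:weak_convx} to the convex combination $\vx_i^{t+1}=\sum_j \W_{ji}\vx_j^{t+1/2}$, summing over $i$ and using $\sum_i \W_{ji}=1$, and then directly expanding the quadratic cross term yields $\sum_i\sum_{j<k}\W_{ji}\W_{ki}\|\vx_j^{t+1/2}-\vx_k^{t+1/2}\|^2 = \|\X^{t+1/2}\|^2-\|\X^{t+1}\|^2$, which is bounded by $\|\X^{t+1/2}_\perp\|^2$ after decomposing $\X$ along $\J$ and using $\bar\vx^{t+1}=\bar\vx^{t+1/2}$ from \eqref{eq:x_y_mean}. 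Nonexpansiveness of $\prox_{\eta r}$ combined with the variational fact that $\bar\vx^{t+1/2}$ minimizes $\vy\mapsto \sum_i\|\vy-\vx_i^{t+1/2}\|^2$ (so any reference point, e.g., $\prox_{\eta r}(\bar\vx^t-\eta\bar\vy^t)$, yields an upper bound) gives $\|\X^{t+1/2}_\perp\|^2\le 2\|\X^t_\perp\|^2+2\eta^2\|\Y^t_\perp\|^2$, so $\sum_i \phi_\lambda(\vx_i^{t+1})\le \sum_j \phi_\lambda(\vx_j^{t+1/2}) + \tfrac{1}{3\lambda}\|\X^t_\perp\|^2 + \tfrac{\eta^2}{3\lambda}\|\Y^t_\perp\|^2$. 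Adding the two stage estimates produces total coefficients $\tfrac{2}{\lambda}+\tfrac{1}{3\lambda}=\tfrac{7}{3\lambda}\le \tfrac{4}{\lambda}$ and $\tfrac{2\eta^2}{\lambda}+\tfrac{\eta^2}{3\lambda}\le \tfrac{4\eta^2}{\lambda}$, while the descent term $-\tfrac{\eta}{4\lambda^2}\bbE[\|\widehat\X^t-\X^t\|^2]$ and the $\sigma^2$-term are preserved, producing \eqref{eq:phi_update}.

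The main obstacle will be the sharp weak-convexity constant of $\phi_\lambda$: the naive estimate $L_\lambda\le \tfrac{1}{\lambda}(1+\tfrac{1}{1-\lambda L})$ coming from the Lipschitz constant of $\nabla\phi_\lambda$ inflates the $\X^t_\perp$ coefficient past $\tfrac{4}{\lambda}$, so the finer bound $L_\lambda\le \tfrac{L}{1-\lambda L}$ obtained from the quadratic-plus-convex-envelope decomposition is needed. A secondary subtlety is recognizing the Lemma~\ref{lem:weak_convx} cross term as the telescoping quantity $\|\X^{t+1/2}\|^2-\|\X^{t+1}\|^2$, which affords a tight bound by $\|\X^{t+1/2}_\perp\|^2$ rather than a lossy Young's-inequality estimate that would destroy the $-\tfrac{\eta}{4\lambda^2}$ descent.
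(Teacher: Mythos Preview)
Your proposal is correct, and it follows a genuinely different route from the paper's own proof. The paper does not use weak convexity of $\phi_\lambda$; instead it works at the level of $\phi$ by inserting the point $\sum_j \W_{ji}\widehat\vx_j^{t+\frac{1}{2}}$ into the definition of $\phi_\lambda(\vx_i^{t+1})$, applying Lemma~\ref{lem:weak_convx} to $\phi$ (weakly convex with constant $L$), and then relating $\|\widehat\vx_j^{t+\frac{1}{2}}-\widehat\vx_l^{t+\frac{1}{2}}\|$ back to $\|\vx_j^{t+\frac{1}{2}}-\vx_l^{t+\frac{1}{2}}\|$ via Lemma~\ref{lem:prox_diff}. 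The resulting cross term is bounded crudely by $8\|\X^t_\perp\|^2+8\eta^2\|\Y^t_\perp\|^2$ through a triangle-inequality argument, which already suffices for the stated constants.

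Your route buys two things. First, by establishing the sharper weak-convexity constant $L_\lambda\le \tfrac{L}{1-\lambda L}\le \tfrac{1}{3\lambda}$ for $\phi_\lambda$ via the quadratic-plus-convex-envelope decomposition, you apply Lemma~\ref{lem:weak_convx} directly to $\phi_\lambda$ and avoid passing through the prox operator $\widehat\vx_j^{t+\frac{1}{2}}$ and the associated $\tfrac{1}{(1-\lambda L)^2}$ loss. Second, your observation that the summed cross term telescopes to $\|\X^{t+\frac{1}{2}}\|^2-\|\X^{t+1}\|^2\le \|\X^{t+\frac{1}{2}}_\perp\|^2$ is tighter than the paper's bound; together these give overhead $\tfrac{1}{3\lambda}$ from the mixing step rather than the paper's $\tfrac{2}{\lambda}$, so your total coefficients $\tfrac{7}{3\lambda}$ actually improve on the stated $\tfrac{4}{\lambda}$. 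The paper's approach, on the other hand, needs no auxiliary fact about $\phi_\lambda$ beyond its definition, so it stays entirely within the lemmas already proved.
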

\begin{proof}
By the definition in \eqref{eq:x_t_hat}%\YX{refer to the equation for the definition}
, the update in \eqref{eq:x_1_update}, the $ L$-weakly convexity of $\phi$, and the convexity of $\|\cdot\|^2$, we have
\begin{align}
&~\phi_\lambda(\vx_i^{t+1}) \overset{\eqref{eq:x_t_hat}}{=}  \phi(\widehat\vx_i^{t+1})+{\textstyle \frac{1}{2\lambda} }\|\widehat\vx_i^{t+1}-\vx_i^{t+1}\|^2  \overset{\eqref{eq:x_1_update}}{\leq}  \phi\bigg(\sum_{j=1}^n\W_{ji}\widehat\vx_j^{t+\frac{1}{2}}\bigg)+{ \frac{1}{2\lambda}} \bigg\|\sum_{j=1}^n \W_{ji}\big(\widehat\vx_j^{t+\frac{1}{2}}-\vx_j^{t+\frac{1}{2}}\big)\bigg\|^2  \nonumber \\
&~\overset{\mbox{Lemma \ref{lem:weak_convx}} }{\leq} \sum_{j=1}^n \W_{ji} \phi(\widehat\vx_j^{t+\frac{1}{2}}) +{ \frac{L}{2} }\sum_{j=1}^{n-1}\sum_{l=j+1}^n \W_{ji}\W_{li}\|\widehat\vx_j^{t+\frac{1}{2}}-\widehat\vx_l^{t+\frac{1}{2}}\|^2+{  \frac{1}{2\lambda} }\sum_{j=1}^n \W_{ji} \|\widehat\vx_j^{t+\frac{1}{2}}-\vx_j^{t+\frac{1}{2}}\|^2 \nonumber \\
&~\leq \sum_{j=1}^n \W_{ji} \phi_\lambda(\vx_j^{t+\frac{1}{2}}) + \frac{1}{4\lambda} \sum_{j=1}^{n-1}\sum_{l=j+1}^n \W_{ji}\W_{li}\|\vx_j^{t+\frac{1}{2}}-\vx_l^{t+\frac{1}{2}}\|^2,  \label{eq:phi_update_0}
\end{align}
where %$\widehat{\vx}^{t+\frac{1}{2}}_i = \prox_{\lambda \phi}(\vx^{t+\frac{1}{2}}_i)$, and 
in the last inequality we use $  \phi(\widehat\vx_j^{t+\frac{1}{2}}) + \frac{1}{2\lambda} \|(\widehat\vx_j^{t+\frac{1}{2}}-\vx_j^{t+\frac{1}{2}})\|^2 =  \phi_\lambda(\vx_j^{t+\frac{1}{2}})$,  $\|\widehat\vx_j^{t+\frac{1}{2}}-\widehat\vx_l^{t+\frac{1}{2}}\|^2\leq \frac{1}{(1-\lambda L)^2}\|\vx_j^{t+\frac{1}{2}}-\vx_l^{t+\frac{1}{2}}\|^2$ from Lemma \ref{lem:prox_diff},  $\frac{1}{(1-\lambda L)^2}\leq 2$ and $ L \leq  \frac{1}{4\lambda}$.
For the first term on the right hand side of \eqref{eq:phi_update_0}, with $\sum_{i=1}^n \W_{ji}=1$, we have
\begin{align}
 \sum_{i=1}^n   \sum_{j=1}^n \W_{ji} \phi_\lambda(\vx_j^{t+\frac{1}{2}}) = &~ 
 \sum_{i=1}^n \phi_\lambda(\vx_i^{t+\frac{1}{2}}) 
\leq \sum_{i=1}^n \phi_\lambda( \vx_i^{t})  + { \frac{1}{2\lambda}} \|\widehat\X^{t}-\X^{t+\frac{1}{2}}\|^2  - { \frac{1}{2\lambda}} \|\widehat\X^t - \X^t\|^2, \label{eq:phi_lambda} 
\end{align}
where we have used
$
\phi_\lambda(\vx_i^{t+\frac{1}{2}})  %=  \phi(\widehat\vx_i^{t+\frac{1}{2}})+\frac{1}{2\lambda} \|\widehat\vx_i^{t+\frac{1}{2}}-\vx_i^{t+\frac{1}{2}}\|^2 
\leq  \phi(\widehat\vx_i^{t})+\frac{1}{2\lambda} \|\widehat\vx_i^{t}-\vx_i^{t+\frac{1}{2}}\|^2$  and $\phi_\lambda(\vx_i^{t}) = \phi( \widehat\vx_i^{t}) + \frac{1}{2\lambda} \|\widehat\vx_i^{t}-\vx_i^t\|$.
For the second term on the right hand side of \eqref{eq:phi_update_0}, with Lemma \ref{lem:prox_diff} and \eqref{eq:x_half_update}, we have
\begin{align}
&~\sum_{i=1}^n\sum_{j=1}^{n-1}\sum_{l=j+1}^n \W_{ji}\W_{li}\|\vx_j^{t+\frac{1}{2}}-\vx_l^{t+\frac{1}{2}}\|^2  = \sum_{i=1}^n\sum_{j=1}^{n-1}\sum_{l=j+1}^n \W_{ji}\W_{li}\|\prox_{\eta r}(\vx_j^{t}-\eta\vy_j^{t})-\prox_{\eta r}(\vx_l^{t}-\eta\vy_l^t)\|^2 \nonumber\\
 \leq &~  \sum_{i=1}^n\sum_{j=1}^{n-1}\sum_{l=j+1}^n \W_{ji}\W_{li}\|(\vx_j^{t}-\eta\vy_j^{t})-(\vx_l^{t}-\eta\vy_l^t)\|^2 \nonumber\\
 = &~  \sum_{i=1}^n\sum_{j=1}^{n-1}\sum_{l=j+1}^n \W_{ji}\W_{li}\|(\vx_j^{t}-\eta\vy_j^{t})-(\bar\vx^{t}-\eta\bar\vy^t)+(\bar\vx^{t}-\eta\bar\vy^t)-(\vx_l^{t}-\eta\vy_l^t)\|^2 \nonumber\\
\leq&~ 2\sum_{i=1}^n\sum_{j=1}^{n-1}\sum_{l=j+1}^n \W_{ji}\W_{li}\|(\vx_j^{t}-\eta\vy_j^{t})-(\bar\vx^{t}-\eta\bar\vy^t)\|^2 + 2\sum_{i=1}^n\sum_{j=1}^{n-1}\sum_{l=j+1}^n \W_{ji}\W_{li}\|(\bar{\vx}^{t}-\eta\bar{\vy}^{t})-(\vx_l^{t}-\eta\vy_l^t)\|^2 \nonumber\\
\leq&~ 2\sum_{i=1}^n\sum_{j=1}^{n-1}  \W_{ji} \|(\vx_j^{t}-\eta\vy_j^{t})-(\bar\vx^{t}-\eta\bar\vy^t)\|^2 + 2\sum_{i=1}^n \sum_{l=2}^n \W_{li}\|(\bar{\vx}^{t}-\eta\bar{\vy}^{t})-(\vx_l^{t}-\eta\vy_l^t)\|^2 \nonumber \\
\leq&~4 \sum_{j=1}^{n}  \|(\vx_j^{t}-\eta\vy_j^{t})-(\bar\vx^{t}-\eta\bar\vy^t)\|^2  
\leq  8   \|\X^{t}_\perp\|^2+ 8\eta^2  \|\Y^{t}_\perp\|^2. \label{eq:2_3}
\end{align}
% {\color{blue}please explain why the second inequality is true. One more step is added.}
With \eqref{eq:phi_lambda} and \eqref{eq:2_3}, summing up \eqref{eq:phi_update_0} from $i=1$ to $n$ gives

\begin{align*}
\sum_{i=1}^n  \phi_\lambda(\vx_i^{t+1}) \leq &~  \sum_{i=1}^n \phi_\lambda( \vx_i^{t}) +{ \frac{1}{2\lambda} }\|\widehat\X^{t}-\X^{t+\frac{1}{2}}\|^2 - { \frac{1}{2\lambda} } \|\widehat\X^t - \X^t\|^2
 +{ \frac{2}{\lambda} }\left( \|\X^{t}_\perp\|^2+  \eta^2  \|\Y^{t}_\perp\|^2 \right).%\label{eq:1.18}
\end{align*}
Now taking the expectation on the above inequality and using \eqref{eq:hatx_xprox}, we have
\begin{align*}
\sum_{i=1}^n  \bbE\big[\phi_\lambda(\vx_i^{t+1}) \big] \leq &~  \sum_{i=1}^n \bbE\big[\phi_\lambda( \vx_i^{t}) \big] - \frac{1}{2\lambda} \bbE\big[ \|\widehat\X^t - \X^t\|^2\big]
 + \frac{2}{\lambda} \bbE\big[ \|\X^{t}_\perp\|^2+  \eta^2  \|\Y^{t}_\perp\|^2 \big]\\
 &~ \hspace{-2cm}+\frac{1}{2\lambda} \left(\textstyle 4 \bbE\big[\|\X^t_\perp\|^2\big]   +  \left(\textstyle 1-\frac{\eta}{2\lambda} \right) \bbE\big[\| \widehat\X^t - \X^t\|^2\big] +4\eta^2 \bbE\big[\|\Y^t_\perp\|^2\big] + 2\eta^2\sigma^2 \right).
\end{align*}
Combining like terms in the inequality above gives \eqref{eq:phi_update}.
\end{proof}

With Lemmas \ref{lem:XI_J}, \ref{lem:YI_J} and \ref{lem:weak_convex}, we are ready to prove Theorem \ref{thm:sec2}. We build the following Lyapunov function:
\begin{align*}
    \V^t = z_1 \bbE[\|\X^t_\perp\|^2]  +z_2\bbE[\|\Y^t_\perp\|^2]  +z_3\sum_{i=1}^n \bbE[ \phi_\lambda( \vx_i^{t})], 
\end{align*}
where $z_1, z_2, z_3 \geq 0$ will be determined later.

\subsection*{Proof of Theorem \ref{thm:sec2}.}
% \YX{Give the Lyapunov function.}
\begin{proof}
Denote % \YX{Put these quantities in a separate line for each reference. Apply the changes for other important definitions.}
\begin{align*}
    \Phi^t = \sum_{i=1}^n \bbE[ \phi_\lambda( \vx_i^{t})],\quad \Omega_0^t = \bbE[\|\widehat\X^{t}-\X^{t}\|^2],\quad 
    \Omega^t = \left(\bbE[\|\X^t_\perp\|^2], \bbE[\|\Y^t_\perp\|^2], \Phi^t\right)^\top.
\end{align*}
Then Lemmas \ref{lem:XI_J}, \ref{lem:YI_J} and \ref{lem:weak_convex} imply $\Omega^{t+1} \leq \A\Omega^t + \vb \Omega_0^t + \vc \sigma^2$,
where 
\begin{align*}
    \A = \begin{pmatrix}
             \frac{1+\rho^2}{2}  &~  \frac{2\rho^2}{1-\rho^2}\eta^2 &~ 0\\
             \frac{48\rho^2  L^2 }{1-\rho^2 }   &~\frac{3+\rho^2}{4}  &~ 0 \\
             \frac{4}{\lambda}  &~  \frac{4}{\lambda}\eta^2 &~ 1
        \end{pmatrix}, \quad 
\vb = 
 \begin{pmatrix}
     0 \\
    \frac{12\rho^2 L^2  }{1-\rho^2 }  \\
     - \frac{\eta}{4\lambda^2}
 \end{pmatrix}, \quad
\vc = 
 \begin{pmatrix}
        0      \\
        6n   \\    
         \frac{\eta^2}{\lambda}
 \end{pmatrix}.
\end{align*}
For any $\vz = (z_1, z_2, z_3)^\top \geq \0$, We have
\begin{align*}
    \vz^\top \Omega^{t+1} \leq \vz^\top \Omega^{t}+ (\vz^\top \A-\vz^\top)\Omega^t +\vz^\top \vb \Omega_0^t + \vz^\top\vc \sigma^2. 
\end{align*}
Take $$z_1=\frac{10}{1-\rho^2},\ z_2=\left(\frac{80\rho^2}{(1-\rho^2)^3} + \frac{16}{1-\rho^2}\right)\eta^2,\  z_3 = \lambda.$$ We have
$
\vz^\top \A-\vz^\top =  \begin{pmatrix}
       \frac{48\rho^2  L^2 }{1-\rho^2 }z_2-1,
      0, 
      0
 \end{pmatrix}.
$
Note $z_2 \leq \frac{96}{(1-\rho^2)^3}\eta^2$. Thus
\begin{align*}
\vz^\top \A-\vz^\top \leq  \begin{pmatrix} \textstyle
      \frac{4608\rho^2  L^2 }{(1-\rho^2)^4 }\eta^2-1,
      0, 
      0
 \end{pmatrix}, \ 
\vz^\top\vb \leq   \textstyle  \frac{1152\rho^2 L^2  }{(1-\rho^2)^4 }\eta^2 -  \frac{\eta}{4\lambda}, \ 
\vz^\top\vc \leq  \textstyle  \Big(  \textstyle  \frac{576n  }{(1-\rho^2)^3} + 1\Big)\eta^2 \leq \frac{577n}{(1-\rho^2)^3} \eta^2.
\end{align*}

With $\eta\leq \frac{(1-\rho^2)^4}{96\rho  L}$ and $\lambda \leq \frac{1}{96\rho  L}$,  we have
$ \vz^\top \A-\vz^\top \leq  (-\frac{1}{2}, 0, 0 )^\top$ and 
$
\vz^\top\vb \leq   
\left(12\rho L - \frac{1}{8\lambda}\right)\eta -  \frac{\eta}{8\lambda}
\leq -\frac{\eta}{8\lambda}$. Thus 
\begin{align}
    \vz^\top \Omega^{t+1} \leq  \textstyle  \vz^\top \Omega^{t} -\frac{1}{2}\bbE[\|\X^t_\perp\|^2] -\frac{\eta}{8\lambda} \Omega_0^t + \frac{577n}{(1-\rho^2)^3} \eta^2 \sigma^2.\label{eq:l_fun}
\end{align}
Hence, summing up \eqref{eq:l_fun} for $t=0,1,\ldots,T-1$ gives 
\begin{align}\label{eq:avg-Omega}
   \frac{1}{\lambda T}\sum_{t=0}^{T-1} \Omega_0^t +\frac{4}{\eta T}\sum_{t=0}^{T-1} \bbE[\|\X^t_\perp\|^2] \leq  \textstyle  \frac{8}{\eta T}  \left(\vz^\top \Omega^0 - \vz^\top \Omega^{T}\right) + \frac{577n}{(1-\rho^2)^3} 8\eta\sigma^2 .
\end{align}
From $\vy_i^{-1} =\0, \nabla F_i(\vx_i^{-1},\xi_i^{-1}) = \0, \vx_i^0 = \vx^0, \forall\, i \in \hN$, we have  
\begin{align}
    \|\X^0_\perp\|^2 = 0, \quad  \|\Y^0_\perp\|^2  = \|\nabla \bF^0(\I-\J)\|^2, \quad \Phi^0=n \phi_\lambda(\vx^0). \label{eq:initial_thm2}
\end{align}
From Assumption \ref{assu:prob}, $\phi$ is lower bounded and thus $\phi_\lambda $ is also lower bounded, i.e., there is a constant $\phi_\lambda^*$ satisfying $\phi_\lambda^* = \min_{\vx} \phi_\lambda(\vx) > -\infty$. Thus 
\begin{align}
    \Phi^T \geq n \phi_\lambda^*.\label{eq:end_thm2}
\end{align}
With \eqref{eq:initial_thm2}, \eqref{eq:end_thm2}, and the nonnegativity of $ \bbE[\|\X^T_\perp\|^2]$ and $ \bbE[\|\Y^T_\perp\|^2]$, we have
\begin{align}
\textstyle
\vz^\top \Omega^0 - \vz^\top \Omega^{T} \le %\vz^\top \Omega^0 - z_3\Phi^T = 
%z_2\bbE\big[\|\Y^0_\perp\|^2 \big] + \lambda \Phi^0 - \lambda \Phi^{T}  \le \textstyle  \frac{96 \eta^2}{(1-\rho^2)^3} \bbE\big[\|\Y^0_\perp\|^2 \big] + \lambda \Phi^0 - \lambda \Phi^{T},
\frac{96 \eta^2}{(1-\rho^2)^3} \bbE[ \|\nabla \bF^0(\I-\J)\|^2] + \lambda n \phi_\lambda(\vx^0) -\lambda n  \phi_\lambda^*. \label{eq:Omega0_OmegaT}
\end{align}
By the convexity of the Frobenius norm and  \eqref{eq:Omega0_OmegaT}, we obtain from \eqref{eq:avg-Omega} that
\begin{align*} 
    &~ \frac{1}{\lambda^2n} \bbE\big[\|\widehat\X^{\tau}-\X^{\tau}\|^2\big] +\frac{4}{n \lambda \eta}\bbE\big[\|\X^\tau_\perp\|^2\big] \leq  \frac{1}{\lambda^2n T}\sum_{t=0}^{T-1}  \bbE\big[\|\widehat\X^{t}-\X^{t}\|^2\big] +\frac{4}{n \lambda \eta T}\sum_{t=0}^{T-1} \bbE\big[\|\X^t_\perp\|^2\big]  \nonumber \\ 
   \leq &~ \textstyle  \frac{8\left( \phi_\lambda(\vx^0) - \phi_\lambda^*\right)}{ \eta T}  + \frac{4616 \eta}{\lambda(1-\rho^2)^3} \sigma^2 \textstyle  + \frac{768\eta  \bbE\left[ \|\nabla \bF^0(\I-\J)\|^2\right]}{n\lambda T(1-\rho^2)^3}.
\end{align*}
 Note $\|\nabla \phi_\lambda (\vx_i^\tau)\|^2 = \frac{\|\vx_i^\tau-\widehat\vx_i^\tau\|^2}{\lambda^{2}}$ from Lemma \ref{lem:xhat_x}, we finish the proof. % \eqref{eq:them_DProxSGT}.
\end{proof}

\section{Convergence Analysis for CDProxSGT} \label{sec:proof_CDProxSGT}
% \YX{Again give a roadmap for the analysis. Emphasize the important additional technique to the analysis for the non-compressed method.}
In this section, we analyze the convergence rate of CDProxSGT. Similar to the analysis of DProxSGT, we  establish a Lyapunov function that involves consensus errors and the Moreau envelope. But due to the compression, %the update of 
 %$\|\X_\perp\|$, $\|\Y_\perp\|$ in one step will include the two 
 compression errors $\|\widehat\X^t-\X^t\|$ and  $\|\widehat\Y^t-\Y^t\|$ will occur. Hence, we will also include the two compression errors in our Lyapunov function. %terms are also needed.  

%We give the matrix form of Algorithm \ref{alg:CDProxSGT} in \eqref{eq:alg3_1_matrix}-\eqref{eq:alg3_6_matrix} before the analysis. 

Again, we can equivalently write a matrix form of the updates 
\eqref{eq:alg3_1}-\eqref{eq:alg3_6} in 
Algorithm \ref{alg:CDProxSGT} as follows: 
\begin{gather}
    \Y^{t-\frac{1}{2}} =  \Y^{t-1} + \nabla \bF^t - \nabla \bF^{t-1}, \label{eq:alg3_1_matrix}\\
    \underline\Y^{t} = \underline\Y^{t-1} + Q_\vy\big[\Y^{t-\frac{1}{2}} - \underline\Y^{t-1}\big], \label{eq:alg3_2_matrix}\\
    \Y^{t} = \Y^{t-\frac{1}{2}} +\gamma_y \underline\Y^{t}(\W-\I), \label{eq:alg3_3_matrix}\\
    \X^{t+\frac{1}{2}} =\prox_{\eta r} \left(\X^t - \eta \Y^{t}\right), \label{eq:alg3_4_matrix}\\
    \underline\X^{t+1} = \underline\X^{t} + Q_\vx\big[\X^{t+\frac{1}{2}} - \underline\X^{t}\big], \label{eq:alg3_5_matrix}\\
    \X^{t+1} = \X^{t+\frac{1}{2}}+\gamma_x\underline\X^{t+1}(\W-\I).\label{eq:alg3_6_matrix}
\end{gather}  
When we apply the compressor to the column-concatenated matrix in \eqref{eq:alg3_2_matrix} and \eqref{eq:alg3_5_matrix}, it means applying the compressor to each column separately, i.e., %For example, applying the compressor $Q_\vx$ to $\X = [\vx_1,\vx_2,\ldots,\vx_n]$ means 
$Q_\vx[\X] = [Q_x[\vx_1],Q_x[\vx_2],\ldots,Q_x[\vx_n]]$.

Below we first analyze the progress by the half-step updates of $\Y$ and $\X$ from $t+1/2$ to $t+1$ in Lemmas \ref{lem:prepare_comp_y} and \ref{lem:Xhat_Xhalf_comp}. Then we bound the one-step consensus error and compression error for $\X$ %$\|\X_\perp\|$ and $\|\widehat\X^t-\X^t\|$ 
in Lemma \ref{lem:X_consensus_comperror} and for $\Y$ % $\|\Y_\perp\|$ and $\|\widehat\Y^t-\Y^t\|$ 
in Lemma \ref{lem:Y_consensus_comperror}. The bound of $\bbE[\phi_\lambda(\vx_i^{t+1})]$ after one-step update
is given in \ref{lem:phi_one_step}. Finally, we prove Theorem \ref{thm:sect3thm} by building a Lyapunov function that involves all the five terms.

\begin{lemma} \label{lem:prepare_comp_y} It holds that %We have
\begin{align}
    \bbE\big[\|\underline\Y^{t+1}-\Y^{t+\frac{1}{2}}\|^2\big]  \leq &~2 \alpha^2\bbE\big[\|\Y^{t} -\underline\Y^{t}\|^2\big] + 
    6 \alpha^2  n \sigma^2 + 4 \alpha^2  L^2 \bbE\big[\|\X^{t+1}-\X^{t}\|^2\big], \label{eq:2.3.2_1}  \\
    \bbE\big[\|\underline\Y^{t+1}-\Y^{t+\frac{1}{2}}\|^2\big]   \leq &~\frac{1+\alpha^2}{2}\bbE\big[\|\Y^{t} -\underline\Y^{t}\|^2\big] + \frac{6 n \sigma^2}{1-\alpha^2}  + \frac{4 L^2}{1-\alpha^2}  \bbE\big[\|\X^{t+1}-\X^{t}\|^2\big]. \label{eq:2.3.2} 
\end{align}
\end{lemma}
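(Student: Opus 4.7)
The plan is to work directly from the compression step \eqref{eq:alg3_2_matrix}, which rewrites as
\[
\underline\Y^{t+1}-\Y^{t+\frac{1}{2}} \;=\; Q_\vy\bigl[\Y^{t+\frac{1}{2}}-\underline\Y^{t}\bigr]-\bigl(\Y^{t+\frac{1}{2}}-\underline\Y^{t}\bigr),
\]
so Assumption~\ref{assu:compressor} (applied column-wise and followed by summation over the $n$ columns) yields
\[
\bbE\bigl[\|\underline\Y^{t+1}-\Y^{t+\frac{1}{2}}\|^2\bigr]\;\le\;\alpha^2\,\bbE\bigl[\|\Y^{t+\frac{1}{2}}-\underline\Y^{t}\|^2\bigr].
\]
Next I would use the gradient-tracking identity \eqref{eq:alg3_1_matrix} to decompose
\[
\Y^{t+\frac{1}{2}}-\underline\Y^{t}\;=\;(\Y^{t}-\underline\Y^{t})+(\nabla\bF^{t+1}-\nabla\bF^{t}),
\]
reducing the lemma to two sub-tasks: (a) splitting the cross term, and (b) bounding $\bbE\bigl[\|\nabla\bF^{t+1}-\nabla\bF^{t}\|^2\bigr]$.

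For sub-task (b), I would repeat the argument already used to derive \eqref{eq:y_cons12}: insert $\pm\nabla\vf^{t+1}$, use the unbiasedness and conditional independence of $\Xi^{t+1}$ from everything at time $t$ to kill the cross term, then invoke the variance bound in Assumption~\ref{assu:stoc_grad}(ii) together with $L$-smoothness of each $f_i$. This yields
\[
\bbE\bigl[\|\nabla\bF^{t+1}-\nabla\bF^{t}\|^2\bigr]\;\le\;3n\sigma^2+2L^2\,\bbE\bigl[\|\X^{t+1}-\X^{t}\|^2\bigr].
\]

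For sub-task (a), two different splittings give the two conclusions. For \eqref{eq:2.3.2_1}, I would apply the naive $\|a+b\|^2\le 2\|a\|^2+2\|b\|^2$, which immediately produces the coefficients $2\alpha^2$, $6\alpha^2 n\sigma^2$, and $4\alpha^2 L^2$. For \eqref{eq:2.3.2}, I would use Young's inequality $\|a+b\|^2\le(1+c)\|a\|^2+(1+1/c)\|b\|^2$ and tune $c=\frac{1-\alpha^2}{2\alpha^2}$ so that the leading coefficient on $\|\Y^{t}-\underline\Y^{t}\|^2$ becomes $\alpha^2(1+c)=\frac{1+\alpha^2}{2}$. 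The companion factor is then $\alpha^2(1+1/c)=\frac{\alpha^2(1+\alpha^2)}{1-\alpha^2}\le\frac{2}{1-\alpha^2}$ since $\alpha\in[0,1)$, which produces exactly the $\frac{6n\sigma^2}{1-\alpha^2}$ and $\frac{4L^2}{1-\alpha^2}$ constants.

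The routine parts are the Young splitting and the column-wise application of the compression assumption; the only subtle point is step (b), where one must be careful about measurability, namely that $\underline\Y^{t}$, $\vx_i^{t}$ and $\vx_i^{t+1}$ are all determined before the fresh samples $\Xi^{t+1}$ are drawn, so that the conditional expectation argument that makes the cross term vanish is legitimate. Beyond that, the lemma is a direct consequence of the compression bound combined with the one-step stochastic-gradient difference estimate already established inside the proof of Lemma~\ref{lem:YI_J}.
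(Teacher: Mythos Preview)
Your proposal is correct and follows essentially the same route as the paper: apply Assumption~\ref{assu:compressor} to the compression step, decompose $\Y^{t+\frac{1}{2}}-\underline\Y^{t}$ via \eqref{eq:alg3_1_matrix}, invoke the bound \eqref{eq:y_cons12} on $\bbE\bigl[\|\nabla\bF^{t+1}-\nabla\bF^{t}\|^2\bigr]$, and then tune the Young's-inequality parameter two ways. The only cosmetic difference is that for \eqref{eq:2.3.2} the paper chooses the splitting parameter $\alpha_0=\tfrac{1-\alpha^2}{2}$ (so the leading coefficient is only \emph{bounded} by $\tfrac{1+\alpha^2}{2}$), whereas your choice $c=\tfrac{1-\alpha^2}{2\alpha^2}$ hits $\tfrac{1+\alpha^2}{2}$ exactly; both yield the same final constants.
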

\begin{proof}
From \eqref{eq:alg3_1} and \eqref{eq:alg3_2}, we have
\begin{align}
    &~ \bbE\big[\|\underline\Y^{t+1}-\Y^{t+\frac{1}{2}}\|^2\big] =    \bbE\big[\bbE_Q\big[\|Q_\vy\big[\Y^{t+\frac{1}{2}}-\underline\Y^{t}\big]- (\Y^{t+\frac{1}{2}}-\underline\Y^{t})\|^2\big]\big] \nonumber\\
    \leq &~ \alpha^2\bbE\big[\|\Y^{t+\frac{1}{2}}-\underline\Y^{t}\|^2\big] =  \alpha^2\bbE\big[\|\Y^{t} -\underline\Y^{t} +\nabla \bF^{t+1}-\nabla \bF^{t}\|^2\big]\nonumber\\
    \leq &~ \alpha^2(1+\alpha_0)\bbE\big[\|\Y^{t} -\underline\Y^{t}\|^2\big] + \alpha^2(1+\alpha_0^{-1})\bbE\big[\|\nabla \bF^{t+1}-\nabla \bF^{t}\|^2\big] \nonumber\\
    \leq &~ \alpha^2(1+\alpha_0)\bbE\big[\|\Y^{t} -\underline\Y^{t}\|^2\big] + \alpha^2(1+\alpha_0^{-1}) 
    \left(3 n \sigma^2 + 2 L^2 \bbE\big[\|\X^{t+1}-\X^{t}\|^2\big]\right), \label{eq:2.3.2_0}
\end{align}
% \YX{Mention we used the assumption on $Q$ in the first inequality.}
where the first inequality holds by Assumption \ref{assu:compressor}, 
$\alpha_0$ can be any positive number,
and the last inequality holds by \eqref{eq:y_cons12} which still holds for CDProxSGT. Taking $\alpha_0=1$ in \eqref{eq:2.3.2_0} gives \eqref{eq:2.3.2_1}. Letting 
$\alpha_0=\frac{1-\alpha^2}{2}$ in  \eqref{eq:2.3.2_0}, we obtain $\alpha^2(1+\alpha_0) = (1-(1-\alpha^2))(1+\frac{1-\alpha^2}{2}) \leq \frac{1+\alpha^2}{2}$ and   $\alpha^2(1+\alpha_0^{-1}) \leq \frac{2}{1-\alpha^2}$,  and thus \eqref{eq:2.3.2} follows. 
\end{proof}
 
\begin{lemma} \label{lem:Xhat_Xhalf_comp} 
Let $\eta\leq \lambda \leq\frac{1}{4 L}$. Then
\begin{align}
    \bbE\big[\|\widehat\X^{t}-\X^{t+\frac{1}{2}}\|^2\big]  
        \leq &~ 4\bbE\big[\|\X^t_\perp\|^2\big]   +  \left( 1-\frac{\eta}{2\lambda} \right) \bbE\big[\| \widehat\X^t - \X^t\|^2\big] +4\eta^2 \bbE\big[\|\Y^t_\perp\|^2\big] + 2\eta^2\sigma^2, \label{eq:hatx_xprox_comp}\\
    \bbE\big[\|\underline\X^{t+1}-\X^{t+\frac{1}{2}}\|^2\big]
        \leq &~ 3\alpha^2 \left(\bbE\big[ \|\X^t-\underline\X^{t}\|^2\big] +  \bbE\big[\|\X^{t+\frac{1}{2}}-\widehat{\X}^t\|^2\big]+ \bbE\big[\|\widehat{\X}^t - \X^t\|^2\big]\right), \label{eq:X_-X_1}\\ 
    \bbE\big[\|\underline\X^{t+1}-\X^{t+\frac{1}{2}}\|^2\big] \leq
        &~ \frac{16}{1-\alpha^2}\Big( \bbE\big[\|\X^t_\perp\|^2\big]+ \eta^2\bbE\big[\|\Y^t_\perp\|^2\big]\Big) +  \frac{1+\alpha^2}{2} \bbE\big[\|\X^t-\underline\X^{t}\|^2\big]    \nonumber\\ 
        &~ +\frac{8}{1-\alpha^2}\left( \bbE\big[\| \widehat\X^t - \X^t\|^2\big] +\eta^2\sigma^2\right). \label{eq:2.2.2}
\end{align}
Further, if $\gamma_x\leq \frac{2\sqrt{3}-3}{6\alpha}$, then 
\begin{align}
  \bbE\big[\| \X^{t+1} - \X^{t}\|^2\big] \leq 
    &~ 30\bbE\big[\|\X^t_\perp\|^2\big] +4\sqrt{3} \alpha \gamma_x \bbE\big[\|\X^t-\underline\X^{t}\|^2\big] +16\eta^2 \bbE\big[\|\Y^t_\perp\|^2\big] \nonumber \\
    &~ + 8\bbE\big[\| \widehat\X^t - \X^t\|^2\big] +  8\eta^2\sigma^2. \label{eq:2.2.3}    
\end{align}
\end{lemma}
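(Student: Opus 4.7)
}

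The plan is to handle the four bounds in sequence, reusing computations across them.

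First I would observe that the update $\X^{t+\frac{1}{2}} = \prox_{\eta r}(\X^t - \eta \Y^t)$ in CDProxSGT is \emph{identical} to the one in DProxSGT. Hence \eqref{eq:hatx_xprox_comp} is nothing but Lemma~\ref{lem:Xhat_Xhalf}, and its proof carries over verbatim; nothing new is needed. The role of the other CDProxSGT updates (compression of $\underline\X$ and $\underline\Y$, the mixing with $\widehat\W_x$) only shows up when we pass from $\X^{t+\frac{1}{2}}$ to $\X^{t+1}$.

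Next, for \eqref{eq:X_-X_1}, I would use the compression identity
$\underline\X^{t+1} - \X^{t+\frac{1}{2}} = Q_\vx\big[\X^{t+\frac{1}{2}} - \underline\X^t\big] - \big(\X^{t+\frac{1}{2}} - \underline\X^t\big)$
from \eqref{eq:alg3_5_matrix}. Taking $\bbE_Q$ and applying Assumption~\ref{assu:compressor} column-wise gives $\bbE_Q\|\underline\X^{t+1} - \X^{t+\frac{1}{2}}\|^2 \leq \alpha^2 \|\X^{t+\frac{1}{2}} - \underline\X^t\|^2$. I would then split
$\X^{t+\frac{1}{2}} - \underline\X^t = (\X^{t+\frac{1}{2}} - \widehat\X^t) + (\widehat\X^t - \X^t) + (\X^t - \underline\X^t)$
and use $\|A+B+C\|^2 \le 3(\|A\|^2+\|B\|^2+\|C\|^2)$, which is exactly the form of \eqref{eq:X_-X_1}. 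For the sharper bound \eqref{eq:2.2.2}, I would repeat the same argument but replace the uniform triangle split by a weighted Young inequality $\|A+B+C\|^2\le (1+\beta)\|C\|^2 + 2(1+\beta^{-1})(\|A\|^2+\|B\|^2)$ with the choice $\beta = \tfrac{1-\alpha^2}{2\alpha^2}$, so that $\alpha^2(1+\beta) = \tfrac{1+\alpha^2}{2}$. Substituting \eqref{eq:hatx_xprox_comp} to bound $\bbE\|\X^{t+\frac{1}{2}}-\widehat\X^t\|^2$ and using $\eta\le \lambda$ to absorb $(2-\eta/(2\lambda))\le 2$ yields \eqref{eq:2.2.2}.

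The last inequality \eqref{eq:2.2.3} is the one requiring real work. I would start from the compact form of the model communication step,
\[
\X^{t+1} = \X^{t+\frac{1}{2}}\widehat\W_x + \gamma_x\big(\underline\X^{t+1}-\X^{t+\frac{1}{2}}\big)(\W-\I),
\]
from \eqref{eq:compX_hatW}, and subtract $\X^t$; using $\widehat\W_x-\I = \gamma_x(\W-\I)$ and $\X^t(\W-\I)=\X^t_\perp(\W-\I)$, this rewrites as
\[
\X^{t+1} - \X^t = (\X^{t+\frac{1}{2}} - \X^t)\widehat\W_x + \gamma_x \X^t_\perp(\W-\I) + \gamma_x(\underline\X^{t+1}-\X^{t+\frac{1}{2}})(\W-\I).
\]
Then I would bound $\|\X^{t+1}-\X^t\|^2$ by a weighted triangle-type expansion, using $\|\widehat\W_x\|_2\le 1$ and $\|\W-\I\|_2\le 2$, then further split $\X^{t+\frac{1}{2}}-\X^t = (\X^{t+\frac{1}{2}}-\widehat\X^t) + (\widehat\X^t-\X^t)$ and substitute \eqref{eq:hatx_xprox_comp} and \eqref{eq:X_-X_1}. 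The raw substitution naturally produces $\mathcal{O}(\gamma_x^2\alpha^2)$ coefficients on $\bbE\|\X^t-\underline\X^t\|^2$; to reduce one power of $\gamma_x$ and an $\alpha$ and end up with the stated $4\sqrt3\,\alpha\gamma_x$, I would apply Cauchy--Schwarz to the cross term $2\gamma_x\langle(\X^{t+\frac12}-\X^t)\widehat\W_x,\,(\underline\X^{t+1}-\X^{t+\frac12})(\W-\I)\rangle$ with a coefficient tuned so that the $\gamma_x^2\alpha^2$ contribution is dominated by $4\sqrt 3\,\gamma_x\alpha$ precisely under the hypothesis $\gamma_x \le \tfrac{2\sqrt3-3}{6\alpha}$, which is exactly the algebraic condition ensuring $12\gamma_x^2\alpha^2$ can be absorbed into $4\sqrt3\,\gamma_x\alpha$ times the same quantity.

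The main obstacle I anticipate is precisely this last bookkeeping in \eqref{eq:2.2.3}: choosing the Young-inequality weights so that the coefficients on $\bbE\|\X^t_\perp\|^2$, $\bbE\|\widehat\X^t-\X^t\|^2$, $\eta^2\bbE\|\Y^t_\perp\|^2$ and $\eta^2\sigma^2$ collapse exactly to $30$, $8$, $16$ and $8$, while the $\|\X^t-\underline\X^t\|^2$ coefficient becomes a \emph{linear} (not quadratic) function of $\gamma_x\alpha$. Everything else (the compressor inequality, the triangle splits, and substitution of the already-proved bounds) is routine.
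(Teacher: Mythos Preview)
Your proposal is correct and follows essentially the same approach as the paper. For \eqref{eq:2.2.3} the paper groups your first two summands together as $\X^{t+\frac12}\widehat\W_x-\X^t$ and invokes the DProxSGT bound \eqref{eq:Xplus1-X} (with $\widehat\W_x$ in place of $\W$) before applying a single Young inequality with parameter $\alpha_2=2\gamma_x\alpha$; this is exactly your ``tune the cross-term coefficient to $\Theta(\gamma_x\alpha)$'' idea, just organized as a two-term rather than three-term split, and the condition $\gamma_x\le\frac{2\sqrt3-3}{6\alpha}$ is precisely what makes $3(1+2\gamma_x\alpha)^2\le4$ and $12\gamma_x^2\alpha^2+6\gamma_x\alpha\le4\sqrt3\,\gamma_x\alpha$ hold.
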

\begin{proof}
The proof of \eqref{eq:hatx_xprox_comp} is the same as that of Lemma \ref{lem:Xhat_Xhalf} because \eqref{eq:alg3_4} and \eqref{eq:x_y_mean} are the same as \eqref{eq:x_half_update} and \eqref{eq:x_y_mean}.

For $\underline\X^{t+1}-\X^{t+\frac{1}{2}}$, we have from \eqref{eq:alg3_5} that
\begin{align}
    &~ \bbE\big[\|\underline\X^{t+1}-\X^{t+\frac{1}{2}}\|^2\big] = \bbE\big[\bbE_Q\big[\|  Q_\vx\big[\X^{t+\frac{1}{2}} - \underline\X^{t}\big] -(\X^{t+\frac{1}{2}}-\underline\X^{t})\|^2\big]\big] \nonumber \\
   \leq &~ \alpha^2 \bbE\big[\|\X^{t+\frac{1}{2}}-\underline\X^{t}\|^2\big] = \alpha^2 \bbE\big[\|\X^{t+\frac{1}{2}}-\widehat{\X}^t+\widehat{\X}^t - \X^t+\X^t-\underline\X^{t}\|^2\big] \nonumber \\ 
 \le & ~ \alpha^2(1+\alpha_1)\bbE\big[ \|\X^t-\underline\X^{t}\|^2\big] +  \alpha^2(1+\alpha_1^{-1})\bbE\big[\|\X^{t+\frac{1}{2}}-\widehat{\X}^t + \widehat{\X}^t - \X^t\|^2\big]  \nonumber \\ 
\leq &~ \alpha^2(1+\alpha_1)\bbE\big[ \|\X^t-\underline\X^{t}\|^2\big] + 2\alpha^2(1+\alpha_1^{-1})\bbE\big[\|\X^{t+\frac{1}{2}}-\widehat{\X}^t\|^2\big]+2\alpha^2(1+\alpha_1^{-1})\bbE\big[\|\widehat{\X}^t - \X^t\|^2\big], \label{eq:X_-X_0} 
\end{align}
where $\alpha_1$ can be any positive number.
Taking $\alpha_1 = 2$ in \eqref{eq:X_-X_0} gives  \eqref{eq:X_-X_1}.
Taking $\alpha_1 = \frac{1-\alpha^2}{2}$ in \eqref{eq:X_-X_0} and plugging \eqref{eq:hatx_xprox_comp} give \eqref{eq:2.2.2}.
% \begin{align}
%     \bbE\big[\|\underline\X^{t+1}-\X^{t+\frac{1}{2}}\|^2\big] 
% \leq \frac{1+\alpha^2}{2} \bbE\big[\|\X^t-\underline\X^{t}\|^2\big]+ \frac{4}{1-\alpha^2}\bbE\big[\|\X^{t+\frac{1}{2}}-\widehat{\X}^t\|^2\big] +
% \frac{4}{1-\alpha^2}\bbE\big[\|\widehat{\X}^t - \X^t\|^2\big].\label{eq:X_-X}
% \end{align}
% Plugging \eqref{eq:hatx_xprox_comp} to \eqref{eq:X_-X} gives \eqref{eq:2.2.2}.

About $\bbE[\| \X^{t+1} - \X^{t}\|^2]$, similar to \eqref{eq:Xplus1-X}, we have from \eqref{eq:compX_hatW} that
\begin{align}
&~   \bbE\big[\| \X^{t+1} - \X^{t}\|^2\big] = \bbE\big[\|\X^{t+\frac{1}{2}}\widehat\W_x  - \X^{t} + \gamma_x(\underline\X^{t+1}-\X^{t+\frac{1}{2}})(\W-\I)\|^2\big] \nonumber \\
\leq&~(1+\alpha_2)  \bbE\big[\|\X^{t+\frac{1}{2}}\widehat\W_x-\X^t\|^2\big] + (1+\alpha_2^{-1}) \bbE\big[\|\gamma_x(\underline\X^{t+1}-\X^{t+\frac{1}{2}})(\W-\I)\|^2\big]\nonumber\\
 \overset{\eqref{eq:Xplus1-X}, \eqref{eq:X_-X_1}}\leq &~ (1+\alpha_2) \left(  3\bbE\big[\|\X^{t+\frac{1}{2}}-\widehat\X^t \|^2\big] +3\bbE\big[\|\widehat\X^t-\X^t \|^2\big] + 12 \bbE\big[\|\X^t_\perp\|^2\big]\right) \nonumber \\
&~ + (1+\alpha_2^{-1})4\gamma_x^2 \cdot 3\alpha^2 \left(  \bbE\big[\|\X^{t+\frac{1}{2}}-\widehat\X^t \|^2\big] +  \bbE\big[\|\widehat\X^t-\X^t \|^2\big] +  \bbE\big[\|\X^t-\underline\X^{t}\|^2\big]\right) \nonumber \\
\leq &~ 4\bbE\big[\|\X^{t+\frac{1}{2}}-\widehat\X^t \|^2\big]  + 4 \bbE\big[\|\widehat\X^t-\X^t \|^2\big] +  14\bbE\big[\|\X^t _\perp\|^2\big]  + 4\sqrt{3} \alpha \gamma_x \bbE\big[\|\X^t-\underline\X^{t}\|^2\big], \nonumber
\end{align}
where in the first inequality $\alpha_2$ could be any positive number,  in the second inequality we use \eqref{eq:X_-X_1},
and in the last inequality we take $\alpha_2 = 2\gamma_x \alpha$ and thus with  $\gamma_x\leq \frac{2\sqrt{3}-3}{6\alpha}$, it holds
$ 3(1+\alpha_2) +12\gamma_x^2\alpha^2(1+\alpha_2^{-1}) = 3(1+2\gamma_x\alpha)^2  \leq 4$,
   $12(1+\alpha_2)\leq %12(1+\frac{2\sqrt{3}-3}{3}) = 
   8\sqrt{3}\leq 14$,  
$(1+\alpha_2^{-1})4\gamma_x^2\cdot3\alpha^2 \leq % 6\gamma_x\alpha ((1+\frac{2\sqrt{3}-3}{3}))\leq 
4\sqrt{3} \alpha \gamma_x$.
Then plugging \eqref{eq:hatx_xprox_comp} into the inequality above, we obtain \eqref{eq:2.2.3}.
\end{proof}

\begin{lemma}\label{lem:X_consensus_comperror} 
Let $\eta\leq \lambda \leq\frac{1}{4 L}$ and $\gamma_x\leq \min\{\frac{ (1-\widehat\rho_x^2)^2}{60\alpha},   \frac{1-\alpha^2}{25}\}$.
Then the consensus error and compression error of $\X$ can be bounded  by
\begin{align} 
    \bbE\big[\|\X^{t+1}_\perp\|^2\big] 
    \leq  &~
    \frac{3+\widehat\rho_x^2}{4} \bbE\big[\|\X^t_\perp\|^2\big]  + 2\alpha \gamma_x (1-\widehat\rho_x^2) \bbE\big[\|\X^t-\underline\X^{t}\|^2\big]
   + \frac{9}{4(1-\widehat\rho_x^2)}\eta^2  \bbE\big[\|\Y^t_\perp\|^2\big] \nonumber\\
    &~  + 4\alpha \gamma_x (1-\widehat\rho_x^2)\bbE\big[\| \widehat\X^t - \X^t\|^2\big]  + 4 \alpha \gamma_x (1-\widehat\rho_x^2)\eta^2\sigma^2,  \label{eq:2.4.1}\\
 \bbE\big[\|\X^{t+1}-\underline\X^{t+1}\|^2\big] 
  \leq &~ \frac{21}{1-\alpha^2} \bbE\big[\|\X^t_\perp\|^2\big] + \frac{3+\alpha^2}{4}\bbE\big[\|\X^t-\underline\X^{t}\|^2\big] +\frac{21}{1-\alpha^2} \eta^2 \bbE\big[\|\Y^t_\perp\|^2\big]\nonumber\\
&~ + \frac{11}{1-\alpha^2}  \bbE\big[\| \widehat\X^t - \X^t\|^2\big]  + \frac{11}{1-\alpha^2} \eta^2\sigma^2. \label{eq:2.5.1}
\end{align}
\end{lemma}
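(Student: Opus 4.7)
The plan is to trace how the compressed update $\X^{t+1} = \X^{t+\frac{1}{2}}\widehat\W_x + \gamma_x(\underline\X^{t+1}-\X^{t+\frac{1}{2}})(\W-\I)$ in \eqref{eq:compX_hatW} transports the consensus and compression errors, then combine this with Young's inequality and the three building blocks already on hand: the proximal step bound \eqref{eq:hatx_xprox_comp}, the compression step bound \eqref{eq:X_-X_1} (or its refinement \eqref{eq:2.2.2}), and the nonexpansiveness fact $\|\X^{t+\frac{1}{2}}_\perp\|^2 \le \|\X^t_\perp - \eta\Y^t_\perp\|^2$, which follows from Lemma~\ref{lem:prox_diff} and $\|\I-\J\|_2 = 1$ exactly as in the proof of Lemma~\ref{lem:XI_J}.

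For the consensus bound \eqref{eq:2.4.1}, I would first right-multiply the update by $\I-\J$. Using $\widehat\W_x\J = \J\widehat\W_x = \J$ and $\J(\W-\I) = 0$, this yields
\[
\X^{t+1}_\perp = \X^{t+\frac{1}{2}}_\perp(\widehat\W_x - \J) + \gamma_x(\underline\X^{t+1} - \X^{t+\frac{1}{2}})(\W - \I).
\]
A Young split with parameter $\beta_0$ produces an $(1+\beta_0)\widehat\rho_x^2\|\X^{t+\frac{1}{2}}_\perp\|^2$ piece (using $\|\widehat\W_x-\J\|_2=\widehat\rho_x$) and an $(1+\beta_0^{-1})\cdot 4\gamma_x^2\|\underline\X^{t+1}-\X^{t+\frac{1}{2}}\|^2$ piece (using $\|\W-\I\|_2 \le 2$). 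Bounding $\|\X^{t+\frac{1}{2}}_\perp\|^2 \le \|\X^t_\perp - \eta\Y^t_\perp\|^2$ and applying a second Young split with parameter $\beta_1$ separates $\|\X^t_\perp\|^2$ from $\eta^2\|\Y^t_\perp\|^2$. The second piece is handled by \eqref{eq:X_-X_1}, with its $\|\X^{t+\frac{1}{2}}-\widehat\X^t\|^2$ term expanded through \eqref{eq:hatx_xprox_comp}. Selecting $\beta_0,\beta_1$ of order $(1-\widehat\rho_x^2)/\widehat\rho_x^2$ collapses the coefficient of $\|\X^t_\perp\|^2$ to $\tfrac{3+\widehat\rho_x^2}{4}$, and the condition $\gamma_x \le \tfrac{(1-\widehat\rho_x^2)^2}{60\alpha}$ shrinks the $\gamma_x^2\alpha^2$ prefactors coming from \eqref{eq:X_-X_1} down to the target $\alpha\gamma_x(1-\widehat\rho_x^2)$ level.

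For the compression bound \eqref{eq:2.5.1}, I would start from $\X^{t+1} - \underline\X^{t+1} = -(\underline\X^{t+1}-\X^{t+\frac{1}{2}}) + (\X^{t+1}-\X^{t+\frac{1}{2}})$ and split via Young with parameter $\beta_2$. The $(1+\beta_2^{-1})\|\underline\X^{t+1}-\X^{t+\frac{1}{2}}\|^2$ piece is exactly in the right form via \eqref{eq:2.2.2}, which already carries the sharp contraction $\tfrac{1+\alpha^2}{2}\|\X^t-\underline\X^t\|^2$; picking $\beta_2 = \tfrac{2(1+\alpha^2)}{1-\alpha^2}$ yields $(1+\beta_2^{-1})\tfrac{1+\alpha^2}{2} = \tfrac{3+\alpha^2}{4}$ and $(1+\beta_2) = \tfrac{3+\alpha^2}{1-\alpha^2} \le \tfrac{4}{1-\alpha^2}$. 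The remainder $(1+\beta_2)\|\X^{t+1}-\X^{t+\frac{1}{2}}\|^2$ is controlled via $\X^{t+1}-\X^{t+\frac{1}{2}} = \gamma_x\underline\X^{t+1}(\W-\I) = \gamma_x\underline\X^{t+1}_\perp(\W-\I)$ (using $\J(\W-\I)=0$), then $\|\underline\X^{t+1}_\perp\|^2 \le 2\|\underline\X^{t+1}-\X^{t+\frac{1}{2}}\|^2 + 2\|\X^{t+\frac{1}{2}}_\perp\|^2$, and finally feeding in \eqref{eq:2.2.2} together with $\|\X^{t+\frac{1}{2}}_\perp\|^2 \le \|\X^t_\perp - \eta\Y^t_\perp\|^2$. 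The smallness $\gamma_x \le \tfrac{1-\alpha^2}{25}$ ensures the surviving $\gamma_x^2/(1-\alpha^2)$ factors are dominated by universal constants, yielding the $\tfrac{21}{1-\alpha^2}$ and $\tfrac{11}{1-\alpha^2}$ coefficients as stated.

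The main obstacle will be the simultaneous calibration of the Young parameters $\beta_0,\beta_1,\beta_2$ and the two stepsize thresholds on $\gamma_x$: the two contraction factors $\tfrac{3+\widehat\rho_x^2}{4}$ and $\tfrac{3+\alpha^2}{4}$ must appear \emph{sharply}, while the cross-error contributions from \eqref{eq:X_-X_1}, \eqref{eq:2.2.2}, and \eqref{eq:hatx_xprox_comp} must be absorbed with concrete constants matching the statement. The hypothesis $\gamma_x \le \min\{\tfrac{(1-\widehat\rho_x^2)^2}{60\alpha}, \tfrac{1-\alpha^2}{25}\}$ is precisely the amount of slack needed to close both budgets simultaneously, and getting the explicit constants right is the most tedious part of the argument.
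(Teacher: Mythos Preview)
Your plan is essentially correct and follows the same skeleton as the paper's proof: Young-split the compressed update \eqref{eq:compX_hatW} and feed in the building blocks \eqref{eq:hatx_xprox_comp}, \eqref{eq:X_-X_1}, \eqref{eq:2.2.2}, and the nonexpansive bound on $\|\X^{t+\frac{1}{2}}_\perp\|^2$. Two points of divergence are worth noting.

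For the consensus bound \eqref{eq:2.4.1}, the paper does not re-derive the $\widehat\rho_x^2\|\X^{t+\frac{1}{2}}_\perp\|^2$ step followed by a second Young split as you sketch; instead it invokes the already-packaged estimate \eqref{eq:XComp_consensus1} (the analog of Lemma~\ref{lem:XI_J} with $\widehat\W_x$), which directly delivers $\tfrac{1+\widehat\rho_x^2}{2}\|\X^t_\perp\|^2 + \tfrac{2\widehat\rho_x^2\eta^2}{1-\widehat\rho_x^2}\|\Y^t_\perp\|^2$. The outer Young parameter is then taken \emph{small}, specifically $\alpha_3 = \tfrac{7\alpha\gamma_x}{1-\widehat\rho_x^2}$, not of order $(1-\widehat\rho_x^2)/\widehat\rho_x^2$ as you wrote; only the inner split (already absorbed into \eqref{eq:XComp_consensus1}) is of that size. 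This is a cosmetic difference: your $\beta_0,\beta_1$ route reaches the same place once $\beta_0$ is taken small.

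For the compression bound \eqref{eq:2.5.1}, the paper uses a sharper decomposition than yours: rather than splitting $\X^{t+1}-\underline\X^{t+1}$ into $-(\underline\X^{t+1}-\X^{t+\frac{1}{2}})$ plus $\gamma_x\underline\X^{t+1}(\W-\I)$, it writes
\[
\X^{t+1}-\underline\X^{t+1} = (\underline\X^{t+1}-\X^{t+\frac{1}{2}})\big(\gamma_x(\W-\I)-\I\big) + \gamma_x\X^{t+\frac{1}{2}}_\perp(\W-\I),
\]
so that after Young, $\|\underline\X^{t+1}-\X^{t+\frac{1}{2}}\|^2$ appears \emph{once}, with multiplier $(1+\alpha_4)(1+2\gamma_x)^2$. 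Your route instead bounds $\|\underline\X^{t+1}_\perp\|^2 \le 2\|\underline\X^{t+1}-\X^{t+\frac{1}{2}}\|^2 + 2\|\X^{t+\frac{1}{2}}_\perp\|^2$ and thereby re-introduces $\|\underline\X^{t+1}-\X^{t+\frac{1}{2}}\|^2$ into the second piece. With your specific choice $\beta_2 = \tfrac{2(1+\alpha^2)}{1-\alpha^2}$ the coefficient on $\|\X^t-\underline\X^t\|^2$ then strictly exceeds $\tfrac{3+\alpha^2}{4}$ (by $4\gamma_x^2(1+\beta_2)(1+\alpha^2)$); you would need to take $\beta_2$ a bit larger to create slack for this extra term. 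The paper's grouping sidesteps this recalibration entirely, which is what buys the clean constants $\tfrac{21}{1-\alpha^2}$ and $\tfrac{11}{1-\alpha^2}$ under exactly $\gamma_x \le \tfrac{1-\alpha^2}{25}$.
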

\begin{proof}
First, let us consider the consensus error of $\X$. 
With the update \eqref{eq:compX_hatW}, we have
\begin{align}
    %\X^{t+1}_\perp =&~ \X^{t+\frac{1}{2}}\widehat\W (\I- \J) + \gamma_x(\underline\X^{t+1}-\X^{t+\frac{1}{2}})(\W-\I) (\I- \J), \nonumber\\
     \bbE\big[\|\X^{t+1}_\perp\|^2\big] \leq &~   (1+\alpha_3)\bbE\big[\|\X^{t+\frac{1}{2}}\widehat\W_x (\I- \J)\|^2\big] +(1+\alpha_3^{-1}) \bbE\big[\|\gamma_x(\underline\X^{t+1}-\X^{t+\frac{1}{2}})(\W-\I)\|^2\big], \nonumber\\
      \leq &~ (1+\alpha_3)\bbE\big[\|\X^{t+\frac{1}{2}}(\widehat\W_x - \J)\|^2\big] + (1+\alpha_3^{-1})4\gamma_x^2\bbE\big[\|\underline\X^{t+1}-\X^{t+\frac{1}{2}}\|^2\big], \label{eq:XComp_consensus0}
\end{align}
where $\alpha_3$ is any positive number, and $\|\W-\I\|_2\leq 2$ is used.
The first term in the right hand side of \eqref{eq:XComp_consensus0} can be processed similarly as the non-compressed version in Lemma \ref{lem:XI_J} by replacing $\W$ by $\widehat\W_x$, namely,
\begin{align}
    %  &~ 
      \bbE\big[\|\X^{t+\frac{1}{2}} (\widehat\W_x-\J)\|^2\big]
  \leq &~  \textstyle  \frac{1+\widehat\rho^2_x}{2} \bbE\big[\|\X^{t}_\perp\|^2\big]+ \frac{2\widehat\rho^2_x  \eta^2 }{1-\widehat\rho^2_x}  \bbE\big[\| \Y^{t}_\perp \|^2\big]. \label{eq:XComp_consensus1}
\end{align}  
% where $\widehat\rho_x\leq 1$ is used in the last inequality. 
% With \eqref{eq:XComp_consensus1}, \eqref{eq:X_-X_1} and \eqref{eq:hatx_xprox_comp}, \eqref{eq:XComp_consensus0} becomes \YX{Do you mean to directly plug \eqref{eq:XComp_consensus1}, \eqref{eq:X_-X_1} and \eqref{eq:hatx_xprox_comp} into \eqref{eq:XComp_consensus0}?}
Plugging \eqref{eq:XComp_consensus1} and  \eqref{eq:X_-X_1}  into \eqref{eq:XComp_consensus0} gives
\begin{align*}
     &~ \bbE\big[\|\X^{t+1}_\perp\|^2\big] \leq ~ (1+\alpha_3)\left( \textstyle  \frac{1+\widehat\rho^2_x}{2} \bbE\big[\|   \X^{t}_\perp \|^2\big]+ \frac{2\widehat\rho^2_x  \eta^2 }{1-\widehat\rho^2_x}  \bbE\big[\| \Y^{t}_\perp \|^2\big]\right) \\
    &~ + (1+\alpha_3^{-1})12 \alpha^2 \gamma_x^2 \left(\bbE\big[ \|\X^t-\underline\X^{t}\|^2\big] +  \bbE\big[\|\X^{t+\frac{1}{2}}-\widehat{\X}^t\|^2\big]+ \bbE\big[\|\widehat{\X}^t - \X^t\|^2\big]\right)\\
    \overset{\eqref{eq:hatx_xprox_comp}}{\leq} &~
     \left(  \textstyle \frac{1+\widehat\rho_x^2}{2}(1+\alpha_3) + 48 \alpha^2 \gamma_x^2  (1+\alpha_3^{-1})   \right) \bbE\big[\|\X^t_\perp\|^2\big] \nonumber\\
    &~+ 12\alpha^2 \gamma_x^2  (1+\alpha_3^{-1}) \bbE\big[\|\X^t-\underline\X^{t}\|^2\big] +\left( \textstyle \frac{2\widehat\rho_x^2}{1-\widehat\rho_x^2}(1+\alpha_3) +48 \alpha^2 \gamma_x^2  (1+\alpha_3^{-1})\right)\eta^2  \bbE\big[\|\Y^t_\perp\|^2\big] \\
&~ +24 \alpha^2 \gamma_x^2  (1+\alpha_3^{-1}) \bbE\big[\| \widehat\X^t - \X^t\|^2\big]  +24 \alpha^2 \gamma_x^2  (1+\alpha_3^{-1})\eta^2\sigma^2.
\end{align*}  
Let $\alpha_3 = \frac{7\alpha\gamma_x}{1-\widehat\rho_x^2}$ and $\gamma_x\leq \frac{(1-\widehat\rho_x^2)^2}{60\alpha}$. 
Then $\alpha^2 \gamma_x^2  (1+\alpha_3^{-1})=\alpha\gamma_x (\alpha\gamma_x+\frac{1-\widehat\rho_x^2}{7})\leq \alpha\gamma_x (\frac{ (1-\widehat\rho_x^2)^2}{60}+\frac{1-\widehat\rho_x^2}{7})\leq   \frac{\alpha\gamma_x (1-\widehat\rho_x^2)}{6}$
and 
% $ \frac{2\widehat\rho_x^2}{1-\widehat\rho_x^2}\alpha_3+ 48 \alpha^2 \gamma_x^2  \alpha_3^{-1} \leq   \frac{1+\widehat\rho_x^2}{1-\widehat\rho_x^2}  7 \alpha \gamma_x  \leq  \frac{14\alpha \gamma_x}{1-\widehat\rho_x^2}$.
\begin{align*}
 &~  \textstyle  \frac{1+\widehat\rho_x^2}{2}(1+\alpha_3) + 48 \alpha^2 \gamma_x^2  (1+\alpha_3^{-1}) = \frac{1+\widehat\rho_x^2}{2} +  48 \alpha^2 \gamma_x^2  +  \frac{7\alpha\gamma_x}{1-\widehat\rho_x^2} + \frac{48\alpha\gamma_x(1-\widehat\rho_x^2)}{7} \\ 
 \leq&~  \textstyle  \frac{1+\widehat\rho_x^2}{2} +  \frac{48}{60^2}(1-\widehat\rho_x^2)^4 + \frac{7}{60}(1-\widehat\rho_x^2) + \frac{7}{60}(1-\widehat\rho_x^2)^3\leq \frac{1+\widehat\rho_x^2}{2} + \frac{ 1-\widehat\rho_x^2}{4} = \frac{3+\widehat\rho_x^2}{4},\\
 &~ \textstyle  \frac{2\widehat\rho_x^2}{1-\widehat\rho_x^2}(1+\alpha_3) + 48 \alpha^2 \gamma_x^2  (1+\alpha_3^{-1}) =  \frac{2\widehat\rho_x^2}{1-\widehat\rho_x^2}  + 48 \alpha^2 \gamma_x^2  +  
 \frac{2\widehat\rho_x^2}{1-\widehat\rho_x^2} \frac{7 \alpha \gamma_x }{1-\widehat\rho_x^2} +  \frac{48\alpha\gamma_x(1-\widehat\rho_x^2)}{7}\\
  \leq &~ \textstyle \frac{1}{1-\widehat\rho_x^2} \left(
  2\widehat\rho_x^2 + \frac{48}{60^2} (1-\widehat\rho_x^2) + \frac{14\widehat\rho_x^2}{60} + \frac{7}{60}(1-\widehat\rho_x^2)
  \right)  \leq  %= \red{\frac{1}{1-\widehat\rho_x^2} \left( (2 + \frac{14}{60} - \frac{48}{60^2}-  \frac{7}{60} )\widehat\rho_x^2 + \frac{48}{60^2}  + \frac{7}{60}  \right) }\\  \leq &~ 
  \frac{1}{1-\widehat\rho_x^2} \left(
  2\widehat\rho_x^2 + \frac{48}{60^2}  + \frac{7}{60}
  \right) \leq  \frac{9}{4(1-\widehat\rho_x^2)}. 
\end{align*}
Thus
\eqref{eq:2.4.1} holds. 
% \YX{how are the coefficients for $\|\X^t_\perp\|^2$ and $\|\Y^t_\perp\|^2$ bounded?}

Now let us consider the compression error of $\X$.
By \eqref{eq:alg3_6}, we have
\begin{align}
 &~\bbE\big[\|\X^{t+1}-\underline\X^{t+1}\|^2\big]% =&~ \bbE\big[\|\gamma_x \underline\X^{t+1}(\W-\I) + \X^{t+\frac{1}{2}}-\underline\X^{t+1} \|^2 \big] \\
 =  \bbE\big[\|(\underline\X^{t+1} - \X^{t+\frac{1}{2}}) \big(\gamma_x(\W-\I) -\I\big) + \gamma_x \X^{t+\frac{1}{2}} (\I-\J) (\W-\I) \|^2\big] \nonumber\\
\leq&~ (1+\alpha_4) (1+2\gamma_x)^2  \bbE\big[\|\underline\X^{t+1}-\X^{t+\frac{1}{2}}\|^2\big]  + (1+\alpha_4^{-1})4 \gamma_x^2  \bbE\big[\|\X^{t+\frac{1}{2}}_\perp\|^2\big],\label{eq:2.5.1.0}
\end{align}
where we have used $\J\W=\J$ in the equality, %\YX{$\|\cdot\|_2$ should be used instead of $\|\cdot\|$. Such a typo may appear in other places. I have corrected those that I found.} 
$\|\gamma_x (\W-\I) -\I\|_2\leq \gamma_x\|\W-\I\|_2+\|\I\|_2\leq 1+2\gamma_x$ and $\|\W-\I\|_2\leq 2$ in the inequality, and $\alpha_4$ can be any positive number. For the second term in the right hand side of \eqref{eq:2.5.1.0}, we have
\begin{align}
    \|\X^{t+\frac{1}{2}}_\perp\|^2 \overset{\eqref{eq:alg3_4}}{=}&~ \left\|\left(\prox_{\eta r} \left(\X^t - \eta \Y^{t}\right)-\prox_{\eta r} \left(\bar\vx^t - \eta \bar\vy^{t}\right)\1^\top\right)(\I-\J)\right\|^2 \nonumber \\
     \leq&~ % \|\prox_{\eta r} \left(\X^t - \eta \Y^{t}\right)-\prox_{\eta r} \left(\bar\vx^t - \eta \bar\vy^{t}\right)\1^\top\|^2 \leq
     \|\X^t_\perp- \eta \Y^{t}_\perp\|^2   
     \leq   2\|\X^t_\perp\|^2+2\eta^2\|\Y^{t}_\perp\|^2, \label{eq:2.2.1}
\end{align}
where we have used $\1^\top(\I-\J)=\0^\top$, $\|\I-\J\|_2\leq 1$, and Lemma \ref{lem:prox_diff}.
Now plugging  \eqref{eq:2.2.2} and \eqref{eq:2.2.1} into \eqref{eq:2.5.1.0} gives
\begin{align*}
    \bbE\big[\|\X^{t+1}-\underline\X^{t+1}\|^2\big]
    \leq  \left( \textstyle (1+\alpha_4^{-1})8\gamma_x^2+(1+\alpha_4)  (1+2\gamma_x)^2\frac{16}{1-\alpha^2}\right) \left( \bbE\big[\|\X^t_\perp\|^2\big] + \eta^2 \bbE\big[\|\Y^t_\perp\|^2\big]\right) \nonumber\\
      \textstyle  + (1+\alpha_4) (1+2\gamma_x)^2\frac{1+\alpha^2}{2}\bbE\big[\|\X^t-\underline\X^{t}\|^2\big]
    +(1+\alpha_4)(1+2\gamma_x)^2\frac{8}{1-\alpha^2} \left(  \bbE\big[\| \widehat\X^t - \X^t\|^2\big]  +  \eta^2\sigma^2\right).
\end{align*}
With $\alpha_4=\frac{1-\alpha^2}{12}$ and $\gamma_x\leq \frac{1-\alpha^2}{25}$, \eqref{eq:2.5.1} holds because $(1+2\gamma_x)^2  %= 1+ 4\gamma_x + 4\gamma_x^2 = 1 + (4+4\gamma_x)\gamma_x
    \leq 1 + \frac{104}{25}\gamma_x  \leq \frac{7}{6}$, $ (1+2\gamma_x)^2\frac{1+\alpha^2}{2}\leq \frac{1+\alpha^2}{2}+\frac{104}{25}\gamma_x\leq \frac{2+\alpha^2}{3}$,  and
\begin{align} 
    %(1+2\gamma_x)^2 &~ %= 1+ 4\gamma_x + 4\gamma_x^2 = 1 + (4+4\gamma_x)\gamma_x
    %\leq 1 + \frac{104}{25}\gamma_x  \leq \frac{7}{6} \nonumber\\
    % (1+2\gamma_x)^2\frac{1+\alpha^2}{2}
    % \leq  (1+\frac{104}{25}\gamma_x) \frac{1+\alpha^2}{2} \leq&~ \frac{1+\alpha^2}{2} + \frac{104}{25\cdot25}(1-\alpha^2)\leq %\frac{1+\alpha^2}{2} + \frac{1-\alpha^2}{6} =
    % \frac{2+\alpha^2}{3}\nonumber \\
    (1+\alpha_4) (1+2\gamma_x)^2\frac{1+\alpha^2}{2}  \leq &~ \frac{2+\alpha^2}{3}  + \alpha_4 = \frac{3+\alpha^2}{4}, \label{eq:gamma_x_1}\\
   (1+\alpha_4^{-1}) 8\gamma_x^2+  (1+\alpha_4) (1+2\gamma_x)^2\frac{16}{1-\alpha^2} \leq&~ \frac{13}{1-\alpha^2}\frac{8}{625} + \frac{13}{12} \frac{7}{6} \frac{16}{1-\alpha^2}  \leq \frac{21}{1-\alpha^2}, \label{eq:gamma_x_2}\\
   (1+\alpha_4)(1+2\gamma_x)^2\frac{8}{1-\alpha^2} \leq&~ \frac{13}{12} \frac{7}{6}\frac{8}{1-\alpha^2} \leq \frac{11}{1-\alpha^2}. \nonumber
\end{align}
\end{proof}

\begin{lemma} 
\label{lem:Y_consensus_comperror} Let $\eta\leq \min\{\lambda, \frac{1-\widehat\rho^2_y}{8\sqrt{5} L} \} $, $\lambda \leq\frac{1}{4 L}$, $ \gamma_x\leq \frac{2\sqrt{3}-3}{6\alpha}$, $\gamma_y\leq \min\{\frac{\sqrt{1-\widehat\rho^2_y}}{12\alpha}, \frac{1-\alpha^2}{25}\}$.
Then the consensus error and compression error of $\Y$ can be bounded by
\begin{align}
 \bbE\big[\|\Y^{t+1}_\perp\|^2\big]  \leq &~   \frac{150 L^2  }{1-\widehat\rho^2_y }   \bbE\big[\|\X^t_\perp\|^2\big] + \frac{20\sqrt{3} \alpha\gamma_x  L^2}{1-\widehat\rho^2_y  }  \bbE\big[\|\X^t-\underline\X^{t}\|^2\big]+\frac{3+\widehat\rho^2_y }{4}\bbE\big[\|\Y^t_\perp\|^2\big] \nonumber\\
&~ +\frac{48\alpha^2\gamma_y^2}{1-\widehat\rho^2_y } \bbE\big[\|\Y^{t} -\underline\Y^{t}\|^2\big]  + \frac{40 L^2  }{1-\widehat\rho^2_y  }  \bbE\big[\| \widehat\X^t - \X^t\|^2\big]  + 12n \sigma^2,  \label{eq:2.4.2} \\
\bbE\big[\|\Y^{t+1}-\underline\Y^{t+1}\|^2\big]
\leq &~ \frac{180 L^2}{1-\alpha^2}\bbE\big[\|\X^t_\perp\|^2\big] + \frac{24\sqrt{3}\alpha\gamma_x  L^2}{1-\alpha^2} \bbE\big[\|\X^t-\underline\X^{t}\|^2\big]
+ \frac{3+\alpha^2}{4}\bbE\big[\|\Y^{t} -\underline\Y^{t}\|^2\big] \nonumber\\
&~ +\frac{104\gamma_y^2+ 96\eta^2 L^2}{1-\alpha^2}\bbE\big[\|\Y^{t}(
\I-\J)\|^2\big] + \frac{48 L^2}{1-\alpha^2} \bbE\big[\| \widehat\X^t - \X^t\|^2\big] +  \frac{10 n}{1-\alpha^2} \sigma^2 .\label{eq:2.5.2}
\end{align} 
\end{lemma}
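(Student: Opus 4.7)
The plan is to follow the template of Lemma~\ref{lem:X_consensus_comperror}: decompose each quantity into a ``mixing'' piece and a ``correction'' piece, apply Young's inequality with a tunable parameter, and bound each piece using results already proved. Two identities will be used throughout: $(\W-\I)(\I-\J)=\W-\I$ and $(\widehat\W_y-\J)(\I-\J)=\widehat\W_y-\J$, both consequences of $\W\J=\J\W=\J$ under Assumption~\ref{assu:mix_matrix}.

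For the consensus bound \eqref{eq:2.4.2} I will multiply \eqref{eq:Y_hatW} by $(\I-\J)$ to obtain
\[
\Y^{t+1}_\perp = \Y^{t+\frac{1}{2}}(\widehat\W_y-\J) + \gamma_y\bigl(\underline\Y^{t+1}-\Y^{t+\frac{1}{2}}\bigr)(\W-\I),
\]
and split via Young's inequality with parameter $\beta_1>0$. The first piece is handled by expanding $\Y^{t+\frac{1}{2}}=\Y^t+\nabla\bF^{t+1}-\nabla\bF^t$ and repeating the argument of Lemma~\ref{lem:YI_J} with $\W$ replaced by $\widehat\W_y$, producing contributions in $\|\Y^t_\perp\|^2$, $n\sigma^2$ and $L^2\|\X^{t+1}-\X^t\|^2$. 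For the second piece, after using $\|\W-\I\|_2\le 2$, I will apply \eqref{eq:2.3.2_1} from Lemma~\ref{lem:prepare_comp_y}, producing $\alpha^2\gamma_y^2$-weighted copies of $\|\Y^t-\underline\Y^t\|^2$, $n\sigma^2$ and $L^2\|\X^{t+1}-\X^t\|^2$. I will then substitute \eqref{eq:2.2.3} for $\bbE[\|\X^{t+1}-\X^t\|^2]$ and choose $\beta_1\asymp 1-\widehat\rho_y^2$ so that the coefficient of $\|\Y^t_\perp\|^2$ collapses to exactly $\frac{3+\widehat\rho_y^2}{4}$.

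For the compression bound \eqref{eq:2.5.2} I will use \eqref{eq:alg3_3_matrix} together with the identity $\Y^{t+\frac{1}{2}}(\W-\I)=\Y^{t+\frac{1}{2}}_\perp(\W-\I)$ to rewrite
\[
\Y^{t+1}-\underline\Y^{t+1} = (\Y^{t+\frac{1}{2}}-\underline\Y^{t+1})\bigl(\I-\gamma_y(\W-\I)\bigr) + \gamma_y\Y^{t+\frac{1}{2}}_\perp(\W-\I).
\]
A Young splitting with parameter $\beta_2$, together with $\|\I-\gamma_y(\W-\I)\|_2\le 1+2\gamma_y$ and $\|\W-\I\|_2\le 2$, yields
\[
\bbE\bigl[\|\Y^{t+1}-\underline\Y^{t+1}\|^2\bigr]\le (1+\beta_2)(1+2\gamma_y)^2\bbE\bigl[\|\Y^{t+\frac{1}{2}}-\underline\Y^{t+1}\|^2\bigr]+4(1+\beta_2^{-1})\gamma_y^2\bbE\bigl[\|\Y^{t+\frac{1}{2}}_\perp\|^2\bigr].
\]
On the first factor I will apply the \emph{tighter} bound \eqref{eq:2.3.2} (not \eqref{eq:2.3.2_1}) to extract the desired $\frac{1+\alpha^2}{2}$ contraction, and on the second I will use $\|\Y^{t+\frac{1}{2}}_\perp\|^2\le 2\|\Y^t_\perp\|^2+2\|(\nabla\bF^{t+1}-\nabla\bF^t)(\I-\J)\|^2$ together with \eqref{eq:y_cons12}; then I will eliminate $\bbE[\|\X^{t+1}-\X^t\|^2]$ via \eqref{eq:2.2.3}. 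Taking $\beta_2=\frac{1-\alpha^2}{12}$, exactly as $\alpha_4$ in Lemma~\ref{lem:X_consensus_comperror}, collapses $(1+\beta_2)(1+2\gamma_y)^2\frac{1+\alpha^2}{2}$ to $\frac{3+\alpha^2}{4}$ under the stated condition $\gamma_y\le\frac{1-\alpha^2}{25}$, reproducing the inequality \eqref{eq:gamma_x_1}.

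The hard part will be the constant bookkeeping. After splitting and plugging in \eqref{eq:2.2.3}, the coefficient of $\|\Y^t_\perp\|^2$ in \eqref{eq:2.4.2} decomposes as $(1+\beta_1)\widehat\rho_y^2 + O(\eta^2 L^2/(1-\widehat\rho_y^2)) + O(\alpha^2\gamma_y^2)$, and all three pieces must be collectively absorbed into $\frac{3+\widehat\rho_y^2}{4}$. The step-size condition $\eta\le\frac{1-\widehat\rho_y^2}{8\sqrt{5}L}$ is precisely what is needed to control the $\eta^2L^2$ contribution from \eqref{eq:2.2.3}, while $\gamma_y\le\frac{\sqrt{1-\widehat\rho_y^2}}{12\alpha}$ controls the $\alpha^2\gamma_y^2$ contribution. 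Once both leading contractions $\frac{3+\widehat\rho_y^2}{4}$ and $\frac{3+\alpha^2}{4}$ are secured, the remaining coefficients (the $\frac{150L^2}{1-\widehat\rho_y^2}$, $\frac{48\alpha^2\gamma_y^2}{1-\widehat\rho_y^2}$, etc.\ in the statement) will follow by direct arithmetic analogous to \eqref{eq:gamma_x_1}--\eqref{eq:gamma_x_2}.
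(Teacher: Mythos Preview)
Your proposal is correct and matches the paper's proof essentially step for step: the same Young-splittings of \eqref{eq:Y_hatW} and \eqref{eq:alg3_3_matrix}, the same use of \eqref{eq:2.3.2_1} versus \eqref{eq:2.3.2} on the two compression pieces, the same reuse of the Lemma~\ref{lem:YI_J} argument with $\widehat\W_y$ in place of $\W$, the same substitution of \eqref{eq:2.2.3}, and the same parameter choice $\beta_2=\frac{1-\alpha^2}{12}$ (the paper calls it $\alpha_6$). One small slip in your bookkeeping description: the leading contribution to the $\|\Y^t_\perp\|^2$ coefficient is $(1+\beta_1)\tfrac{1+\widehat\rho_y^2}{2}$ (from \eqref{eq:comp_y_cons220}), not $(1+\beta_1)\widehat\rho_y^2$; the paper takes $\beta_1=\alpha_5=\frac{1-\widehat\rho_y^2}{3(1+\widehat\rho_y^2)}$ so that this piece equals $\frac{2+\widehat\rho_y^2}{3}$, and the residual $O(\eta^2L^2/(1-\widehat\rho_y^2))$ from \eqref{eq:2.2.3} is then absorbed via the step-size condition.
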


\begin{proof}
First, let us consider the consensus of $\Y$. Similar to \eqref{eq:XComp_consensus0}, we have from the update \eqref{eq:Y_hatW} that
\begin{align}
 \bbE\big[\|\Y^{t+1}_\perp\|^2\big] \leq  (1+\alpha_5)\bbE\big[\|\Y^{t+\frac{1}{2}}(\widehat\W_y-\J)\|^2\big] + (1+\alpha_5^{-1})4\gamma_y^2  \bbE\big[\|\underline\Y^{t+1}-\Y^{t+\frac{1}{2}}\|^2\big], \label{eq:Ycomp_conses0}
\end{align}
where $\alpha_5$ can be any positive number.
Similarly as  \eqref{eq:y_cons1}-\eqref{eq:y_cons2} in the proof of Lemma \ref{lem:YI_J}, we have the bound for the first term on the right hand side of \eqref{eq:Ycomp_conses0} by replacing  $\W$ with $\widehat\W_y$, namely,  
\begin{align}
    \bbE\big[\|\Y^{t+\frac{1}{2}}(\widehat\W_y-\J)\|^2\big] \leq  \textstyle \frac{1+\widehat\rho^2_y}{2} \bbE\big[\|\Y^{t}_\perp \|^2\big]   + \frac{2 \widehat\rho^2_y L^2 }{1-\widehat\rho^2_y } \bbE\big[\| \X^{t+1} - \X^{t}\|^2\big] +  5 \widehat\rho^2_y  n \sigma^2.\label{eq:comp_y_cons220}  
\end{align} 
Plug \eqref{eq:comp_y_cons220} and  \eqref{eq:2.3.2_1} back to \eqref{eq:Ycomp_conses0}, and take $\alpha_5 = \frac{1-\widehat\rho^2_y}{3(1+\widehat\rho^2_y)}$. We have 
\begin{align*} 
   &~ \bbE\big[\|\Y^{t+1}_\perp\|^2\big] 
   \leq  \textstyle \frac{2(2+\widehat\rho^2_y)}{3(1+\widehat\rho^2_y)}\frac{1+\widehat\rho^2_y}{2} \bbE\big[\|\Y^{t}_\perp \|^2\big] 
   + \frac{24\gamma_y^2}{1-\widehat\rho^2_y}  2\alpha^2 \bbE\big[\|\Y^{t} -\underline\Y^{t}\|^2\big]  \nonumber\\
   &~\quad  \textstyle + \frac{24\gamma_y^2}{1-\widehat\rho^2_y}  6\alpha^2 n\sigma^2  + 2\cdot5 \widehat\rho^2_y  n \sigma^2 + \left(  \textstyle \frac{24\gamma_y^2}{1-\widehat\rho^2_y}  4\alpha^2 L^2 + 2\cdot\frac{2 \widehat\rho^2_y L^2 }{1-\widehat\rho^2_y } \right)\bbE\big[\| \X^{t+1} - \X^{t}\|^2\big] \nonumber\\
  \leq &~  \textstyle \frac{2+\widehat\rho^2_y}{3} \bbE\big[\|\Y^{t}_\perp \|^2\big] 
   +  \frac{48\alpha^2\gamma_y^2}{1-\widehat\rho^2_y}  \bbE\big[\|\Y^{t} -\underline\Y^{t}\|^2\big]  +  11  n \sigma^2  +  \frac{5  L^2}{1-\widehat\rho^2_y} \bbE\big[\| \X^{t+1} - \X^{t}\|^2\big] \\
    \leq &~  \textstyle \frac{150 L^2  }{1-\widehat\rho^2_y }   \bbE\big[\|\X^t_\perp\|^2\big] + \frac{20\sqrt{3} L^2}{1-\widehat\rho^2_y }  \alpha \gamma_x \bbE[\|\X^t-\underline\X^{t}\|^2] +  \frac{40 L^2  }{1-\widehat\rho^2_y }  \eta^2\sigma^2  + 11 n \sigma^2  \nonumber\\
&~ \textstyle +\left( \textstyle \frac{2+\widehat\rho^2_y}{3}+ \frac{80 L^2  }{1-\widehat\rho^2_y }  \eta^2\right) \bbE\big[\|\Y^t_\perp\|^2\big]  +\frac{48\alpha^2\gamma_y^2}{1-\widehat\rho^2_y} \bbE\big[\|\Y^{t} -\underline\Y^{t}\|^2\big]  + \frac{40 L^2  }{1-\widehat\rho^2_y}\bbE\big[\| \widehat\X^t - \X^t\|^2\big],
\end{align*}
where the first inequality holds by  $1+\alpha_5 = \frac{2(2+\widehat\rho^2_y)}{3(1+\widehat\rho^2_y)} \leq 2$ and $1+\alpha_5^{-1} = \frac{2(2+\widehat\rho^2_y)}{1-\widehat\rho^2_y}\leq \frac{6}{1-\widehat\rho^2_y}$, 
the second inequality holds by $\gamma_y\leq \frac{\sqrt{1-\widehat\rho^2_y}}{12\alpha}$ and $\alpha^2\leq 1$, and the third equality holds by \eqref{eq:2.2.3}.
By $\frac{80 L^2  }{1-\widehat\rho^2_y}  \eta^2 \leq \frac{1-\widehat\rho^2_y}{4}$ and  $ \frac{40 L^2  }{1-\widehat\rho^2_y} \eta^2\leq \frac{1-\widehat\rho^2_y}{8}\leq 1$ from $\eta\leq  \frac{1-\widehat\rho^2_y}{8\sqrt{5} L} $, we can now obtain \eqref{eq:2.4.2}.

Next let us consider the compression error of $\Y$, similar to \eqref{eq:2.5.1.0}, we have by \eqref{eq:alg3_3} that
\begin{align} 
&~\bbE\big[\|\Y^{t+1}-\underline\Y^{t+1}\|^2\big] 
\leq (1+\alpha_6)(1+2\gamma_y)^2  \bbE\big[\|\underline\Y^{t+1}-\Y^{t+\frac{1}{2}}\|^2\big] +  (1+\alpha_6^{-1})4 \gamma_y^2   \bbE\big[\|\Y^{t+\frac{1}{2}}_\perp\|^2\big], \label{eq:Y_compress_0}
\end{align}
where %we used $\| \gamma_y (\W-\I) -\I \|\leq \gamma_y\|\W-\I\|+\|\I\|\leq 1+2\gamma_y$ and $\|\W-\I\|\leq 2$ in the second inequality.
$\alpha_6$ is any positive number. 
%The bound of $\bbE\big[\|\underline\Y^{t+1}-\Y^{t+\frac{1}{2}}\|^2\big]$ is given in  \eqref{eq:2.3.2}. 
For $\bbE\big[\|\Y^{t+\frac{1}{2}}_\perp\|^2\big]$, we have from \eqref{eq:alg3_1} that
\begin{align}
     &~\bbE\big[\|\Y^{t+\frac{1}{2}}_\perp\|^2\big]  =\bbE\big[\|( \Y^{t} + \nabla \bF^{t+1} - \nabla \bF^{t})(\I-\J)\|^2\big]\nonumber \\
     \leq &~ 2\bbE\big[\|\Y^{t}_\perp\|^2\big] +2\bbE\big[\|\nabla \bF^{t+1}-\nabla \bF^{t}\|^2\big] \leq 
     2\bbE\big[\|\Y^{t}_\perp\|^2\big] +6 n \sigma^2 + 4 L^2 \bbE\big[\|\X^{t+1}-\X^{t}\|^2\big], \label{eq:2.3.1}
\end{align} 
where we have used \eqref{eq:y_cons12}.
Plug \eqref{eq:2.3.2} and \eqref{eq:2.3.1} back to \eqref{eq:Y_compress_0} to have
\begin{align*} 
&~\bbE\big[\|\Y^{t+1}-\underline\Y^{t+1}\|^2\big] \leq \textstyle  (1+\alpha_6) (1+2\gamma_y)^2 \frac{1+\alpha^2}{2}\bbE\big[\|\Y^{t} -\underline\Y^{t}\|^2\big] +(1+\alpha_6^{-1})8\gamma_y^2\bbE\big[\|\Y^{t}(
\I-\J)\|^2\big] \\
&~+ \left( \textstyle (1+\alpha_6^{-1})4\gamma_y^2 +(1+\alpha_6)(1+2\gamma_y)^2 \frac{1}{1-\alpha^2} \right)4 L^2 \bbE\big[\|\X^{t+1}-\X^{t}\|^2\big] \\
&~ +  \left( \textstyle  (1+\alpha_6^{-1})4\gamma_y^2 +(1+\alpha_6)(1+2\gamma_y)^2 \frac{1}{1-\alpha^2} \right) 6 n \sigma^2.
\end{align*} 
%\YX{Let's not use $*$ for product. We can use $\cdot$ or $\times$.}
With $\alpha_6=\frac{1-\alpha^2}{12}$ and $\gamma_y< \frac{1-\alpha^2}{25}$,
like \eqref{eq:gamma_x_1} and \eqref{eq:gamma_x_2}, we have $(1+\alpha_6) (1+2\gamma_y)^2 \frac{1+\alpha^2}{2}\leq \frac{3+\alpha^2}{4}$, $8(1+\alpha_6^{-1})\leq\frac{8\cdot13}{1-\alpha^2} = \frac{104}{1-\alpha^2}  $ and  $ (1+\alpha_6^{-1})4\gamma_y^2 +(1+\alpha_6)(1+2\gamma_y)^2 \frac{1}{1-\alpha^2} \leq \frac{13}{1-\alpha^2}\frac{4}{625}+\frac{13}{12}\frac{7}{6}\frac{1}{1-\alpha^2}\leq \frac{3}{2(1-\alpha^2)}$. Thus
\begin{align*} 
\bbE\big[\|\Y^{t+1}-\underline\Y^{t+1}\|^2\big]
\leq &~  \textstyle \frac{3+\alpha^2}{4}\bbE\big[\|\Y^{t} -\underline\Y^{t}\|^2\big] 
 +\frac{104\gamma_y^2}{1-\alpha^2}\bbE\big[\|\Y^{t}(
\I-\J)\|^2\big]+\frac{6 L^2}{1-\alpha^2} \bbE\big[\|\X^{t+1}-\X^{t}\|^2\big] +  \frac{9n \sigma^2}{1-\alpha^2} \nonumber\\
\leq &~  \textstyle\frac{180 L^2}{1-\alpha^2}\bbE\big[\|\X^t_\perp\|^2\big] + \frac{24\sqrt{3} \alpha\gamma_x  L^2 }{1-\alpha^2} \bbE\big[\|\X^t-\underline\X^{t}\|^2\big] 
+ \frac{3+\alpha^2}{4}\bbE\big[\|\Y^{t} -\underline\Y^{t}\|^2\big] \\
&~ \textstyle +\frac{104\gamma_y^2+ 96\eta^2 L^2}{1-\alpha^2}\bbE\big[\|\Y^{t}(
\I-\J)\|^2\big] + \frac{48 L^2}{1-\alpha^2} \bbE\big[\| \widehat\X^t - \X^t\|^2\big] +  \frac{48 L^2\eta^2+9n}{1-\alpha^2} \sigma^2,
\end{align*}
where the second inequality holds by \eqref{eq:2.2.3}.
By $48 L^2\eta^2\leq n$, we have \eqref{eq:2.5.2} and complete the proof.
\end{proof}
 
\begin{lemma}\label{lem:phi_one_step}
Let $\eta\leq  \lambda \leq\frac{1}{4 L}$ and $   \gamma_x\leq  \frac{1}{6\alpha}$. 
It holds %For the updates of $\X^t$, we have 
\begin{align}
 \sum_{i=1}^n \bbE\big[\phi_\lambda(\vx_i^{t+1})\big]  
\leq&~ \sum_{i=1}^n\bbE\big[ \phi_\lambda( \vx_i^{t})\big] +   \frac{12}{\lambda}\bbE\big[\|\X^t_\perp\|^2\big] + \frac{7\alpha\gamma_x}{\lambda} \bbE\big[\|\X^t-\underline\X^{t}\|^2\big]  +   \frac{12}{\lambda}  \eta^2\bbE\big[\|\Y^t_\perp\|^2\big] \nonumber \\
&~+\frac{1}{\lambda}\left( -\frac{\eta}{4\lambda} + 23\alpha\gamma_x \right) \bbE\big[\|  \widehat\X^{t}- \X^{t} \|^2\big] + \frac{5}{\lambda} \eta^2 \sigma^2. \label{eq:2.7}
\end{align} 
\end{lemma}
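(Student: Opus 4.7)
The strategy is to follow the two-stage argument of Lemma~\ref{lem:weak_convex}---first passing from $\X^t$ to $\X^{t+\frac{1}{2}}$ through the proximal update \eqref{eq:alg3_4}, and then from $\X^{t+\frac{1}{2}}$ to $\X^{t+1}$ through the compressed mixing \eqref{eq:compX_hatW}---while carefully tracking the extra compression residual that appears only in the second stage. The first stage is essentially unchanged from the non-compressed proof: taking $\widehat\vx_i^t$ as the reference point inside the Moreau envelope of $\vx_i^{t+\frac{1}{2}}$ and applying \eqref{eq:hatx_xprox_comp} yields
\[
\textstyle \sum_{i=1}^n\phi_\lambda(\vx_i^{t+\frac{1}{2}}) \leq \sum_{i=1}^n\phi_\lambda(\vx_i^t) + \frac{2}{\lambda}\bbE[\|\X^t_\perp\|^2] - \frac{\eta}{4\lambda^2}\bbE[\|\widehat\X^t-\X^t\|^2] + \frac{2\eta^2}{\lambda}\bbE[\|\Y^t_\perp\|^2] + \frac{\eta^2}{\lambda}\sigma^2,
\]
exactly as in the derivation of \eqref{eq:phi_update}.

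For the second stage, I rewrite \eqref{eq:compX_hatW} as $\vx_i^{t+1} = \sum_j (\widehat\W_x)_{ji} \vx_j^{t+\frac{1}{2}} + \ve_i$ with compression residual $\ve_i := \gamma_x \sum_j (\W-\I)_{ji}(\underline\vx_j^{t+1} - \vx_j^{t+\frac{1}{2}})$, and introduce $\vh_i := \sum_j(\widehat\W_x)_{ji}\widehat\vx_j^{t+\frac{1}{2}}$ inside the Moreau inequality $\phi_\lambda(\vx_i^{t+1}) \leq \phi(\vh_i) + \frac{1}{2\lambda}\|\vh_i-\vx_i^{t+1}\|^2$. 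Because $\widehat\W_x = \gamma_x\W + (1-\gamma_x)\I$ is doubly stochastic, Lemma~\ref{lem:weak_convx} bounds $\phi(\vh_i)$ by $\sum_j(\widehat\W_x)_{ji}\phi(\widehat\vx_j^{t+\frac{1}{2}})$ plus a pairwise $L$-term that, after Lemma~\ref{lem:prox_diff} and the derivation behind \eqref{eq:2_3} (which only uses double stochasticity, so carries over verbatim to $\widehat\W_x$), reduces to a multiple of $\|\X^t_\perp\|^2 + \eta^2\|\Y^t_\perp\|^2$. Young's inequality
\[
\textstyle \|\vh_i-\vx_i^{t+1}\|^2 \leq (1+a)\|\sum_j (\widehat\W_x)_{ji}(\widehat\vx_j^{t+\frac{1}{2}} - \vx_j^{t+\frac{1}{2}})\|^2 + (1+a^{-1})\|\ve_i\|^2,
\]
Jensen on the first term, and the identity $\phi(\widehat\vx_j^{t+\frac{1}{2}}) = \phi_\lambda(\vx_j^{t+\frac{1}{2}}) - \frac{1}{2\lambda}\|\widehat\vx_j^{t+\frac{1}{2}} - \vx_j^{t+\frac{1}{2}}\|^2$ combine to leave $\sum_j\phi_\lambda(\vx_j^{t+\frac{1}{2}})$ plus $\frac{a}{2\lambda}\|\widehat\X^{t+\frac{1}{2}} - \X^{t+\frac{1}{2}}\|^2$ and $\frac{1+a^{-1}}{2\lambda}\sum_i\|\ve_i\|^2$. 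The residual sum $\sum_i\|\ve_i\|^2 \leq 4\gamma_x^2\|\underline\X^{t+1} - \X^{t+\frac{1}{2}}\|^2$ is then bounded by chaining \eqref{eq:X_-X_1} with \eqref{eq:hatx_xprox_comp}, expressing everything in terms of the five quantities on the right of \eqref{eq:2.7}.

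The main obstacle is the constant bookkeeping in the Young's split: to turn the $(1+a^{-1})\alpha^2\gamma_x^2$ prefactor that emerges from \eqref{eq:X_-X_1} into the \emph{linear}-in-$\alpha\gamma_x$ coefficient $7\alpha\gamma_x/\lambda$ required by \eqref{eq:2.7}, the parameter $a$ must be chosen proportional to $\alpha\gamma_x$; this simultaneously inflates the $(1+a)$ factor and forces $\|\widehat\X^{t+\frac{1}{2}} - \X^{t+\frac{1}{2}}\|^2$ to appear on the right-hand side, for which no earlier lemma gives a direct bound. I would absorb it via the triangle inequality $\|\widehat\X^{t+\frac{1}{2}} - \X^{t+\frac{1}{2}}\| \leq \|\widehat\X^{t+\frac{1}{2}} - \widehat\X^t\| + \|\widehat\X^t - \X^{t+\frac{1}{2}}\|$, using the non-expansiveness $\|\widehat\X^{t+\frac{1}{2}} - \widehat\X^t\| \leq \frac{1}{1-\lambda L}\|\X^{t+\frac{1}{2}} - \X^t\|$ from Lemma~\ref{lem:prox_diff} (bounded by $2$ since $\lambda L\leq 1/4$) combined with a direct estimate on $\|\X^t-\X^{t+\frac{1}{2}}\|^2$ coming from the proximal step, and finally invoking \eqref{eq:hatx_xprox_comp} for the remaining $\|\widehat\X^t - \X^{t+\frac{1}{2}}\|^2$. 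The hypothesis $\gamma_x \leq \frac{1}{6\alpha}$ together with $\eta\leq\lambda$ then ensures that all $\frac{a}{2\lambda}$-contributions are dominated and the coefficients of $\|\X^t_\perp\|^2$, $\|\Y^t_\perp\|^2$, $\|\widehat\X^t-\X^t\|^2$, $\|\X^t-\underline\X^t\|^2$ and $\sigma^2$ come out as declared in \eqref{eq:2.7}.
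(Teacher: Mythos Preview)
Your proposal is correct and follows essentially the same approach as the paper: both arguments introduce $\sum_j(\widehat\W_x)_{ji}\widehat\vx_j^{t+\frac{1}{2}}$ inside the Moreau envelope of $\vx_i^{t+1}$, apply Young's inequality with parameter $a=\alpha\gamma_x$ to isolate the compression residual, bound the latter via \eqref{eq:X_-X_1}, control the resulting $\|\widehat\X^{t+\frac{1}{2}}-\X^{t+\frac{1}{2}}\|^2$ by routing through $\widehat\X^t$ (the paper does this in \eqref{eq:X_-X2}), and close with \eqref{eq:hatx_xprox_comp}. Your two-stage organization and the paper's all-at-once derivation are equivalent; the one imprecise phrase---``a direct estimate on $\|\X^t-\X^{t+\frac{1}{2}}\|^2$ coming from the proximal step''---is simply the triangle inequality $\|\X^{t+\frac{1}{2}}-\X^t\|^2\le 2\|\X^{t+\frac{1}{2}}-\widehat\X^t\|^2+2\|\widehat\X^t-\X^t\|^2$, after which \eqref{eq:hatx_xprox_comp} applies and all constants land exactly as in \eqref{eq:2.7}.
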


\begin{proof}
Similar to \eqref{eq:phi_update_0}, we have 
% \YX{Either use textstyle or left-right (or big or Big) pair of parentheses or brackets throughout the paper.}
\begin{align}
&~ \bbE\big[\phi_\lambda(\vx_i^{t+1})\big] \overset{\eqref{eq:x_t_hat}}{=}  \bbE\big[\phi(\widehat\vx_i^{t+1})\big]+\frac{1}{2\lambda}  \bbE\big[\|\widehat\vx_i^{t+1}-\vx_i^{t+1}\|^2\big] \nonumber \\
\overset{ \eqref{eq:compX_hatW}}{\leq} &~  \bbE\bigg[\phi\bigg(\sum_{j=1}^n \big(\widehat\W_x\big)_{ji}\widehat\vx_j^{t+\frac{1}{2}}\bigg)\bigg] +\frac{1}{2\lambda} \bbE\bigg[\bigg\| \sum_{j=1}^n \big(\widehat\W_x\big)_{ji} \big(\widehat\vx_j^{t+\frac{1}{2}}- \vx_j^{t+\frac{1}{2}}\big) - \gamma_x\sum_{j=1}^n \big(\W_{ji}-\I_{ji}\big)\big(\underline\vx_j^{t+1}-\vx_j^{t+\frac{1}{2}}\big)  \bigg\|^2\bigg]  \nonumber\\
\leq&~  \bbE\bigg[\phi\bigg(\sum_{j=1}^n \big(\widehat\W_x\big)_{ji}\widehat\vx_j^{t+\frac{1}{2}}\bigg)\bigg]  + \frac{1+\alpha_7}{2\lambda} \bbE\bigg[\bigg\| \sum_{j=1}^n \big(\widehat\W_x\big)_{ji} \big(\widehat\vx_j^{t+\frac{1}{2}}-\vx_j^{t+\frac{1}{2}}\big)\bigg\|^2\bigg]\nonumber\\
&~ + \frac{1+\alpha_7^{-1}}{2\lambda} \bbE\bigg[\bigg\| \gamma_x  \sum_{j=1}^n\big(\W_{ji}-\I_{ji}\big)\big(\underline\vx_j^{t+1}-\vx_j^{t+\frac{1}{2}}\big)\bigg\|^2\bigg] \nonumber \\
\overset{\mbox{Lemma \ref{lem:weak_convx}}}\leq &~ \sum_{j=1}^n \big(\widehat\W_x\big)_{ji} \bbE\big[\phi( \widehat\vx_j^{t+\frac{1}{2}})\big] + \frac{ L}{2} \sum_{j=1}^{n-1}\sum_{l=j+1}^n  \big(\widehat\W_x\big)_{ji} (\widehat\W_x)_{li}\bbE\big[\|\widehat\vx_j^{t+\frac{1}{2}}-\widehat\vx_l^{t+\frac{1}{2}}\|^2\big] \nonumber \\
&~ + \frac{1+\alpha_7}{2\lambda} \sum_{j=1}^n \big(\widehat\W_x\big)_{ji}\bbE\big[\|  \widehat\vx_j^{t+\frac{1}{2}}-\vx_j^{t+\frac{1}{2}}\|^2\big] + \frac{1+\alpha_7^{-1}}{2\lambda}\gamma_x^2 \bbE\big[\|  \sum_{j=1}^n(\W_{ji}-\I_{ji})(\underline\vx_j^{t+1}-\vx_j^{t+\frac{1}{2}})\|^2\big] \nonumber \\
\leq &~  \sum_{j=1}^n \big(\widehat\W_x\big)_{ji} \bbE\big[\phi_\lambda(\vx_j^{t+\frac{1}{2}})\big] + \frac{1}{4\lambda} \sum_{j=1}^{n-1}\sum_{l=j+1}^n  \big(\widehat\W_x\big)_{ji} (\widehat\W_x)_{li} \bbE\big[\|\vx_j^{t+\frac{1}{2}}-\vx_l^{t+\frac{1}{2}}\|^2\big]  \nonumber \\
&~+ \frac{\alpha_7}{2\lambda} \sum_{j=1}^n \big(\widehat\W_x\big)_{ji}\bbE\big[\|  \widehat\vx_j^{t+\frac{1}{2}}-\vx_j^{t+\frac{1}{2}}\|^2\big] + \frac{1+\alpha_7^{-1}}{2\lambda}\gamma_x^2 \bbE\big[\|  \sum_{j=1}^n(\W_{ji}-\I_{ji})(\underline\vx_j^{t+1}-\vx_j^{t+\frac{1}{2}})\|^2\big]. \label{eq:phi_lambda1}
\end{align}
The same as \eqref{eq:phi_lambda} and \eqref{eq:2_3}, for the first two terms in the right hand side of \eqref{eq:phi_lambda1},  we have 
\begin{align}
  \sum_{i=1}^n \sum_{j=1}^n \big(\widehat\W_x\big)_{ji}  \phi_\lambda(\vx_j^{t+\frac{1}{2}}) \leq   \sum_{i=1}^n \phi_\lambda( \vx_i^{t})  +\frac{1}{2\lambda} \|\widehat\X^{t}-\X^{t+\frac{1}{2}}\|^2  -  \frac{1}{2\lambda}  \|\widehat\X^t - \X^t\|^2,\label{eq:2_2_press}\\
  \sum_{i=1}^n\sum_{j=1}^{n-1}\sum_{l=j+1}^n \big(\widehat\W_x\big)_{ji}(\widehat\W_x)_{li}\|\vx_j^{t+\frac{1}{2}}-\vx_l^{t+\frac{1}{2}}\|^2  
\leq  8   \|\X^{t}_\perp\|^2+ 8\eta^2  \|\Y^{t}_\perp\|^2. \label{eq:2_3_press}
\end{align}
For the last two terms on the right hand side of \eqref{eq:phi_lambda1}, we have
\begin{align}
    &~ \sum_{i=1}^n\sum_{j=1}^n \big(\widehat\W_x\big)_{ji}\bbE\big[\|  \widehat\vx_j^{t+\frac{1}{2}}-\vx_j^{t+\frac{1}{2}}\|^2\big] 
    = \| \widehat\X^{t+\frac{1}{2}}-\X^{t+\frac{1}{2}} \|^2 
     \leq 2 \| \widehat\X^{t+\frac{1}{2}}-\widehat\X^{t} \|^2 +2 \| \widehat\X^{t} - \X^{t+\frac{1}{2}} \|^2 \nonumber \\
     \leq &~ \textstyle \frac{2}{(1-\lambda L)^2}  \|  \X^{t+\frac{1}{2}}
     %- \widehat\X^{t}+ \widehat\X^{t}
     - \X^{t} \|^2 +2 \| \widehat\X^{t} - \X^{t+\frac{1}{2}} \|^2 
     %\leq  4  \|  \X^{t+\frac{1}{2}}- \X^{t} \|^2+2 \| \widehat\X^{t} - \X^{t+\frac{1}{2}} \|^2 \nonumber \\
     \leq  10 \|  \X^{t+\frac{1}{2}}- \widehat\X^{t} \|^2+ 8 \|  \widehat\X^{t}- \X^{t} \|^2, \label{eq:X_-X2}\\
      &~ \sum_{i=1}^n \bbE\big[\|  \sum_{j=1}^n(\W_{ji}-\I_{ji})(\underline\vx_j^{t+1}-\vx_j^{t+\frac{1}{2}})\|^2\big] = \bbE\big[\|(\underline\X^{t+1}-\X^{t+\frac{1}{2}})(\W-\I)\|^2\big]\leq
    4\bbE\big[\|\underline\X^{t+1}-\X^{t+\frac{1}{2}}\|^2\big]\nonumber \\
    \leq &~ 12\alpha^2 \left(\bbE\big[ \|\X^t-\underline\X^{t}\|^2\big] +  \bbE\big[\|\X^{t+\frac{1}{2}}-\widehat{\X}^t\|^2\big]+ \bbE\big[\|\widehat{\X}^t - \X^t\|^2\big]\right), \label{eq:X_-X1}
\end{align}
where   \eqref{eq:X_-X2} holds by Lemma \ref{lem:prox_diff} and $\frac{1}{(1-\lambda L)^2}\leq 2$, and \eqref{eq:X_-X1} holds by \eqref{eq:X_-X_1}.

Sum up \eqref{eq:phi_lambda1}  for $t=0,1,\ldots,T-1$ and take $\alpha_7 =\alpha\gamma_x$.
Then with \eqref{eq:2_2_press}, \eqref{eq:2_3_press},  \eqref{eq:X_-X2}  and \eqref{eq:X_-X1}, we have
\begin{align*}
 \sum_{i=1}^n \bbE\big[\phi_\lambda(\vx_i^{t+1})\big]
\leq  &  ~\sum_{i=1}^n \bbE\big[\phi_\lambda( \vx_i^{t}) \big]  + \frac{2}{\lambda}\left( \bbE\big[\|\X^{t}_\perp\|^2\big] + \eta^2  \bbE\big[\|\Y^{t}_\perp\|^2\big]\right)  +\textstyle \frac{6\alpha\gamma_x+6\alpha^2\gamma_x^2}{\lambda} \bbE\big[\|\X^t-\underline\X^{t}\|^2\big] \nonumber \\
&~ +  \frac{1}{\lambda}\left( \textstyle  \frac{1}{2}+11\alpha\gamma_x +6\alpha^2\gamma_x^2\right) \bbE\big[\|  \X^{t+\frac{1}{2}}- \widehat\X^{t} \|^2\big]+ \frac{1}{\lambda}\left( \textstyle  -\frac{1}{2}+10\alpha\gamma_x +6\alpha^2\gamma_x^2\right) \bbE\big[\|  \widehat\X^{t}- \X^{t} \|^2\big]\\
\leq  & ~ \sum_{i=1}^n\bbE\big[ \phi_\lambda( \vx_i^{t})\big]+ \frac{2}{\lambda}\left( \bbE\big[\|\X^{t}_\perp\|^2\big] +  \eta^2  \bbE\big[\|\Y^{t}_\perp\|^2\big]\right) + \frac{7\alpha\gamma_x}{\lambda}  \bbE\big[\|\X^t-\underline\X^{t}\|^2\big]  \nonumber \\
&~\quad  +\frac{1}{\lambda}\left(  \textstyle \frac{1}{2}+12\alpha\gamma_x\right)\bbE\big[ \|\widehat\X^{t}-\X^{t+\frac{1}{2}}\|^2\big]   +\frac{1}{\lambda} \left( \textstyle  -\frac{1}{2}+11\alpha\gamma_x\right) \bbE\big[\|  \widehat\X^{t}- \X^{t} \|^2\big].
\nonumber \\
\leq &~ \sum_{i=1}^n\bbE\big[ \phi_\lambda( \vx_i^{t})\big] +  \frac{12}{\lambda}\bbE\big[\|\X^t_\perp\|^2\big] + \frac{7\alpha\gamma_x}{\lambda} \bbE\big[\|\X^t-\underline\X^{t}\|^2\big]  +   \frac{12}{\lambda}  \eta^2\bbE\big[\|\Y^t_\perp\|^2\big] \nonumber \\
&~+  \frac{1}{\lambda}\Big( {\textstyle\left(\frac{1}{2}+12\alpha\gamma_x \right)  \left( 1-\frac{\eta}{2\lambda} \right) + \left( -\frac{1}{2}+11\alpha\gamma_x\right) }\Big) \bbE\big[\| \widehat\X^{t}- \X^{t} \|^2\big] + \frac{5}{\lambda} \eta^2 \sigma^2,
\end{align*}
where the second inequality holds by $6\alpha\gamma_x\leq 1$, and the third inequality holds by \eqref{eq:hatx_xprox_comp} with $\frac{1}{2}+12\alpha\gamma_x\leq \frac{5}{2}$.
Noticing $$\left(\frac{1}{2}+12\alpha\gamma_x \right)  \left( 1-\frac{\eta}{2\lambda} \right) + \left( -\frac{1}{2}+11\alpha\gamma_x\right) = 23\alpha\gamma_x - \frac{\eta}{4\lambda} - \frac{6\alpha\gamma_x\eta}{\lambda}\leq 23\alpha\gamma_x - \frac{\eta}{4\lambda},$$ 
we obtain \eqref{eq:2.7} and complete the proof.
\end{proof}

With Lemmas \ref{lem:X_consensus_comperror}, \ref{lem:Y_consensus_comperror} and \ref{lem:phi_one_step}, we are ready to prove the Theorem \ref{thm:sect3thm}. We will use the Lyapunov function:
\begin{align*}
    \V^t = z_1 \bbE\big[\|\X^{t}_\perp\|^2\big] + z_2 \bbE\big[\|\X^{t}-\underline\X^{t}\|^2\big] +z_3\bbE\big[\|\Y^{t}_\perp\|^2\big]+z_4 \bbE\big[\|\Y^{t}-\underline\Y^{t}\|^2\big] + z_5  \sum_{i=1}^n \bbE[\phi_\lambda( \vx_i^{t})], 
\end{align*}
where $z_1, z_2, z_3, z_4, z_5 \geq 0$ are determined later.

\subsection*{Proof of Theorem \ref{thm:sect3thm}}

% Then we can prove Theorem \ref{thm:sect3thm}.
\begin{proof}
Denote 
\begin{align*} 
   &~\Omega_0^t =   \bbE[\|\widehat\X^{t}-\X^{t}\|^2], \quad \Phi^t = \sum_{i=1}^n \bbE[\phi_\lambda( \vx_i^{t})], \\
   &~ \Omega^t = \left(\bbE\big[\|\X^{t}_\perp\|^2\big], \bbE\big[\|\X^{t}-\underline\X^{t}\|^2\big], \bbE\big[\|\Y^{t}_\perp\|^2\big], \bbE\big[\|\Y^{t}-\underline\Y^{t}\|^2\big], \Phi^t\right)^\top.
\end{align*}
Then Lemmas \ref{lem:X_consensus_comperror}, \ref{lem:Y_consensus_comperror} and \ref{lem:phi_one_step} imply
$\Omega^{t+1} \leq \A\Omega^t + \vb \Omega_0^t + \vc \sigma^2$ with
\begin{align*}
    &\A = \begin{pmatrix}
             \frac{3+\widehat\rho^2_x}{4}  &~ 2\alpha\gamma_x(1-\widehat\rho_x^2) &~  \frac{9}{4(1-\widehat\rho^2_x)} \eta^2  &~ 0 &~ 0\\
             \frac{21}{1-\alpha^2}  &~  \frac{3+\alpha^2}{4} &~\frac{21}{1-\alpha^2} \eta^2 &~ 0 &~ 0 \\
             \frac{150 L^2}{1-\widehat\rho^2_y}  &~ \frac{20\sqrt{3}  L^2}{1-\widehat\rho^2_y }\alpha\gamma_x  &~  \frac{3+\widehat\rho^2_y}{4}  &~  \frac{48}{1-\widehat\rho^2_y }\alpha^2\gamma_y^2 &~ 0\\ 
             \frac{180  L^2}{1-\alpha^2}  &~ \frac{24\sqrt{3}  L^2}{1-\alpha^2} \alpha\gamma_x  &~ \frac{104\gamma_y^2+96 L^2 \eta^2}{1-\alpha^2}  &~ \frac{3+\alpha^2}{4} &~ 0\\
             \frac{12}{\lambda}  &~  \frac{7\alpha\gamma_x}{\lambda} &~  \frac{12}{\lambda}\eta^2 &~ 0 &~ 1\\
        \end{pmatrix}, \\[0.2cm] 
&\vb = 
 \begin{pmatrix}
   4\alpha\gamma_x(1-\widehat\rho_x^2) \\
     \frac{11}{1-\alpha^2}  \\
     \frac{40 L^2  }{1-\widehat\rho^2_y}\\
     \frac{48 L^2}{1-\alpha^2} \\
   \frac{1}{\lambda}\left( \textstyle -\frac{\eta}{4\lambda}   +  23\alpha\gamma_x  \right) 
 \end{pmatrix}, \quad
\vc = 
 \begin{pmatrix}
     4\alpha\gamma_x \eta^2 (1-\widehat\rho_x^2) \\
        \frac{11 \eta^2 }{1-\alpha^2} \\    
        12n \\
         \frac{10n}{1-\alpha^2}\\
         \frac{5}{\lambda} \eta^2
 \end{pmatrix}.
\end{align*}
Then for any $\vz = (z_1, z_2  , z_3, z_4, z_5 )^\top\geq \0^\top$, it holds 
\begin{align*}
    \vz^\top \Omega^{t+1}  \leq  \vz^\top \Omega^t + (\vz^\top \A-\vz^\top) \Omega^t + \vz^\top\vb \Omega_0^t + \vz^\top\vc \sigma^2.
\end{align*}
Let $\gamma_x\leq \frac{\eta}{\alpha}$ and $\gamma_y\leq \frac{(1-\alpha^2) (1-\widehat\rho^2_x)(1-\widehat\rho^2_y)}{317}$.
Take $$z_1=\frac{52}{1-\widehat\rho^2_x}, z_2 = \frac{448}{1-\alpha^2} \eta , z_3 = \frac{521}{(1-\widehat\rho^2_x)^2(1-\widehat\rho^2_y)} \eta^2, z_4=(1-\alpha^2) \eta^2, z_5=\lambda.$$ We have
\begin{align*}
\vz^\top \A-\vz^\top \leq  &~
 \begin{pmatrix}
      \frac{21\cdot448}{ (1-\alpha^2)^2} \eta +   \frac{150\cdot521 L^2\eta^2}{(1-\widehat\rho^2_x)^2(1-\widehat\rho^2_y)^2} + 180  L^2\eta^2 - 1 \\[0.2cm]
      \frac{521\cdot20\sqrt{3} L^2\eta^3}{(1-\widehat\rho^2_x)^2(1-\widehat\rho^2_y)^2} + 24\sqrt{3} L^2\eta^3 -\eta   \\[0.2cm]
     \frac{448\cdot21\eta^3}{ (1-\alpha^2)^2} + 96 L^2 \eta^4   -
       \frac{\eta^2}{(1-\widehat\rho^2_x)^2}\\[0.1cm]
    0 \\[0.1cm]
    0
     \end{pmatrix}^\top,   \\
    \vz^\top\vb \leq &~ \textstyle -\frac{\eta}{4\lambda}   +  23\eta + 48  L^2 \eta^2 + \frac{521\cdot 40 \eta^2 L^2}{(1-\widehat\rho^2_x)^2 (1-\widehat\rho^2_y)^2}  +  \frac{448\cdot11\eta}{ (1-\alpha^2)^2} +  52\cdot4 \eta,\\
\vz^\top\vc \leq &~ \left(  \textstyle 52\cdot4\eta   + \frac{448\cdot 11\eta}{ (1-\alpha^2)^2} +  \frac{521\cdot12n}{(1-\widehat\rho^2_x)^2(1-\widehat\rho^2_y)}+ 10n + 5 \right)\eta^2.
\end{align*}

By $\eta\leq \frac{(1-\alpha^2)^2(1-\widehat\rho^2_x)^2(1-\widehat\rho^2_y)^2}{18830\max\{1, L\}}$ and 
$\lambda\leq \frac{ (1-\alpha^2)^2}{9 L+41280}$,
we have $\vz^\top \A-\vz^\top \leq (-\frac{1}{2}, 0, 0, 0, 0)^\top$, 
\begin{align*}
\vz^\top\vc \leq  \textstyle %\left( \frac{52\cdot4}{9417} + \frac{448\cdot 11}{9417} +  \frac{521\cdot12n}{(1-\widehat\rho^2_x)^2(1-\widehat\rho^2_y)^2}+ 10n + 5 \right)\eta^2\leq
\frac{(521\cdot12+10)n+6}{(1-\widehat\rho^2_x)^2(1-\widehat\rho^2_y)}\eta^2 =  \textstyle\frac{6262n+6}{(1-\widehat\rho^2_x)^2(1-\widehat\rho^2_y)}\eta^2
\end{align*}
and
\begin{align*}
    \vz^\top\vb ~ \leq &~    \textstyle \eta\Big(  -\frac{1}{4\lambda} + 23 + 48  L^2 \eta + \frac{521\cdot 40 \eta L^2}{(1-\widehat\rho^2_x)^2 (1-\widehat\rho^2_y)^2}  +  \frac{448\cdot11 }{ (1-\alpha^2)^2} + 52\cdot4 \Big) \nonumber \\
    \leq &~  \textstyle  -\frac{\eta}{8\lambda} + \eta\Big(  -\frac{1}{8\lambda} + \frac{ 9  L}{8 }  +  \frac{5160}{ (1-\alpha^2)^2}\Big) 
    % \nonumber \\
    % \leq &~  -\frac{\eta}{8\lambda}+ \eta\Big( \textstyle -\frac{1}{8\lambda}  + \frac{ \frac{9}{8} L+5160}{ (1-\alpha^2)^2} \Big) 
    \leq 
    -\frac{\eta}{8\lambda}.
\end{align*}
Hence we have
\begin{align}
    \vz^\top \Omega^{t+1} \leq  \textstyle  \vz^\top \Omega^{t} -\frac{\eta}{8\lambda} \Omega_0^t -\frac{1}{2}\bbE[\|\X^t_\perp\|^2] + \frac{6262n+6}{(1-\widehat\rho^2_x)^2(1-\widehat\rho^2_y)}\eta^2\sigma^2.\label{eq:l_fun_comp} 
\end{align}
Thus summing up \eqref{eq:l_fun_comp} for $t=0,1,\ldots,T-1$ gives 
\begin{align}
  \frac{1}{\lambda T}\sum_{t=0}^{T-1} \Omega_0^t +\frac{4}{\eta T}\sum_{t=0}^{T-1} \bbE[\|\X^t_\perp\|^2] \leq  \textstyle  \frac{8\left(\vz^\top \Omega^0 - \vz^\top \Omega^{T}\right)}{\eta T}  + \frac{8(6262n+6)}{(1-\widehat\rho^2_x)^2(1-\widehat\rho^2_y)} \eta\sigma^2. \label{eq:thm3_avg-Omega}
\end{align}
From $\vy_i^{-1}=\0$, $\underline\vy_i^{-1}=\0$, $\nabla F_i(\vx_i^{-1}$, $\xi_i^{-1})=\0$,  $\underline\vx_i^{0} =\0$, $\vx_i^0 = \vx^0, \forall\, i \in \hN$, we have  
\begin{gather}
    \|\Y^0_\perp\|^2  = \|\nabla \bF^0(\I-\J)\|^2\leq\|\nabla \bF^0\|^2, 
     \quad   \|\Y^{0}-\underline\Y^{0}\|^2 =  \|\nabla \bF^0-Q_\vy\big[\nabla \bF^0\big]\|^2 \leq \alpha^2 \|\nabla \bF^0\|^2, \label{eq:initial_thm3_1}\\
     \|\X^0_\perp\|^2=0, \quad \|\X^0-\underline\X^{0}\|^2=0, 
     \quad \Phi^0=n \phi_\lambda(\vx^0).
     \label{eq:initial_thm3_2}
\end{gather}
Note \eqref{eq:end_thm2} still holds here. 
With \eqref{eq:initial_thm3_1}, \eqref{eq:initial_thm3_2}, \eqref{eq:end_thm2}, and the nonnegativity of $ \bbE[\|\X^T_\perp\|^2]$, $\bbE[\|\X^{T}-\underline\X^{T}\|^2]$, $\bbE[\|\Y^T_\perp\|^2]$,  $\bbE[\|\Y^{T}-\underline\Y^{T}\|^2]$, we have
\begin{align}
\vz^\top \Omega^0 - \vz^\top \Omega^{T} \le  \textstyle
 \frac{521}{(1-\widehat\rho^2_x)^2(1-\widehat\rho^2_y)} \eta^2 \bbE[\|\nabla \bF^0\|^2] + \eta^2  \bbE[\|\nabla \bF^0\|^2] + \lambda n \phi_\lambda(\vx^0) -\lambda n  \phi_\lambda^*. \label{eq:them3_Omega0_OmegaT}
\end{align}
where we have used $\alpha^2\leq 1$ from Assumption \ref{assu:compressor}.

By the convexity of the frobenius norm and  \eqref{eq:them3_Omega0_OmegaT}, we obtain from \eqref{eq:thm3_avg-Omega} that 
\begin{align}
    &~ \frac{1}{n\lambda^2}   \bbE\big[\|\widehat\X^{\tau}-\X^{\tau}\|^2\big] +\frac{4}{n \lambda \eta} \bbE[\|\X^\tau_\perp\|^2]
   \leq  \frac{1}{n\lambda^2} \frac{1}{T}\sum_{t=0}^{T-1}  \bbE\big[\|\widehat\X^{t}-\X^{t}\|^2\big] +\frac{4}{n \lambda \eta T}\sum_{t=0}^{T-1} \bbE[\|\X^t_\perp\|^2]  \nonumber \\
   \leq  &  \textstyle \frac{8\left( \phi_\lambda(\vx^0) - \phi_\lambda^*\right)}{ \eta T}  
     +\frac{50096n+48}{(1-\widehat\rho^2_x)^2(1-\widehat\rho^2_y)} \frac{\eta}{n\lambda}\sigma^2
    \textstyle  +  \frac{8\cdot521 \eta }{n\lambda T (1-\widehat\rho^2_x)^2(1-\widehat\rho^2_y)}  \bbE\big[ \|\nabla \bF^0\|^2\big] + 
      \frac{8\eta}{n\lambda  T} \bbE\big[ \|\nabla \bF^0\|^2\big]  \nonumber \\
   \leq &~\textstyle \frac{8\left(\phi_\lambda(\vx^0) - \phi_\lambda^*\right)}{\eta T}  
     +\frac{(50096n+48)\eta \sigma^2}{n\lambda(1-\widehat\rho^2_x)^2(1-\widehat\rho^2_y)} + \textstyle \frac{4176 \eta \bbE\left[ \|\nabla \bF^0\|^2\right] }{n\lambda T (1-\widehat\rho^2_x)^2(1-\widehat\rho^2_y)}. \label{eq:them_CDProxSGT0}
\end{align}
With $\|\nabla \phi_\lambda (\vx_i^\tau)\|^2 = \frac{\|\vx_i^\tau-\widehat\vx_i^\tau\|^2}{\lambda^{2}}$ from Lemma \ref{lem:xhat_x}, we complete the proof.
%get \eqref{eq:them_CDProxSGT} from \eqref{eq:them_CDProxSGT0}.
\end{proof}

\section{Additional Details on FixupResNet20}

FixupResNet20 \cite{zhang2019fixup}  is amended from the popular ResNet20 \cite{he2016deep} by deleting the BatchNorm layers \cite{ioffe2015batch}. The BatchNorm layers use the mean and variance of some hidden layers based on the data inputted into the models. In our experiment, the data on nodes are heterogeneous. If the models include BatchNorm layers, even all nodes have the same model parameters after training, their testing performance on the whole data would be different for different nodes because the mean and variance of the hidden layers are produced on the heterogeneous data. Thus we use FixupResNet20 instead of ResNet20. 

%In AllReduce, model parameters (or equivalently the stochastic gradients) are averaged among all nodes in each iteration. We see AllReduce as a baseline.

% {\color{blue} a more detailed discussion of these relevant work （compression） is needed. you can put them in the appnedix.}

% \PYB{We mentioned good communication overhead in Intro; but there are no experiments. Can we add the results (in Appendix) and put some discussion in the main paper?} 
\end{document}